\documentclass[12pt,a4paper,reqno]{amsart} 
\usepackage[left=1.9cm,right=1.9cm,top=2.8cm,bottom=2.8cm,a4paper]{geometry}
\usepackage{amsmath,amscd,amssymb,latexsym}
\usepackage{graphicx}
\usepackage{enumerate}
\usepackage{stackengine}
\stackMath   
\usepackage{placeins}
\usepackage{appendix}
\usepackage{mathrsfs} 
\usepackage{accents}

\makeatletter \@namedef{subjclassname@2020}{
  \textup{2020} Mathematics Subject Classification}
\makeatother

\usepackage{color}
\definecolor{gray}{rgb}{0.65,0.65,0.65}
\definecolor{ultramarine}{rgb}{0.07, 0.04, 0.56}
\usepackage[pagebackref=false,colorlinks=true,linkcolor=ultramarine,citecolor=ultramarine,urlcolor=cyan]{hyperref}

\begin{document}
\setlength{\unitlength}{1mm}

\newtheorem{thm}{Theorem}[section]
\newtheorem{lem}[thm]{Lemma}
\newtheorem{prop}[thm]{Proposition}
\newtheorem{defn}[thm]{Definition}
\newtheorem{cor}[thm]{Corollary}
\newtheorem{rmk}[thm]{Remark}
\newtheorem{exmp}[thm]{Example}
\newtheorem{fact}[thm]{Fact}
\newtheorem{conj}[thm]{Conjecture}

\newcommand{\QQ}{\mathbb{Q}}
\newcommand{\ZZ}{\mathbb{Z}}
\newcommand{\RR}{\mathbb{R}}
\newcommand{\NN}{\mathbb{N}}
\newcommand{\XX}{\mathbb{X}}
\newcommand{\FF}{\mathbb{F}}
\newcommand{\PP}{\mathbb{P}}

\newcommand{\cO}{\mathcal{O}}
\newcommand{\cN}{\mathcal{N}}
\newcommand{\cA}{\mathcal{A}}
\newcommand{\cS}{\mathcal{S}}

\newcommand{\fA}{\mathfrak{A}}
\newcommand{\fa}{\mathfrak{a}}
\newcommand{\fb}{\mathfrak{b}}
\newcommand{\fq}{\mathfrak{q}}
\newcommand{\fN}{\mathfrak{N}} 
\newcommand{\fM}{\mathfrak{M}}
\newcommand{\fP}{\mathfrak{P}}

\newcommand{\sgn}{\mathrm{sgn\,}}
\newcommand{\chisgn}{\chi_{\rm sgn}}

\newcommand{\bA}{\mathbf{A}}
\newcommand{\sA}{\mathscr{A}}
\newcommand{\sS}{\mathscr{S}}
\newcommand{\aA}{\mathbb{A}}

\title[Genus character $L$-functions of orders in quadratic function fields]
{Genus character $L$-functions of orders in quadratic function fields}

\author{Jigu Kim and Yoshinori Mizuno}

\address{Department of Mathematics, Jeonbuk National University, Jeonju, 54896, Republic of Korea}

\email{jigu.kim.math@gmail.com}

\address{Department of Mathematics, Nagoya Institute of Technology, Nagoya, 466-8555, Japan}

\email{mizuno.yoshinori@nitech.ac.jp}

\keywords{Genus character $L$-functions, Quadratic function fields, Class numbers, Continued fractions.}
\subjclass[2020]{Primary 11R59, 11R29, Secondary 11A55, 11R11}

\maketitle

\noindent{\bf Abstract.} 
We define genus character $L$-functions of orders in quadratic function fields of odd characteristic. We express each as a product of two quadratic $L$-functions and explicit local factors at primes dividing the conductor, with a possible shift of the variable in one of the two quadratic $L$-functions. This product formula yields  meromorphic continuation and functional equations. 
In the real case, we evaluate special values in terms of continued fractions and derive formulas for the product of class numbers of two imaginary quadratic function fields. These formulas are function field analogues of the Kaneko--Mizuno--Zagier formula. 
\medskip

\section{Introduction}\label{intro-sec}
Genus characters on narrow class groups of maximal orders in quadratic number fields can be defined in terms of Kronecker characters and ideal norms, and their associated $L$-functions factor as products of two Dirichlet $L$-functions (cf. \cite[\S 12]{Zag81}).
For arbitrary orders, the second named author and Kaneko \cite{KM20} defined general genus characters via primitive binary quadratic forms and established a product formula with explicit local factors for the associated $L$-functions. 
Ibukiyama \cite[Theorem 4.3]{Ibu26} gave an independent proof of this product formula using an adelic description of genus characters. 
We first recall this formula, together with the resulting meromorphic continuation and functional equation (cf. \cite[Theorem 1 and the discussion on p. 71]{KM20}). 
\begin{thm}\label{KM-L-function-formula}
Let $\Delta=d_1d_2f_0^2$ be a nonsquare integer, where $d_1$ and $d_2$ are distinct fundamental discriminants (with $1$ allowed) and $f_0\in\NN$. 
Let $\chi_{d_1,d_2}^{(\Delta)}$ be the genus character associated to the quadratic order $\cO_\Delta$ of discriminant $\Delta$, and let $m_p$ be the integer such that $p^{m_p}\parallel f_0$. Then for $\mathrm{Re}(s)>1$, the genus character $L$-function satisfies 
\begin{align*}
&L(s,\chi_{d_1,d_2}^{(\Delta)})
:=\sum_{\cO_\Delta\text{-invertible ideal }\fa\subset\cO_\Delta}
\frac{\chi_{d_1,d_2}^{(\Delta)}(\fa)}{\fN_\Delta(\fa)^s}\\
=&L(s,\chi_{d_1})L(s,\chi_{d_2})\\
&\times\prod_{\substack{p\mid f_0\\p\mathrm{:prime}}}
\frac{(1-\chi_{d_1}(p)p^{-s})(1-\chi_{d_2}(p)p^{-s})
-p^{m_p-1-2m_ps}(p^{1-s}-\chi_{d_1}(p))(p^{1-s}-\chi_{d_2}(p))}{1-p^{1-2s}}.
\end{align*}
Here $\fN_\Delta(\fa)=(\cO_\Delta:\fa)$ is the ideal norm, and $L(s,\chi_{d})$ is the Dirichlet $L$-function of the Kronecker character $\chi_{d}(n)=\big(\frac{d}{n}\big)$.
Furthermore, put
$$\Lambda(s,\chi_{d_1,d_2}^{(\Delta)})
:=f_0^sG_{d_1}(s)G_{d_2}(s)L(s,\chi_{d_1,d_2}^{(\Delta)}),$$
where 
$$G_d(s):=\left(\frac{|d|}{\pi}\right)^{s/2}
\Gamma\left(\frac{s+\delta_d}{2}\right),\qquad
\delta_d=\left\{
\begin{array}{ll}
0 &\text{if }d>0,\\
1 &\text{if }d<0.\end{array}\right.$$
Then $L(s,\chi_{d_1,d_2}^{(\Delta)})$ admits meromorphic continuation to the whole complex plane and satisfies the functional equation
$$\Lambda(s,\chi_{d_1,d_2}^{(\Delta)})=\Lambda(1-s,\chi_{d_1,d_2}^{(\Delta)}).$$
\end{thm}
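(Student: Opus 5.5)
The plan is to prove the product formula by comparing Euler products prime by prime, then read off the analytic statements. The first step is structural: the monoid of $\cO_\Delta$-invertible ideals factors uniquely into its localizations. Every invertible ideal $\fa$ is the product of its $p$-primary components $\fa_p=\fa\cO_{\Delta,p}\cap\cO_\Delta$, and both $\fN_\Delta$ and the genus character are multiplicative over this decomposition. For multiplicativity of the character I would use the adelic description (as in Ibukiyama), writing $\chi^{(\Delta)}_{d_1,d_2}$ as a product of local characters given by Kronecker symbols at the local norms. Consequently
$$L(s,\chi_{d_1,d_2}^{(\Delta)})=\prod_p L_p(s),\qquad L_p(s)=\sum_{\fa\ p\text{-primary invertible}}\chi(\fa)\,p^{-s\,\mathrm{ord}_p\fN(\fa)}.$$

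For $p\nmid f_0$ the localization is maximal, so the usual genus-character computation applies. Split, inert and ramified primes of $\QQ(\sqrt{\Delta})$ are handled via $\chi_{d_1}\chi_{d_2}=\chi_{d_1d_2}$, and on ideals of norm $p$ the character takes the value $\chi_{d_1}(p)$ (or $\chi_{d_2}(p)$ when $p\mid d_1$). This gives $L_p(s)=(1-\chi_{d_1}(p)p^{-s})^{-1}(1-\chi_{d_2}(p)p^{-s})^{-1}$.

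The main work is at $p\mid f_0$, with $m=m_p$. There I would classify the invertible ideals of $\cO_{\Delta,p}=\ZZ_p+p^m\cO_{K,p}$. Each is principal, so they are in bijection with $K_p^\times$-elements of $\cO_{\Delta,p}$ modulo $\cO_{\Delta,p}^\times$. I would count, for each $k$, the classes of local norm $p^k$ together with their character values. One way to organize this is via the sublattices $p^j\cO_{K,p}$-type filtration: an invertible $\cO_\Delta$-ideal of norm $p^k$ corresponds to $\alpha\in\cO_{\Delta,p}$ with $v_p(N\alpha)=k$. The character is $\chi_{d_1}$ evaluated on the prime-to-$d_1$ part of $N\alpha$ (equivalently $\chi_{d_2}$). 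The counts are indices $[\cO_{K,p}^\times:\cO_{\Delta,p}^\times]$-weighted and amount to $p^{j}(1-\chi_{d_1d_2}(p)/p)$. Summing the resulting geometric series in $X=p^{-s}$ should yield
$$L_p(s)=\frac{(1-\chi_{d_1}(p)X)(1-\chi_{d_2}(p)X)-p^{m-1}X^{2m}(pX-\chi_{d_1}(p))(pX-\chi_{d_2}(p))}{(1-pX^2)(1-\chi_{d_1}(p)X)(1-\chi_{d_2}(p)X)}.$$
As a sanity check, at $m=0$ this collapses to the maximal case. I expect this local count to be the main obstacle, particularly keeping track of the character values when $p\mid d_1d_2$ or $p=2$. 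I would first verify the case $m=1$ by direct enumeration and then induct on $m$ by relating ideals of $\cO_{p^m}$ and $\cO_{p^{m-1}}$.

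Finally, the meromorphic continuation follows from that of $L(s,\chi_{d_i})$, since the local factors are entire rational functions of $p^{-s}$ apart from the poles of $1-p^{1-2s}$. For the functional equation, I would multiply the two completed Dirichlet functional equations $G_{d}(s)L(s,\chi_d)=G_d(1-s)L(1-s,\chi_d)$. It then remains to check that $p^{ms}\cdot(\text{local factor})(s)$ is invariant under $s\mapsto1-s$. Writing the local factor as $N(X)/(1-pX^2)$, the substitution $s\to1-s$ sends $X\mapsto 1/(pX)$. Multiplying numerator and denominator by $p^{2m}X^{2m}\cdot pX^2/\ldots$ and using $\chi(p)^2=1$ for $p\nmid d$ (with the $\chi(p)=0$ case checked separately), the two terms of the numerator swap. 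This is a routine algebraic verification.
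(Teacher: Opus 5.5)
There is a genuine gap: you tacitly take the conductor of $\cO_\Delta$ to be $f_0$, and this holds only when $\gcd(d_1,d_2)=1$. The overall architecture is sound: Euler product over $p$-primary components, a local count, then the symmetry $\epsilon_p(1-s)=p^{m_p(2s-1)}\epsilon_p(s)$ combined with the Dirichlet functional equations. It is the same architecture the paper uses for its function-field analogue in Section~\ref{pf-main-thm-1-a}; the number-field statement itself is only quoted from \cite{KM20} there.

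In general $d_1d_2=d_Kg^2$, where $d_K$ is the discriminant of $K=\QQ(\sqrt{\Delta})$, and the conductor is $gf_0$. For an odd prime $p\mid\gcd(d_1,d_2)$ one has $p\nmid d_K$. So $p$ is \emph{unramified} in $K$ although $\chi_{d_1d_2}(p)=0$, and the local conductor exponent is $m_p+1$ (at $p=2$ the shift can be $1$ or $2$). This has several consequences:
\begin{itemize}
\item Your claim ``for $p\nmid f_0$ the localization is maximal'' is false at such $p$.
\item The splitting dictionary via $\chi_{d_1}\chi_{d_2}=\chi_{d_1d_2}$ does not describe $K$ at such $p$.
\item At $p\mid f_0$, your identification $\cO_{\Delta,p}=\ZZ_p+p^{m_p}\cO_{K,p}$ is wrong, and so is your count $p^j(1-\chi_{d_1d_2}(p)/p)$; the unit index involves $\chi_{d_K}(p)$.
\end{itemize}
This is not a side case. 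Allowing non-coprime $d_1,d_2$ is exactly what the theorem adds to Zagier's setting. Accordingly, the paper's analogue splits primes by whether they divide the true conductor $F$, with $v_P(F)=v_P(F_0)+1$ when $P\mid D_1$ and $P\mid D_2$.

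What is missing is the computation at $p\mid\gcd(d_1,d_2)$. There $\chi_{d_1}(p)=\chi_{d_2}(p)=0$, so the required local factor is $\sum_{r=0}^{m_p}p^rX^{2r}$, which equals $1$ when $p\nmid f_0$. Put $n=m_p+1$. There are invertible ideals of norm $p^t$ for every even $t$. On those with $t\ge 2n$ the genus character is \emph{not} a function of the norm, so your description ``$\chi_{d_1}$ of the norm, equivalently $\chi_{d_2}$'' gives nothing. You have to prove that their values cancel.

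In the paper's analogue (proof of Lemma~\ref{theta-0-1}, parts (iii)--(iv)) this goes as follows, with $E=\Delta/p^{2n}$:
\begin{itemize}
\item The regular ideals of norm $p^{2n}$ are indexed by $M \bmod p^n$ with $p\nmid M^2-E$. They carry the values $\bigl(\frac{M^2-E}{p}\bigr)$, which sum to $-p^{n-1}$ by the character-sum Lemma~\ref{character-sum}.
\item This cancels the $p^{n-1}$ ideals $p^{n-m}\mathfrak{b}$ of the same norm, where $\mathfrak{b}$ is regular of norm $p^{2m}$ with $0\le m<n$; all of these have character value $1$.
\item Regular ideals of norm $p^a$ with $a>2n$ exist only when $E$ is a square mod $p$, and then their values sum to $0$ by orthogonality in $S$.
\end{itemize}
Hence $\eta_1(p^t)=0$ for $t\ge 2n$. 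Without this cancellation, and its $2$-adic counterpart, your local factors at primes dividing $\gcd(d_1,d_2)$ are not established. Separately, even in the coprime case the central local count at $p\mid f_0$ is only asserted (``should yield''), not carried out.
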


Zagier \cite{Zag75,Zag81} used the theory of continued fractions to obtain a formula for the product of class numbers of two imaginary quadratic number fields with relatively prime fundamental discriminants.
Combining the product formula in Theorem \ref{KM-L-function-formula} with a continued fraction expression for the special value at $s=0$, the second named author and Kaneko generalized Zagier's formula to the case of two distinct negative fundamental discriminants with a general conductor (cf. \cite[Theorem 3]{KM20}). 
\begin{thm}\label{KMZ-formula}
With the notation of Theorem \ref{KM-L-function-formula}, assume that $d_1$ and $d_2$ are negative. Let $\XX_\Delta^0$ be the set of all reduced quadratic irrationals of discriminant $\Delta$. 
For $\alpha\in\XX_\Delta^0$, the value $\chi_{d_1,d_2}^{(\Delta)}(\alpha)$ is understood via the corresponding ideal, and $\lfloor\alpha\rfloor$ denotes the greatest integer less than or equal to $\alpha$. Then
\begin{align*}
&24\frac{h(d_1)h(d_2)}{\omega(d_1)\omega(d_2)}
\prod_{\substack{p\mid f_0\\p\mathrm{:prime}}}
\frac{(1-\chi_{d_1}(p))(1-\chi_{d_2}(p))-p^{m_p-1}(p-\chi_{d_1}(p))(p-\chi_{d_2}(p))}{1-p}\\
&\quad=\sum_{\alpha\in\XX_\Delta^0}
\chi_{d_1,d_2}^{(\Delta)}(\alpha)\lfloor\alpha\rfloor.
\end{align*}
Here $h(d)$ and $\omega(d)$ denote the class number and the number of units in the ring of integers of $\QQ(\sqrt{d})$, respectively. 
\end{thm}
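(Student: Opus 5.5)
The plan is to evaluate $L(0,\chi_{d_1,d_2}^{(\Delta)})$ in two independent ways and compare them. First I would use the product formula of Theorem \ref{KM-L-function-formula}. Setting $s=0$, the local factor at each $p\mid f_0$ becomes
\[
\frac{(1-\chi_{d_1}(p))(1-\chi_{d_2}(p))-p^{m_p-1}(p-\chi_{d_1}(p))(p-\chi_{d_2}(p))}{1-p},
\]
which is exactly the product appearing in the statement. For $d<0$ the classical value is $L(0,\chi_d)=2h(d)/\omega(d)$. This is where the hypothesis matters: since $d_1,d_2<0$, both factors are finite and nonzero at $s=0$, and there is no trivial zero coming from $\Gamma(s/2)$. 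Note also that $\Delta=d_1d_2f_0^2>0$, so the order is real. Together these give
\[
L(0,\chi_{d_1,d_2}^{(\Delta)})=\frac{4h(d_1)h(d_2)}{\omega(d_1)\omega(d_2)}\prod_{p\mid f_0}(\cdots).
\]
The theorem then reduces to the identity $6L(0,\chi_{d_1,d_2}^{(\Delta)})=\sum_{\alpha\in\XX_\Delta^0}\chi_{d_1,d_2}^{(\Delta)}(\alpha)\lfloor\alpha\rfloor$.

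For the second evaluation I would decompose the $L$-function over narrow ideal classes. The genus character is a class function, so
\[
L(s,\chi_{d_1,d_2}^{(\Delta)})=\sum_{C}\chi_{d_1,d_2}^{(\Delta)}(C)\,\zeta(s,C),
\]
where $C$ runs over the narrow classes of invertible ideals (equivalently, primitive forms of discriminant $\Delta$). I would then invoke Zagier's continued-fraction evaluation of the partial zeta value at $s=0$. Each narrow class $C$ corresponds to a cycle of reduced quadratic irrationals $\alpha_1,\dots,\alpha_\ell$, with $\alpha_j=b_j-1/\alpha_{j+1}$ (minus continued fractions). Zagier's formula reads
\[
\zeta(0,C)=\frac{1}{12}\sum_{j}\Bigl(b_j-3\Bigr)+\text{(correction terms)},
\]
in the precise form $\zeta(0,C)=\tfrac1{12}\sum_j(\alpha_j+\alpha_j'-3)$. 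Zagier derives this for the maximal order, but his method (cone decomposition and Shintani-type evaluation) applies verbatim to primitive forms of arbitrary discriminant. Since $b_j=\lceil\alpha_j\rceil=\lfloor\alpha_j\rfloor+1$ for irrational $\alpha_j$, we get
\[
\sum_j(b_j-3)=\sum_j\lfloor\alpha_j\rfloor-2\ell .
\]

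Summing these against the character, I expect the terms not involving $\lfloor\alpha\rfloor$ to cancel. The mechanism is the behaviour of the character under $\alpha\mapsto -\alpha'$, or under composition with the class of the different. Because $d_1,d_2<0$, the genus character is odd, meaning $\chi_{d_1,d_2}^{(\Delta)}(\theta C)=-\chi_{d_1,d_2}^{(\Delta)}(C)$ for the class $\theta$ of principal ideals with negative-norm generator. Using $\zeta(s,C)+\zeta(s,\theta C)$, the $\ell$-terms and the $\alpha_j'$-terms cancel in pairs. What remains is
\[
L(0,\chi)=\tfrac16\sum_\alpha\chi(\alpha)\lfloor\alpha\rfloor,
\]
possibly after grouping $C$ with $\theta C$, which is what produces the factor $1/6$ rather than $1/12$.

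The main obstacle is this last step. The required form of the partial zeta value is one in which the boundary terms such as $\ell$ and $\sum_j\alpha_j'$ genuinely drop out under an odd character, and I need to check that the classes $C$ and $\theta C$ have cycles related by $\alpha\mapsto 1/\alpha'$ or a similar involution. Concretely, I would first try to verify $\chi(\theta)=\chi_{d_1}(-1)=-1$ and then track how reduced cycles transform under multiplication by $\theta$. Once that works, combining with the first evaluation gives $24h(d_1)h(d_2)/(\omega(d_1)\omega(d_2))\prod_{p\mid f_0}(\cdots)=6L(0,\chi_{d_1,d_2}^{(\Delta)})=\sum_{\alpha}\chi_{d_1,d_2}^{(\Delta)}(\alpha)\lfloor\alpha\rfloor$.
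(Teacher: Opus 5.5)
Your first step is correct and follows the route the paper describes (and carries out for function fields in Section~\ref{sec-last-proofs}). At $s=0$ the product formula and $L(0,\chi_d)=2h(d)/\omega(d)$ reduce the theorem to $6L(0,\chi)=\sum_{\alpha\in\XX_\Delta^0}\chi(\alpha)\lfloor\alpha\rfloor$, where $\chi=\chi_{d_1,d_2}^{(\Delta)}$. The gap is in the second step, and the mechanism you propose there is wrong, not just unverified.

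\textbf{The formula you use is not Zagier's.} The ``precise form'' $\zeta(0,C)=\frac1{12}\sum_j(\alpha_j+\alpha_j'-3)$ is not his formula. His formula is exactly $\zeta(0,C)=\frac1{12}\sum_j(b_j-3)$, with no correction terms. One does have $\sum_j b_j=\sum_j(\alpha_j+\alpha_{j+1}^{-1})$, but $\sum_j(\alpha_j+\alpha_j')\neq\sum_j b_j$ in general. For example, for the cycle $((2,3,4))$ of discriminant $221$ the traces are $\frac{21}{11},\frac{23}{7},\frac{19}{5}$, which do not sum to $9$.

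\textbf{The sum is over the wrong set.} The $\alpha_j$ in Zagier's cycles are reduced for the \emph{minus} continued fraction ($\alpha>1>\alpha'>0$). The set $\XX_\Delta^0$ consists of irrationals reduced for the ordinary continued fraction ($\alpha>1$, $-1<\alpha'<0$). So $\sum_j\lfloor\alpha_j\rfloor$ over minus cycles is not the sum in the theorem.

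\textbf{The cancellation runs the other way.} Let $(a_1,\dots,a_{2m})$ be the ordinary period attached to $C$. Then the minus cycle of $C$ is $((a_1+2,2^{a_2-1},a_3+2,2^{a_4-1},\dots))$, where $2^{n}$ means $n$ consecutive entries equal to $2$. The minus cycle of $\theta C$ comes in the same way from the shifted period $(a_2,\dots,a_{2m},a_1)$. Consequently:
\begin{itemize}
\item $\sum_j\lfloor\alpha_j\rfloor=\sum_i a_i$ for both $C$ and $\theta C$;
\item $\ell(C)=\sum_{i\ \mathrm{even}}a_i$, while $\ell(\theta C)=\sum_{i\ \mathrm{odd}}a_i$.
\end{itemize}
So under $\chi(\theta C)=-\chi(C)$ the $\lfloor\alpha_j\rfloor$-terms cancel and the $\ell$-terms survive. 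The result is $\zeta(0,C)=-\zeta(0,\theta C)=\frac1{12}\sum_{i=1}^{2m}(-1)^{i-1}a_i$.

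\textbf{What you need} is therefore:
\begin{enumerate}
\item[(i)] the minus-to-ordinary conversion just described;
\item[(ii)] the identity $\zeta(0,\theta C)=-\zeta(0,C)$, equivalently the vanishing of the wide partial zeta function at $s=0$. This gives $\chi(C)\zeta(0,C)+\chi(\theta C)\zeta(0,\theta C)=2\chi(C)\zeta(0,C)$, which is the real source of the factor $\frac16$;
\item[(iii)] the fact that the ordinary step $\alpha\mapsto 1/(\alpha-\lfloor\alpha\rfloor)$ has determinant $-1$. Hence $\chi$ alternates along the ordinary cycle, which has even length since $C\neq\theta C$. This turns $\chi(C)\sum_i(-1)^{i-1}a_i$ into $\sum\chi(\alpha)\lfloor\alpha\rfloor$ over the reduced elements of the wide class.
\end{enumerate}

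This is exactly the structure of the paper's function-field argument. There the narrow difference $Z_{\chisgn}(s,\fa P^+(\Delta))-Z_{\chisgn}(s,u\fa P^+(\Delta))$ is expressed directly through the standard continued fraction (Proposition~\ref{Hayes-special-values}), and the sign alternation is Lemma~\ref{chi-gamma-formula}.
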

We remark that the last summation in Theorem \ref{KMZ-formula} can be expressed 
as alternating sums of all elements in periods of continued fractions of representatives of equivalence classes of $\XX_\Delta^0$ (cf. \cite[p. 88]{KM20}).

We consider a quadratic function field of the form $\FF_q(T,\sqrt{D(T)})$, 
where $q$ is a power of an odd prime and $D(T)$ is a square-free polynomial with coefficients in $\FF_q$.  We call $D(T)$ the {\it fundamental discriminant} of the field. 
Hayes \cite{Hay87} proved function field analogues of Theorem \ref{KMZ-formula} for the cases where one of the two fundamental discriminants is given by $g$, $T$ or $gT$, and $g$ is a generator of $\FF_q^\times$; Gonz\'{a}lez \cite{Gon92} obtained partial results by another method. 
In the case of relatively prime fundamental discriminants of two quadratic function fields with $q=p$, 
Richter \cite{Rit00} introduced genus characters and extended these formulas to a generality comparable to Zagier’s.
We refer to \cite{CY08} for similar results for quadratic function fields of even characteristic.

In this paper, we prove analogues of Theorems \ref{KM-L-function-formula} and \ref{KMZ-formula}, which are stated in Theorems \ref{KM-thm-1-Rit-12.3.2} and \ref{KM-thm-3-Rit-15.1.2}, respectively.
Let $\FF_q$ be the finite field of order $q$ with characteristic $p\neq2$, and let $g$ be a fixed generator of $\FF_q^\times$. 
Let $k=\FF_q(T)$ and $\cA=\FF_q[T]$, where $T$ is an indeterminate. 
We denote by $\cA^{(1)}$ the subset of $\cA$ consisting of monic elements.
By a prime in $\cA$ we always mean a nonconstant monic irreducible polynomial.
For the relation $A\mid B$, we always assume that $A\in\cA^{(1)}$, $B\in\cA$ and $A$ divides $B$. 
Let $k_\infty$ be the completion of $k$ at $\infty$. We denote the polynomial part of $Z=\sum_{i=-\infty}^{d}a_i T^i\in k_\infty$ by $\lfloor Z\rfloor=\sum_{i=0}^{d}a_i T^i$. 
For $Z=\sum_{i=-\infty}^{d}a_i T^i\in k_\infty^\times$ with $a_d\neq0$, we put $\deg{Z}=d$, $\|Z\|=q^{\deg{Z}}$, and $\sgn{Z}=a_d$. 
We also put $\deg{0}=-\infty$ and $\|0\|=\sgn{0}=0$. 
When $\Delta$ is a monic polynomial of even degree, we always choose $\sqrt{\Delta}\in k_\infty$ with $\sgn(\sqrt{\Delta})=1$ and regard $k(\sqrt{\Delta})$ as a subfield of $k_\infty$.

Let $\chisgn:k_\infty \to \{0,\pm1\}$ be the quadratic sign character at infinity, that is, the quadratic character of $\FF_q$ evaluated on the leading coefficient:
$$\chisgn(Z)=\left\{\begin{array}{rl}
1 &\text{if }\,\sgn{Z}\in {\FF_q^\times}^{2},\\ 
-1 &\text{if }\,\sgn{Z}\not\in\FF_q^{2},\\
0 &\text{if }\,Z=0.
\end{array}\right.$$
We use the natural action of $\mathrm{GL}_2(\cA)$ on the projective line 
$\PP^1=k_\infty\cup\{P_\infty\}$ and standard continued fractions of real quadratic irrational functions along with their complete quotients, partial quotients, and quasi-periods. These conventions are collected in Appendix \ref{app:continued-fractions}.

The ideal-theoretic notions of quadratic orders and Jacobi symbols are recalled in Section \ref{preliminary-sec}. 
For $\Delta\in\cA\setminus \cA^2$, let $\cO_\Delta=\cA+\cA\sqrt{\Delta}$ be the order in $k(\sqrt{\Delta})$, and let $C(\Delta)$ and $C^+(\Delta)$ denote its wide and narrow class groups. We write $h(\Delta):=|C(\Delta)|$ and $h^+(\Delta):=|C^+(\Delta)|$. 
The quadratic function field $k(\sqrt{\Delta})$ is said to be real if $\deg{\Delta}$ is even and $\chisgn(\Delta)=1$, and imaginary otherwise. 
For $M\in\cA\setminus\{0\}$ and $N\in\cA$, define
$$J_M(N):=\left[\frac{N}{M}\right],$$
where $\left[\frac{\cdot}{\cdot}\right]$ is the Jacobi symbol.
We call $J_M$ the {\it Jacobi character} modulo $M$.\footnote{We use $\left[\frac{\cdot}{M}\right]$ rather than $\left[\frac{M}{\cdot}\right]$, since the former depends only on the residue class modulo $M$, whereas the latter need not.} 
We put
$$M^{(1)}:=\sgn(M)^{-1}M\in\cA^{(1)}$$
and
$$L_{M}(s):=\sum_{F\in\cA^{(1)}}\bigg[\frac{M}{F}\bigg] \|F\|^{-s}.$$
We write
\begin{equation}\label{L-JD-quad-L}
L(s,J_M):=\sum_{F\in\cA^{(1)}}J_M(F)\|F\|^{-s}=L_{(-1)^{\deg M^{(1)}} M^{(1)}}(s),
\end{equation}
where the last equality follows from the quadratic reciprocity law.

Let $D_1,D_2\in\cA$ be two square-free polynomials such that $D_1D_2\notin\cA^2$. For $F_0\in\cA^{(1)}$, let 
$$\Delta=D_1D_2F_0^2\in\cA\setminus\cA^2$$
and $\kappa_\Delta\in\{0,1\}$ such that
\begin{equation}\label{kappa-delta-intro}
(-1)^{\kappa_\Delta}=\chisgn(\Delta)\chisgn(-1)^{\deg\Delta}\in\{\pm1\}.
\end{equation}
We note that $\kappa_\Delta=0$ if $\Delta$ is monic of even degree. 
For a prime $P$, put
\begin{equation}\label{alpha-beta-intro}
\alpha_P:=J_{D_1}(P),\qquad
\beta_P:= (-1)^{\kappa_\Delta \deg P}J_{D_2}(P).
\end{equation}
For every prime $P\nmid F_0$, the quadratic reciprocity law gives
\begin{equation}\label{splitting-alpha-beta}
\left[\frac{\Delta}{P}\right]=\alpha_P\beta_P.
\end{equation}

Let $\chi_{D_1,D_2}^{(\Delta)}:C^+(\Delta)\to\{\pm1\}$ be the {\it genus character}, which is defined in terms of primitive binary quadratic forms and their correspondence with narrow ideal classes in Appendix \ref{app:genus-characters}.
Then we define the {\it genus character $L$-function} for $\mathrm{Re}(s)>1$ by 
\begin{equation}\label{genus-char-L-def}
L(s,\chi_{D_1,D_2}^{(\Delta)}):=\sum_{\cO_\Delta\text{-invertible ideal }\fa\subset\cO_\Delta} 
\frac{\chi_{D_1,D_2}^{(\Delta)}(\fa)}{\|\fN_\Delta(\fa)\|^s},
\end{equation}
where both an $\cO_\Delta$-invertible ideal of $\cO_\Delta$ and the ideal norm
$\fN_\Delta$ are defined in Section \ref{preliminary-sec}. 

Our main results are as follows.
The genus character $L$-function can be represented by the product of  two quadratic $L$-functions and certain local factors related to $F_0$.
\begin{thm}\label{KM-thm-1-Rit-12.3.2}\label{thm-f-eq}
Let $\Delta=D_1D_2F_0^2$ be as above. For any prime $P\in\cA$, let 
$m_P$ be the integer such that $P^{m_P}\parallel F_0$, and let $\alpha_P,\beta_P$ be as in \eqref{alpha-beta-intro}. Let $\kappa_\Delta$ be as in \eqref{kappa-delta-intro} and $\tau={\pi i}/{\log q}$. Define
\begin{equation*}
\epsilon_P(s,\chi_{D_1,D_2}^{(\Delta)}) 
=\frac{\big(1-\alpha_P\|P\|^{-s}\big) \big(1-\beta_P\|P\|^{-s}\big)-\|P\|^{m_P-1-2m_Ps}
\big(\|P\|^{1-s}-\alpha_P\big)\big(\|P\|^{1-s}-\beta_P\big)}{1-\| P \|^{1-2s}}.
\end{equation*}
Then the genus character $L$-function satisfies
\begin{equation}\label{KM-thm-1-Rit-12.3.2-eq}
L(s,\chi_{D_1,D_2}^{(\Delta)})
=L(s,J_{D_1})L(s+\kappa_\Delta\tau,J_{D_2})
\prod_{\substack{P\mid F_0\\P\mathrm{:prime}}} 
\epsilon_P(s,\chi_{D_1,D_2}^{(\Delta)}).
\end{equation}
Furthermore, put 
$$\Lambda(s,\chi_{D_1,D_2}^{(\Delta)})
:=\|F_0\|^{s} G_{D_1}(s)G_{D_2}(s+\kappa_\Delta \tau)
L(s,\chi_{D_1,D_2}^{(\Delta)}),$$
where 
$$G_D(s):=\left\{
\begin{array}{ll}
q^{\frac{\deg{D}-1}{2}s} &\text{if $\deg(D)$ is odd},\\
q^{\frac{\deg{D}-2}{2}s}(1-q^{-s})^{-1} &\text{if $\deg(D)$ is even}.
\end{array}\right.$$
Then $L(s,\chi_{D_1,D_2}^{(\Delta)})$ admits meromorphic continuation to the whole complex plane and satisfies the functional equation
$$\Lambda(s,\chi_{D_1,D_2}^{(\Delta)})=\Lambda(1-s,\chi_{D_1,D_2}^{(\Delta)}).$$
\end{thm}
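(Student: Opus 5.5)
The approach is to compute the Euler product of $L(s,\chi_{D_1,D_2}^{(\Delta)})$ locally, prime by prime, and then compare it with $L(s,J_{D_1})L(s+\kappa_\Delta\tau,J_{D_2})$. Invertible ideals of $\cO_\Delta$ factor uniquely into invertible $P$-primary components, since $\cO_\Delta$ is one-dimensional. The genus character is a class character that is multiplicative on these components, and on a $P$-primary ideal it depends only on local data at $P$. Hence
\[
L(s,\chi_{D_1,D_2}^{(\Delta)})=\prod_{P}\sum_{\fa\ P\text{-primary}}\chi_{D_1,D_2}^{(\Delta)}(\fa)\,\|\fN_\Delta(\fa)\|^{-s}.
\]
The first step is the genus-character analogue of the ideal-norm description, taken from Appendix~\ref{app:genus-characters}. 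If $\fa$ has norm $N$ and $N$ is represented by the corresponding form, then $\chi_{D_1,D_2}^{(\Delta)}(\fa)=J_{D_1}(N')$ or $(-1)^{\kappa_\Delta\deg N'}J_{D_2}(N')$ for a suitable represented value $N'$ coprime to $D_1$, resp.\ to $D_2$, with the sign twist coming from $\chisgn$. For $P\nmid F_0$ the order is locally maximal, and the local factor is the usual one, $\big((1-\alpha_P\|P\|^{-s})(1-\beta_P\|P\|^{-s})\big)^{-1}$, using \eqref{splitting-alpha-beta}. In the split and inert cases one gets $\alpha_P$ resp.\ $\alpha_P=-\beta_P$. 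If $P\mid D_1D_2$ the prime ramifies, and the character on the ramified prime equals whichever of $\alpha_P,\beta_P$ is nonzero. The shift $s\mapsto s+\kappa_\Delta\tau$ implements exactly the factor $(-1)^{\kappa_\Delta\deg P}$, since $\|P\|^{-\tau}=(-1)^{\deg P}$.

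The main obstacle is the local factor at $P\mid F_0$. Here I would count invertible $P$-primary ideals of $\cO_\Delta$ of each norm $\|P\|^n$ and evaluate the character on them. I plan to use the correspondence with primitive forms and the count of solutions of $B^2\equiv\Delta\pmod{4AP^n}$ (odd characteristic, so we may take the form $AX^2+BXY+CY^2$ with $B^2-4AC=\Delta$). Equivalently, I would use the standard fact that invertible ideals of norm $P^n$ in the order of conductor $P^{m}$ correspond to ideals of the maximal order of norm $P^{n}$ not divisible by $P$ when $n\le 2m$-type truncations apply. Concretely, I would write $\cO_\Delta$ locally at $P$ as $\cA_P+P^{m_P}\cO_{\max,P}$ and compute the generating series directly. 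For $n<2m_P$ the invertible ideals of norm $P^n$ are the "primitive" ones with character value $\alpha_P^n$ (or $\beta_P^n$, which agree on them), numbering roughly $\|P\|^{\lfloor n/2\rfloor}$-type counts. For larger $n$ they are $P^{m_P}$-scaled ideals of the maximal order. Summing the resulting geometric series and dividing by the unramified-type factor $\big((1-\alpha_P\|P\|^{-s})(1-\beta_P\|P\|^{-s})\big)^{-1}$ should produce $\epsilon_P$. As a sanity check, I would recover $m_P=0$, where $\epsilon_P=1$, and mirror the integer case of Theorem~\ref{KM-L-function-formula} step by step, since the local computation is identical with $p$ replaced by $\|P\|$.

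For continuation and the functional equation, $L(s,J_D)$ equals $L_{\pm D^{(1)}}(s)$ by \eqref{L-JD-quad-L}. This is a polynomial in $q^{-s}$, or, when the field is real, a polynomial times $(1-q^{-s})^{-1}$. The known functional equation for quadratic $L$-functions gives $G_D(s)L(s,J_D)=G_D(1-s)L(1-s,J_D)$, applied to $D_1$ at $s$ and to $D_2$ at $s+\kappa_\Delta\tau$. For the latter one must check that $1-s-\kappa_\Delta\tau$ and $1-(s+\kappa_\Delta\tau)$ agree modulo the period $2\tau$ of $q^{-s}$, which they do. It remains to verify $\|P\|^{m_Ps}\epsilon_P(s)=\|P\|^{m_P(1-s)}\epsilon_P(1-s)$. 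This is a direct algebraic check: the two terms of the numerator swap under $s\mapsto1-s$ using $\alpha_P^2,\beta_P^2\in\{0,1\}$, and the denominator changes by the factor $-\|P\|^{1-2s}$. Multiplying these identities gives $\Lambda(s)=\Lambda(1-s)$.
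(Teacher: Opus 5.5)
You have the paper's global architecture: an Euler product over $P$-primary parts, the standard factor where the order is maximal, a count at conductor primes, and the functional equation. The functional equation comes from $\Lambda_{D^\ast}(s)=\Lambda_{D^\ast}(1-s)$ together with $\epsilon_P(1-s)=\|P\|^{m_P(2s-1)}\epsilon_P(s)$. That part is fine; the local identity holds identically, with the denominator picking up $-\|P\|^{2s-1}$, and it needs nothing about $\alpha_P^2,\beta_P^2$. The gaps are both in the local factors.

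The first gap is your claim that $\cO_\Delta$ is locally maximal at every $P\nmid F_0$. The theorem allows $\gcd(D_1,D_2)\neq 1$, as in Example~\ref{example:q3-check}. The conductor then satisfies $v_P(F)=v_P(F_0)+1$ at every $P\mid\gcd(D_1,D_2)$. At such $P$ with $P\nmid F_0$ the order is not maximal and $P$ is unramified in $K$. Your ramified-prime rule does not apply, since $\alpha_P=\beta_P=0$. The formula requires $L_P=1$ there, and this is a genuine cancellation, not a formality:
\begin{itemize}
\item $P\cO_\Delta$ contributes $+\|P\|^{-2s}$.
\item The regular ideals $[2P^2,PM+\sqrt\Delta]$ with $P\nmid M^2-E$, where $E=\Delta/P^2$, have character values $J_P(M^2-E)$. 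These sum to $-1$ by Lemma~\ref{character-sum}.
\item The regular ideals of larger norm exist only if $J_P(E)=1$. Their values are $J_{D_1/P}(P)^aJ_P(M_0S/2)$, which sum to $0$ over $S$.
\end{itemize}

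The second gap is at $P\mid F_0$. There you do not carry out the computation, and the facts you propose to compute from are wrong as soon as $P\mid D_1D_2$. Write $n=v_P(F)$.
\begin{itemize}
\item The regular ideals of norm $P^{2m}$ with $2m<2n$ all have character value $1$; for $P\mid D_1$ this comes from $J_P(C)=J_P(M^2/4)$. So the value is not $\alpha_P^{2m}=0$, and $\alpha_P^{2m}$, $\beta_P^{2m}$ do not agree when exactly one of them vanishes.
\item The ideals of large norm are not $P^{m_P}$-multiples of maximal-order ideals. Such multiples are $\cO_K$-modules and hence not $\cO_\Delta$-invertible.
\item The true counts carry extra factors. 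In the split case there are $2(\|P\|^n-\|P\|^{n-1})$ regular ideals of each norm $P^a$ with $a>2n$.
\end{itemize}
What is missing is exactly the content of Lemmas~\ref{pl-invertible}--\ref{eta-0-1}:
\begin{enumerate}
\item the explicit list of regular ideals of $P$-power norm;
\item the evaluation of $J^{(P)}$ on the associated forms;
\item the character sums giving $\theta_1(P^{2n})=-\|P\|^{n-1}$, and $\theta_1(P^a)=0$ for $a>2n$, when $P\mid\gcd(D_1,D_2)$;
\item the symmetry \eqref{general-symmetry-obstruction}, which gives $\theta_1(P^{2n+1})=\beta_P\|P\|^n$ when $P\mid D_1$ and $P\nmid D_2$.
\end{enumerate}
Without these, ``summing the geometric series should produce $\epsilon_P$'' and ``mirror the integer case'' are assertions rather than a proof.
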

In contrast to the number field formula in Theorem \ref{KM-L-function-formula}, the product formula \eqref{KM-thm-1-Rit-12.3.2-eq} has a shift of the variable in one of the two quadratic $L$-functions when $\kappa_\Delta=1$.

We further assume that $D_1,D_2$ are monic and satisfy 
$\deg D_1\equiv\deg D_2\pmod 2$. 
Hence $\Delta=D_1D_2F_0^2$ is monic of even degree. For $W\in\XX_\Delta$, 
we interpret $\chi_{D_1,D_2}^{(\Delta)}(W)=\chi_{D_1,D_2}^{(\Delta)}([W]_{\sim_+})$ 
via the correspondence between real quadratic irrational functions
and narrow ideal classes (cf. Appendices \ref{app:continued-fractions} and \ref{app:genus-characters}). 
The following theorem is a function field analogue of Theorem \ref{KMZ-formula}.
\begin{thm}\label{KM-thm-3-Rit-15.1.2}   
Let $\Delta=D_1D_2F_0^2$ be as above. Let 
$$\XX_\Delta^0/\sim=\{[W^{(1)}]_{\sim},\cdots,[W^{(h(\Delta))}]_{\sim}\},$$ 
where each $W^{(j)}$ is a reduced real quadratic irrational function of discriminant $\Delta$. 
Denote by $[M^{(j)}_1,M^{(j)}_2,M^{(j)}_3,\cdots]$ the standard continued fraction of $W^{(j)}$, and by $\partial^{(j)}$ its quasi-period. Put $\tau=\pi i /\log{q}$. 
Then we have: 
\begin{enumerate}[(1)]
\item\label{thm1.4-1} If $D_1\neq1$, $D_2\neq1$ and $\deg{D_1}\equiv\deg{D_2}\equiv0\pmod{2}$, then 
\begin{align*}
&h(gD_1)h(gD_2)\prod_{\substack{P\mid F_0\\P\mathrm{:prime}}}
\epsilon_P\left(\tau,\chi_{D_1,D_2}^{(\Delta)}\right)
\nonumber\allowdisplaybreaks\\
=&\frac{1}{(q+1)^2}(-1)^{\frac{\deg{\Delta}}{2}}
\sum_{j=1}^{h(\Delta)} \chi_{D_1,D_2}^{(\Delta)}(W^{(j)})
\left(4q \sum_{r=1}^{\partial^{(j)}} (-1)^{\deg{M_r^{(j)}}} \frac{\| M_r^{(j)}\|-1}{q-1} + 2t^{(j)}(q-1)\right),
\end{align*}
where $t^{(j)}$ is the number of indices $r$ $(1\le r\le \partial^{(j)})$ for which $\deg{M_r^{(j)}}$ 
is odd.\\
If $\{D_1,D_2\}=\{D,1\}$, $\deg{D}>0$ and $\deg{D}\equiv0\pmod{2}$, then 
\begin{align*}
&h(gD)\prod_{\substack{P\mid F_0\\P\mathrm{:prime}}}
\epsilon_P\left(\tau,\chi_{D,1}^{(\Delta)}\right)
\nonumber\allowdisplaybreaks\\
=&\frac{1}{q+1}(-1)^{\frac{\deg{\Delta}}{2}}
\sum_{j=1}^{h(\Delta)} 
\left(4q
\sum_{r=1}^{\partial^{(j)}} (-1)^{\deg{M_r^{(j)}}} \frac{\| M_r^{(j)}\|-1}{q-1} + 2t^{(j)}(q-1)\right).
\end{align*}
\item\label{thm1.4-2} If $\deg{D_1}\equiv\deg{D_2}\equiv1\pmod{2}$, then $h^+(\Delta)=2h(\Delta)$,
\begin{align*}
&h(D_1)h(D_2)\prod_{\substack{P\mid F_0\\P\mathrm{:prime}}} 
\epsilon_P\left(\tfrac{1-\chisgn(-1)}{2}\tau,\chi_{D_1,D_2}^{(\Delta)}\right)
\nonumber\allowdisplaybreaks\\
=&\chisgn(-1)^{\frac{\deg{\Delta}}{2}}
\sum_{j=1}^{h(\Delta)} \chi_{D_1,D_2}^{(\Delta)}(W^{(j)}) 
 \left(\sum_{r=1}^{\partial^{(j)}} \chisgn{\left((-1)^{r-1}M^{(j)}_r\right)}
\chisgn(-1)^{\deg{M_r^{(j)}}} \frac{\| M_r^{(j)}\|-1}{q-1}\right), 
\end{align*}
and
\begin{align*}
&h(gD_1)h(gD_2)\prod_{\substack{P\mid F_0\\P\mathrm{:prime}}} 
\epsilon_P\left(\tfrac{1+\chisgn(-1)}{2}\tau,\chi_{D_1,D_2}^{(\Delta)}\right)
\nonumber\allowdisplaybreaks\\
=&(-\chisgn(-1))^{\frac{\deg{\Delta}}{2}}
\sum_{j=1}^{h(\Delta)} \chi_{D_1,D_2}^{(\Delta)}(W^{(j)}) 
 \left(  \sum_{r=1}^{\partial^{(j)}} \chisgn{\left((-1)^{r-1}M_r^{(j)} \right)}
(-\chisgn(-1))^{\deg{M_r^{(j)}}} \frac{\| M_r^{(j)}\|-1}{q-1}\right).
\end{align*}
\end{enumerate}
\end{thm}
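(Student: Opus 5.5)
We obtain Theorem \ref{KM-thm-3-Rit-15.1.2} by computing one special value of $L(s,\chi_{D_1,D_2}^{(\Delta)})$ in two ways. On one side we use the product formula \eqref{KM-thm-1-Rit-12.3.2-eq} of Theorem \ref{KM-thm-1-Rit-12.3.2}. On the other side we write the same value as a finite sum over narrow ideal classes, and evaluate each class contribution by reduction theory of continued fractions.

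\emph{Step 1: choose the evaluation point.} Since $\Delta$ is monic of even degree, $\kappa_\Delta=0$, so the product formula reads
\[
L(s,J_{D_1})L(s,J_{D_2})\prod_P\epsilon_P.
\]
The relevant points are $s=\tau$ and $s=0$ (where $q^{-s}$ equals $-1$ or $1$), together with the shifts appearing in part (2).
\begin{enumerate}[(i)]
\item By \eqref{L-JD-quad-L}, $L(s,J_{D})=L_{\pm D}(s)$.
\item Evaluating at $s=\tau$ turns $\|F\|^{-s}$ into $(-1)^{\deg F}$. Hence $L_{D}(\tau)$ equals $L_{gD}(0)$ up to the quadratic twist by $\chisgn(-1)^{\deg}$ and the substitution $g\leftrightarrow$ nonsquare.
\item $L_{gD}(0)$ is the $L$-value of an imaginary quadratic function field. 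The analytic class number formula (Artin, as in Hayes/Rosen) expresses it through $h(gD)$; for $\deg D$ even we get $L_{gD}(0)=\tfrac{2}{q+1}h(gD)$ up to normalisation, which is the source of the factors $(q+1)^{-1}$.
\item In case (2), with $\deg D_i$ odd, $L_{\pm D_i}(0)$ equals $h(\pm D_i)$, and the sign $\pm$ depends on $\chisgn(-1)$. This explains why the two evaluation points there are $\tfrac{1\mp\chisgn(-1)}{2}\tau$.
\end{enumerate}
The case $D_2=1$ is analogous, using $L(s,J_1)=(1-q^{1-s})^{-1}$ evaluated at $\tau$.

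\emph{Step 2: split the Dirichlet series by classes.} We write
\[
L(s,\chi)=\sum_{C\in C^+(\Delta)}\chi(C)\,Z_C(s),\qquad Z_C(s)=\sum_{\fa\in C}\|\fN_\Delta(\fa)\|^{-s}.
\]
Fix an ideal $\fb$ in $C^{-1}$. Then $Z_C(s)$ becomes a sum over elements $\xi\in\fb$ modulo totally positive units (units of square sign), weighted by $\|N(\xi)/\fN(\fb)\|^{-s}$. Translating to the binary quadratic form attached to $W^{(j)}$ (Appendix \ref{app:genus-characters}), $Z_C(s)$ is a partial zeta function of that form, namely a sum over representations modulo the automorphism group.

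In case (2) we have $h^+(\Delta)=2h(\Delta)$. This holds because the fundamental unit has norm of nonsquare sign exactly when $\deg D_i$ is odd, via the Jacobi symbols $J_{D_i}$. As a result, each wide class splits into two narrow classes, related by multiplication by an element of sign $g$, and the character values on the two differ by $\chisgn$. This produces the factors $\chisgn((-1)^{r-1}M_r)$.

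\emph{Step 3: evaluate the partial zeta values.} Following Zagier and Hayes, we decompose the cone of representations using the reduced cycle $W_r=[M_r,M_{r+1},\dots]$ of the quasi-period. A fundamental domain for the unit action is the disjoint union over $r=1,\dots,\partial$ of the regions between consecutive convergent vectors. In each such region the values of the form are parametrised by pairs of polynomials $(A,B)$ with a degree condition relative to $\deg M_r$. Summing $\|Q(A,B)\|^{-s}$ over this region gives, after analytic continuation to $s=0$ or $s=\tau$, a contribution of the form
\[
c_1\,\frac{\|M_r\|-1}{q-1}\,\epsilon^{\deg M_r}+c_2,
\]
where $\epsilon=\pm1$ is determined by $q^{-s}$. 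Here $\frac{\|M_r\|-1}{q-1}$ counts the monic polynomials of degree less than $\deg M_r$ in a geometric sum. The constant $c_2$ is nonzero only when $\deg M_r$ is odd and $s=\tau$; this is how the term $2t^{(j)}(q-1)$ arises.

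Combining Steps 1--3 and dividing by the Step 1 class number constants yields the formulas. The main obstacle is Step 3: carrying out the lattice-point sum in each region exactly, including leading coefficients. The signs $\sgn$ of $A$, $B$ and of $(-1)^{r-1}M_r$ govern $\chisgn$ and the unit action, and bookkeeping errors there would change the constants $4q$ and $2(q-1)$. I would first verify the regional sum on the simplest case $\partial=1$ with $\deg\Delta=2$, then derive the general region sum by induction along the continued fraction using the $\mathrm{GL}_2(\cA)$ action of Appendix \ref{app:continued-fractions}.
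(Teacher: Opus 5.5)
Your outline is the paper's own strategy. You evaluate $L(s,\chi_{D_1,D_2}^{(\Delta)})$ at $s\in\{0,\tau\}$ once through \eqref{KM-thm-1-Rit-12.3.2-eq} and the class number formulas, and once as a $\chi$-weighted sum of partial zeta values computed by Hayes' corner/sector decomposition. However, two of your concrete claims are wrong.

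First, in Step 1(iii) the factor $\tfrac{2}{q+1}$ is spurious. For $\deg D$ even, $L(\tau,J_D)=L_D(\tau)=L_{gD}(0)=h(gD)$ exactly, by \eqref{D-gD-input} and \eqref{Rit-9.3.1-new}; for example, $q=3$, $D=T^2+1$ gives $L_{gD}(s)=1+3^{-s}$ and $h(gD)=2$. The factor $(q+1)^{-2}$ in (1) comes from the zeta side, namely from $(1-qU)^{-2}$ in \eqref{Z-0-formula} at $U=-1$. The single $(q+1)^{-1}$ in the $\{D,1\}$ case is $L(\tau,J_1)=\zeta_\cA(\tau)$.

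Second, in Step 2 your reason for $h^+(\Delta)=2h(\Delta)$ is reversed. The equality $h^+=2h$ means that \emph{no} unit has norm of nonsquare sign. The paper obtains it from $\chi_{D_1,D_2}^{(\Delta)}(u\cO_\Delta P^+(\Delta))=(-1)^{\deg D_1}=-1$ (Proposition \ref{change-Rit-11.3.3}). This is essential, because it gives $\varepsilon_{\chisgn}=\varepsilon_0$. Then the narrow fundamental sector is the same union of $\partial_W$ corner sectors, and $Z_{\chisgn}(s,\fa^{-1}P^+(\Delta))-Z_{\chisgn}(s,u\fa^{-1}P^+(\Delta))$ becomes the $\chisgn$-twisted sector sum \eqref{Z-chi-difference-formula}. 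If the fundamental unit had nonsquare-sign norm, as you assert, the two narrow classes would coincide and the narrow sector would be $X(\gamma_0,\gamma_{2\partial_{\fa}})$.

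The sign bookkeeping you postpone is where the content of (2) lies. The factor $\chisgn((-1)^{r-1}M_r)$ depends on $r$, so it cannot come from the class-level sign change between the two narrow classes. It is $\chisgn(\gamma_r)$ at the corners, entering through $P_1(U,\chisgn;\fa)=\sum_r\chisgn(\gamma_r)(qU)^{\deg M_r}$.

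The corners come from Hayes' continued fraction normalised by $\sgn\overline{W^H_{r+1}}=1$, not from the standard one. The missing bridge is Lemma \ref{chi-gamma-formula}. With $W_r=\mu_rW_r^H$ one has $\zeta_r=(\mu_r\mu_{r+1})^{-1}$ and $\sgn M_r^H=-\zeta_r$. These give $\partial_{\fa}=\partial_W$, $l(r)=\deg M_r$ and $\chisgn(\gamma_r)=\chisgn(W)\chisgn(-1)^{r-1}\chisgn(M_r)$.

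The leftover $\chisgn(W)$ must then cancel against $\chi_{D_1,D_2}^{(\Delta)}(\fa_j^{-1}P^+(\Delta))=\chisgn(A^{(j)})\chi_{D_1,D_2}^{(\Delta)}(W^{(j)})$, which comes from $\iota_\Delta([W]_{\sim_+})=u^{\nu_A}\fa P^+(\Delta)$. This cancellation uses $\sgn(AW)=1$ for representatives with $\sgn\overline{W}=1$ (Proposition \ref{last-remark-thm-1.4}). Choosing such representatives is legitimate only after checking that the right-hand sides are invariant under $W\mapsto\lambda W$.

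Finally, Step 3 needs no induction along the $\mathrm{GL}_2(\cA)$-action, because the sector sums are Hayes' closed formulas \eqref{Hayes-E0-formula} and \eqref{Hayes-E1-formula}. Putting $U=-1$ in \eqref{Z-0-formula} gives, for each corner with $d=\deg M_r$, the contribution $\frac{(-1)^{\deg\Delta/2}}{(q+1)^2}\bigl(4q(-1)^d\frac{q^d-1}{q-1}+(q-1)(1-(-1)^d)\bigr)$. This is exactly where $4q$ and $2t(q-1)$ come from. Putting $U=\pm1$ in \eqref{Z-chi-difference-formula} gives (2) once the sign identities above are in place. As written, your Step 3 only predicts the shape $c_1\frac{\|M_r\|-1}{q-1}\epsilon^{\deg M_r}+c_2$, not the theorem.
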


The following example illustrates Theorem \ref{KM-thm-3-Rit-15.1.2} \eqref{thm1.4-2} when $D_1$ and $D_2$ are not relatively prime, and $F_0\neq1$.
\begin{exmp}\label{example:q3-check}
Let $q=3$ and $g=2\in\FF_3^\times$. Then $\chisgn(-1)=-1$. Put
$$D_1=TQ_1=T^3+T^2+2T,\qquad D_2=TQ_2=T^3+2T^2+2T,\qquad F_0=T+1,$$
where $Q_1=T^2+T+2$ and $Q_2=T^2+2T+2$ are monic irreducible polynomials over
$\FF_3$. Then
$$\Delta=D_1D_2F_0^2=(T^4+1) (T(T+1))^2
=T^8+2T^7+T^6+T^4+2T^3+T^2,$$
and $\lfloor\sqrt{\Delta}\rfloor=T^4+T^3+2$.
A direct sum over the monic polynomials of degrees at most $2$ gives
$$L(s,J_{D_1})=1-2\cdot 3^{-s}+3^{1-2s},\qquad L(s,J_{D_2})=1+2\cdot 3^{-s}+3^{1-2s}$$
(cf. Section \ref{preliminary-sec}).
Put $\tau=\pi i/\log 3$ and then $3^{-\tau}=-1$.
The proof of Lemma \ref{class-number-special-values} gives
$$h(D_1)=6,\qquad h(D_2)=2,\qquad h(gD_1)=2,\qquad h(gD_2)=6.$$
For $P=T+1$, we have
$$m_P=1,\qquad \|P\|=3,\qquad \alpha_P=J_{D_1}(P)=-1,\qquad \beta_P=J_{D_2}(P)=1.$$
With $x=3^{-s}$, the local factor is
$$\epsilon_{T+1}\left(s,\chi_{D_1,D_2}^{(\Delta)}\right)
=\frac{(1+x)(1-x)-x^2(3x+1)(3x-1)}{1-3x^2}
=\frac{1-9x^4}{1-3x^2}=1+3x^2.$$
Thus
$\epsilon_{T+1}\left(0,\chi_{D_1,D_2}^{(\Delta)}\right)
=\epsilon_{T+1}\left(\tau,\chi_{D_1,D_2}^{(\Delta)}\right)=4$,
and both left-hand sides of Theorem \ref{KM-thm-3-Rit-15.1.2} \eqref{thm1.4-2} are 
$$6\cdot2\cdot4=48,\qquad 2\cdot6\cdot4=48.$$ 

Applying \eqref{finite-reduced} gives exactly $42$ reduced elements, which
form four wide classes. We take
$$\begin{array}{lll}
\displaystyle W^{(1)}=\frac{T^4+T^3+2T^2+T+\sqrt{\Delta}}{2T^3},
&&\displaystyle W^{(2)}=\frac{T^4+T^3+T+\sqrt{\Delta}}{2(T^2+2T+2)},\\[4mm]
\displaystyle W^{(3)}=\frac{T^4+T^3+\sqrt{\Delta}}{2(T^2+2T+1)},
&&\displaystyle W^{(4)}=\frac{T^4+T^3+2+\sqrt{\Delta}}{2}.
\end{array}$$
Their partial quotients are
\begin{align*}
(M^{(1)}_1,\ldots,M^{(1)}_7)
&=(T+1,T+1,T^2+1,T+1,T+1,2T+2,T+1),\\
(M^{(2)}_1,\ldots,M^{(2)}_5)
&=(T^2+2T,T+1,2T^2+2T+1,T^2+T+2,2T+2),\\
(M^{(3)}_1,\ldots,M^{(3)}_5)
&=(T^2+2T+1,2T+1,T+2,2T^2+T+2,2T^2+2T),\\
(M^{(4)}_1,\ldots,M^{(4)}_4)
&=(T^4+T^3+2,T,T^2+2T+1,T),
\end{align*}
and their complete quotients satisfy
$$W^{(1)}_8=2W^{(1)}_1,\qquad
W^{(2)}_6=2W^{(2)}_1,\qquad
W^{(3)}_6=2W^{(3)}_1,\qquad
W^{(4)}_5=W^{(4)}_1.$$
Hence $(\partial^{(1)},\partial^{(2)},\partial^{(3)},\partial^{(4)})=(7,5,5,4)$,
and \eqref{local-genus-form} gives
$$\left(\chi_{D_1,D_2}^{(\Delta)}(W^{(j)})\right)_{j=1}^{4}=(1,1,1,1).$$
For $1\leq j\leq4$, put
\begin{align*}
\Sigma_j
&:=\sum_{r=1}^{\partial^{(j)}}\chisgn(-1)^{r-1} \chisgn\left(M^{(j)}_r\right)
\chisgn(-1)^{\deg M^{(j)}_r}\frac{\|M^{(j)}_r\|-1}{q-1},\\
\widetilde{\Sigma}_j
&:=\sum_{r=1}^{\partial^{(j)}}\chisgn(-1)^{r-1} \chisgn\left(M^{(j)}_r\right)
\left(-\chisgn(-1)\right)^{\deg M^{(j)}_r}\frac{\|M^{(j)}_r\|-1}{q-1}.
\end{align*}
A direct substitution of the partial quotients gives
$$(\Sigma_1,\Sigma_2,\Sigma_3,\Sigma_4)=(2,-2,2,46),\qquad
(\widetilde\Sigma_1,\widetilde\Sigma_2,\widetilde\Sigma_3,\widetilde\Sigma_4)=(6,-6,6,42).$$
Therefore, both right-hand sides in Theorem \ref{KM-thm-3-Rit-15.1.2} \eqref{thm1.4-2} are 
$$2-2+2+46=48,\qquad 6-6+6+42=48.$$
\end{exmp}

The remainder of this paper is organized as follows. 
Section \ref{preliminary-sec} recalls class groups of quadratic orders, Jacobi symbols, and quadratic $L$-functions. In Section \ref{pf-main-thm-1-a}, we compute the local Euler factors of the genus character $L$-function and prove Theorem \ref{thm-f-eq}. Sections \ref{Hayes-formula} and \ref{sec-last-proofs} evaluate the relevant partial zeta functions using Hayes' sector formulas and translate these evaluations into continued-fraction data, thereby proving Theorem \ref{KM-thm-3-Rit-15.1.2}.
Appendix \ref{app:quadratic-background} collects the required facts and proofs on 
primitive quadratic forms and real quadratic irrational functions, including their correspondences with class groups, while Appendix \ref{app:genus-characters} gives the form-theoretic construction of general genus characters.

\section{Preliminaries}\label{preliminary-sec}
In this section, we recall basic facts on class groups of quadratic orders, Jacobi symbols and quadratic $L$-functions.

\subsection{Quadratic orders and ideal class groups}
For $\Delta\in\cA\setminus \cA^2$, let $K=k(\sqrt{\Delta})$, and write $\Delta=D_KF^2$, 
where $D_K\in\cA$ is square-free and $F\in\cA^{(1)}$. 
For $Z=A+B\sqrt{\Delta}\in K$ with $A,B\in k$, we define the {\it conjugate} of $Z$ by 
$\bar{Z}=A-B\sqrt{\Delta}$ and the {\it norm} of $Z$ by $\cN(Z)=Z\bar{Z}\in k$. 
The $\cA$-module $\cO_\Delta=\cA+\cA\sqrt{\Delta}$ is called an {\it order} in $K$ of discriminant $\Delta$. 
We denote by $\cO_\Delta^\times$ the unit group of $\cO_\Delta$. 
A {\it fractional $\cO_\Delta$-ideal} $\fa$ is an $\cO_\Delta$-submodule $\fa$ of $K$ for which there exists $Z\in\cO_\Delta\setminus\{0\}$ such that $Z\fa\subset\cO_\Delta$.
A fractional $\cO_\Delta$-ideal $\fa$ is {\it $\cO_\Delta$-invertible} if there exists a fractional $\cO_\Delta$-ideal $\fa_1$ such that $\fa\fa_1=\cO_\Delta$. 
Every nonzero ideal $\fa$ of $\cO_\Delta$ is a free $\cA$-submodule of rank $2$, that is, it has an $\cA$-basis $(w_1,w_2)\in\cO_\Delta^2$ such that $\fa=[w_1,w_2]:=\cA w_1+\cA w_2$. 
A basis $(w_1,w_2)\in K^2$ is called {\it oriented} if $\chisgn((\bar{w}_1w_2-w_1\bar{w}_2)/\sqrt{\Delta})=1$. 
For $\fa=[w_1,w_2]\subset\cO_\Delta$, we define the {\it ideal norm} by 
\begin{equation}\label{Rit-8.5.1}
\fN_\Delta(\fa):=\left(\frac{\bar{w}_1w_2-w_1\bar{w}_2}{\sqrt{\Delta}}\right)^{(1)}
=\sgn{\left(\frac{\bar{w}_1w_2-w_1\bar{w}_2}{\sqrt{\Delta}}\right)}^{-1} 
\left(\frac{\bar{w}_1w_2-w_1\bar{w}_2}{\sqrt{\Delta}}\right).
\end{equation}

A nonzero $\cA$-submodule $\fa\subset \cO_\Delta$ is an ideal of $\cO_\Delta$ 
if and only if it admits an $\cA$-basis of the form 
\begin{equation}\label{basis-of-the-form}
\fa=[2SA,S(B+\sqrt{\Delta})],\qquad \Delta=B^2-4AC
\end{equation}
for some $A,B,C,S\in \cA$.
Such an ideal is $\cO_\Delta$-invertible if and only if $(A,B,C)=1$.
It is called {\it $\cO_\Delta$-primitive} if one may take $S=1$, and 
{\it $\cO_\Delta$-regular} if it is both $\cO_\Delta$-invertible and $\cO_\Delta$-primitive.

Let $I(\Delta)$ be the group of {\it $\cO_\Delta$-invertible fractional $\cO_\Delta$-ideals}. 
Let $P(\Delta)$ (resp., $P^+(\Delta)$) be the group of nonzero principal ideals in the wide (resp., narrow) sense, that is, 
$$P(\Delta):=\{Z\cO_\Delta\,\mid\, Z\in K^\times\} \qquad(\text{resp., }
P^+(\Delta):=\{Z\cO_\Delta\,\mid\, Z\in K^\times,\,\chisgn(\cN(Z))=1\}).$$ 
The quotient group $C(\Delta):=I(\Delta)/P(\Delta)$ (resp., $C^+(\Delta):=I(\Delta)/P^+(\Delta)$) is called the {\it wide}
(resp., {\it narrow}) {\it ideal class group} of $\cO_\Delta$. We call $h(\Delta):=|C(\Delta)|$ (resp., $h^+(\Delta):=|C^+(\Delta)|$) the {\it wide} (resp., {\it narrow}) {\it class number} of $\cO_\Delta$.
The natural map $C^+(\Delta)\to C(\Delta)$ is surjective with kernel $P(\Delta)/P^+(\Delta)$, whose order is at most 2. Hence $h^+(\Delta)=h(\Delta)$ or $h^+(\Delta)=2h(\Delta)$.

\subsection{Jacobi symbols and quadratic $L$-functions}
We introduce the Jacobi symbol and the quadratic $L$-function.
For $M\in\cA$ and $N\in\cA\setminus\{0\}$, the {\it Jacobi symbol} is defined by
$$\left[\frac{M}{N}\right]:=\prod_{i=1}^{r}\left[\frac{M}{P_i}\right]^{e_i},$$
where $P_i\in \cA$ are primes,  $N=\zeta P_1^{e_1}\cdots P_r^{e_r}$ with 
$\zeta\in \FF_q^\times$, and for any prime $P\in\cA$
$$\left[\frac{M}{P}\right]=\left\{
\begin{array}{rl}
1 &\text{if }P\nmid M \text{ and }Z^2\equiv M \,\,(\text{mod }P) \text{ is solvable in }\cA,\\
-1 &\text{if }P\nmid M \text{ and }Z^2\equiv M \,\,(\text{mod }P) \text{ is not solvable in }\cA,\\
0 &\text{if }P\mid M.
\end{array}\right.$$
The Jacobi symbol has the following properties (cf. \cite[Chapter 3 and Proposition 4.2]{Ros02}). 
\begin{prop}\label{Rit-9.1.3}  
Let $\zeta\in\FF_q^\times$, $M_1,M_2\in\cA$, and $M,N\in\cA\setminus\{0\}$.
\begin{enumerate}[(1)]
\item\label{Rit-9.1.3-1} $\big[\frac{M_1}{N}\big]\big[\frac{M_2}{N}\big]=\big[\frac{M_1M_2}{N}\big]$.
\item\label{Rit-9.1.3-2} $M_1\equiv M_2 \pmod{N} \quad\Rightarrow\quad \big[\frac{M_1}{N}\big]=\big[\frac{M_2}{N}\big]$. 
\item\label{Rit-9.1.3-3} (law of quadratic reciprocity) \quad 
$\left[\frac{M}{N}\right]=\left[\frac{N}{M}\right]\chisgn(-1)^{\deg{M}\cdot\deg{N}}
\chisgn(M)^{\deg{N}}\chisgn(N)^{\deg{M}}$.
\item\label{Rit-9.1.3-4} (supplementary law) \quad 
$\left[\frac{\zeta}{N}\right]=\chisgn(\zeta)^{\deg{N}}$. 
\item\label{Rit-9.1.3-5} 
If $N$ is not a constant multiple of a square in $\cA$ and $M$ runs through a residue system mod $N$, then
$\sum_{M\,\,(\text{mod }N)}\left[\frac{M}{N}\right]=0$. 
\end{enumerate}
\end{prop}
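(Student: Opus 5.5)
The plan is to prove everything first for a single prime $P$ of degree $d$, so that $\|P\|=q^d$, and then extend multiplicatively, since the Jacobi symbol is by definition a product over the prime factorization of $N$. The basic tool is \emph{Euler's criterion}. Because $\cA/P\cA\cong\FF_{q^d}$ has cyclic unit group of even order $q^d-1$, we have
\[
\left[\frac{M}{P}\right]\equiv M^{(q^d-1)/2}\pmod P
\]
for every $M\in\cA$; when $P\mid M$ both sides are $0$. Both sides take values in $\{0,\pm1\}$ and $1\not\equiv-1\pmod P$ because $p\neq2$, so this congruence determines the symbol.

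\textbf{Properties (1) and (2).} Part (1) for a prime $P$ is immediate from Euler's criterion, and for general $N$ it follows from the product definition. For part (2), $M_1\equiv M_2\pmod N$ implies $M_1\equiv M_2\pmod{P_i}$ for every prime factor $P_i$ of $N$, and the prime-level symbol depends only on the residue class modulo $P_i$.

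\textbf{Property (4).} Take $\zeta\in\FF_q^\times$ and a prime $P$ of degree $d$. Write
\[
\frac{q^d-1}{2}=\frac{q-1}{2}\,\bigl(1+q+\cdots+q^{d-1}\bigr).
\]
Since $\zeta^{q^i}=\zeta$, this gives $\zeta^{(q^d-1)/2}=\bigl(\zeta^{(q-1)/2}\bigr)^d=\chisgn(\zeta)^d$. For general $N$, the claim follows from the product definition and additivity of $\deg$.

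\textbf{Property (5).} Suppose $N$ is not a constant times a square. Then some prime $P_i$ occurs in $N$ with odd exponent $e_i$. By the Chinese remainder theorem, choose $M_0$ that is a quadratic nonresidue modulo $P_i$ and $\equiv1$ modulo the other prime factors. Then $M\mapsto\left[\frac{M}{N}\right]$ restricts to a nontrivial character on $(\cA/N\cA)^\times$, since its value at $M_0$ is $(-1)^{e_i}=-1$. It vanishes on non-units, so the sum over a residue system is $0$.

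\textbf{Property (3), the main step.} The key case is $M=Q$ and $N=P$, two distinct monic primes of degrees $e$ and $d$. Let $\alpha_1,\dots,\alpha_d\in\FF_{q^d}$ be the roots of $P$, which are Galois conjugates of one another. Euler's criterion gives
\[
\left[\frac{Q}{P}\right]=Q(\alpha_1)^{(q^d-1)/2}=\Bigl(N_{\FF_{q^d}/\FF_q}Q(\alpha_1)\Bigr)^{(q-1)/2}=\Bigl(\prod_{j}Q(\alpha_j)\Bigr)^{(q-1)/2}=\mathrm{Res}(P,Q)^{(q-1)/2}.
\]
By symmetry, $\left[\frac{P}{Q}\right]=\mathrm{Res}(Q,P)^{(q-1)/2}$. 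Since $\mathrm{Res}(Q,P)=(-1)^{de}\,\mathrm{Res}(P,Q)$ and the resultant is a nonzero element of $\FF_q$, we get
\[
\left[\frac{Q}{P}\right]\left[\frac{P}{Q}\right]=\bigl((-1)^{(q-1)/2}\bigr)^{de}=\chisgn(-1)^{de}.
\]
If $P=Q$, both sides of (3) are $0$.

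For general $M,N$, write $M=\sgn(M)M^{(1)}$ and $N=\sgn(N)N^{(1)}$. By multiplicativity in each argument, the monic parts reduce to the prime case; the factor $\chisgn(-1)^{\deg M\deg N}$ is bimultiplicative in the degrees. The constants are handled by (4): $\left[\frac{\sgn(M)}{N}\right]=\chisgn(M)^{\deg N}$, and symmetrically for $\sgn(N)$. When $\gcd(M,N)\ne1$ both sides vanish. Some care is needed to confirm that the symbol in the denominator depends only on $N$ up to constants, but this holds by definition.

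I expect the only nonformal step to be the resultant computation in the prime case; the remaining care is routine bookkeeping with signs.
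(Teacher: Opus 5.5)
Your proof is correct. The paper does not prove this proposition itself; it cites Rosen's \emph{Number Theory in Function Fields} (Chapter 3 and Proposition 4.2). Your argument is essentially the standard proof found there: Euler's criterion, the norm/resultant identity $[Q/P]=\mathrm{Res}(P,Q)^{(q-1)/2}$ for distinct monic primes, reduction of the non-monic case via (1) and the supplementary law (4), and orthogonality of a nontrivial character on $(\cA/N\cA)^\times$ for (5). So it gives a self-contained version of what the paper takes from the literature.
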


For nonzero $M\in\cA$, define 
$$L_{M}(s):=\sum_{F\in\cA^{(1)}}\bigg[\frac{M}{F}\bigg] \|F\|^{-s}
=\prod_{P\mathrm{:prime}} \bigg(1-\bigg[\frac{M}{P}\bigg] \|P\|^{-s}\bigg)^{-1},\qquad 
\mathrm{Re}(s)>1.$$
We write $\zeta_{\cA}(s)=L_1(s)=(1-q^{1-s})^{-1}$.
By the supplementary law, we have
\begin{equation}\label{D-gD-input}
L_{gM}(s)=L_{M}\bigg(s+\frac{\pi i}{\log{q}}\bigg)
\end{equation}
and 
$$L_g(s)=\zeta_{\cA}\bigg(s+\frac{\pi i}{\log{q}}\bigg)=\frac{1}{1+q^{1-s}},\qquad 
\mathrm{Re}(s)>1.$$
Assume that $D\in\cA$ is square-free and $\deg(D)\ge1$. Then $L_D(s)$ is called the {\it quadratic $L$-function} associated with $k(\sqrt{D})$ and is a polynomial in $q^{-s}$ of degree at most $\deg(D)-1$
(cf. \cite[Proposition 4.3]{Ros02}).
We define 
$$\Lambda_D(s):=\left\{
\begin{array}{ll}
q^{\frac{\deg{D}-1}{2}s}L_D(s) &\text{if $\deg(D)$ is odd},\\
q^{\frac{\deg{D}-2}{2}s}(1-q^{-s})^{-1}L_D(s) &\text{if $\deg(D)$ is even and $\chi_{\sgn}(D)=1$},\\
q^{\frac{\deg{D}-2}{2}s}(1+q^{-s})^{-1}L_D(s) &\text{if $\deg(D)$ is even and $\chi_{\sgn}(D)=-1$}.
\end{array}\right.$$
Then we have the functional equation
\begin{equation}\label{quad-L-fcn-eq}
\Lambda_D(s)=\Lambda_D(1-s)
\end{equation}
(cf. \cite[Theorem 5.9 and the proof of Theorem 17.8A]{Ros02}).
Combining \eqref{quad-L-fcn-eq} with \cite[Theorem 17.8A]{Ros02}, we obtain
\begin{equation}\label{Rit-9.3.1-new}
L_D(0)=\left\{
\begin{array}{ll}
h(D) &\text{if $k(\sqrt{D})$ is imaginary},\\
0 &\text{if $k(\sqrt{D})$ is real}.
\end{array}\right.
\end{equation}
The preceding definition of $\Lambda_D(s)$ and the functional equation \eqref{quad-L-fcn-eq} remain valid also when $\deg{D}=0$.

\section{Proof of Theorem \ref{thm-f-eq}}\label{pf-main-thm-1-a}
We continue to use the notation of Theorem \ref{thm-f-eq} 
and write $\Delta=D_KF^2$, where $D_K\in\cA$ is square-free and $F\in\cA^{(1)}$ is the conductor of $\cO_\Delta$.
For each prime $P$, $m_P=v_P(F_0)$ and 
$$v_P(F)=\left\{\begin{array}{ll}
v_P(F_0)+1 &\text{if }P\mid D_1 \text{ and }P\mid D_2,\\
v_P(F_0) &\text{otherwise},
\end{array}\right.$$
where $v_P$ denotes the $P$-adic valuation.
The ideal norm $\fN_\Delta$ and the genus character $\chi_{D_1,D_2}^{(\Delta)}$ are multiplicative, and every $\cO_\Delta$-invertible ideal of $\cO_\Delta$ decomposes uniquely into components of $P$-power norm. Hence the genus character $L$-function has the Euler product expression
$$L(s,\chi_{D_1,D_2}^{(\Delta)})
=\prod_{P\text{:prime}}L_P(s,\chi_{D_1,D_2}^{(\Delta)}),$$
where 
\begin{equation}\label{P-factor-def}
L_P(s,\chi_{D_1,D_2}^{(\Delta)}):=\sum_{t\ge0}\sum_{\substack{\cO_\Delta\text{-invertible ideal }\fa\subset\cO_\Delta\\ \fN_\Delta(\fa)=P^t}}
\frac{\chi_{D_1,D_2}^{(\Delta)}(\fa)}{\|\fN_\Delta(\fa)\|^s}
=\sum_{t\ge0}\frac{\eta_1(P^t)}{\|P\|^{ts}},
\end{equation}
\begin{eqnarray*}
\eta_1(P^t)
&:=&\sum_{\substack{\cO_\Delta\text{-invertible ideal }\fa\subset\cO_\Delta\\ \fN_\Delta(\fa)=P^t}} \chi_{D_1,D_2}^{(\Delta)}(\fa)
=\sum_{j=0}^{\lfloor t/2\rfloor}
\sum_{\substack{\cO_\Delta\text{-regular ideal }\fb\subset\cO_\Delta\\ \fN_\Delta(\fb)=P^{t-2j}}}
\chi_{D_1,D_2}^{(\Delta)}(\fb)
=\sum_{j=0}^{\lfloor t/2\rfloor}\theta_1(P^{t-2j})
\end{eqnarray*}
and
$$\theta_1(P^a):=\sum_{\substack{\cO_\Delta\text{-regular ideal }\fb\subset\cO_\Delta\\ \fN_\Delta(\fb)=P^{a}}}
\chi_{D_1,D_2}^{(\Delta)}(\fb).$$
We analogously define $\eta_0(P^t)$ and $\theta_0(P^a)$ by replacing all of the values $\chi_{D_1,D_2}^{(\Delta)}(\fb)$ with $1$ (cf. \cite[Section 2]{KM20}).

The computation of the local factor $L_P(s,\chi_{D_1,D_2}^{(\Delta)})$ is divided into two cases according to whether $P$ divides the conductor $F$. If $P\nmid F$, every
$\cO_\Delta$-invertible ideal of $\cO_\Delta$ appearing in the sum defining
$\eta_1(P^t)$ is prime to the conductor. Hence the unique factorization
theorem for ideals prime to the conductor applies, and all prime
factors lie above $P$. Thus, in Section \ref{subsec-3.1}, we compute
$\eta_1(P^t)$ directly. If $P\mid F$, we instead classify the
$\cO_\Delta$-regular ideals of $\cO_\Delta$ with $P$-power norm and compute
$\theta_i(P^a)$ for $i=0,1$. The values $\eta_i(P^t)$ are then
recovered from these computations in Section \ref{subsec-3.2}.

\subsection{Local factors away from the conductor}\label{subsec-3.1}
\begin{lem}\label{lemma-4.1}
Let $P$ be a prime such that $P\nmid F$. Then one has 
$$L_P(s,\chi_{D_1,D_2}^{(\Delta)})
=\frac{1}{(1-\alpha_P\|P\|^{-s})(1-\beta_P\|P\|^{-s})}.$$
\end{lem}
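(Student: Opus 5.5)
Since $P\nmid F$, the order $\cO_\Delta$ is maximal at $P$. By unique factorization of ideals prime to the conductor, every $\cO_\Delta$-invertible ideal with norm $P^t$ is a product of primes of $\cO_\Delta$ lying over $P$. I will split into three cases according to $\left[\frac{D_K}{P}\right]$, that is, according to whether $P$ splits, is inert, or ramifies in $K$. In each case I identify $\eta_1(P^t)$ and sum the generating series $\sum_t\eta_1(P^t)\|P\|^{-ts}$. The key input about the genus character is its value on a prime $\fP$ above $P$ with $\fP\nmid F$. For the form-theoretic definition in Appendix \ref{app:genus-characters}, this should be $\chi_{D_1,D_2}^{(\Delta)}(\fP)=J_{D_1}(\fN_\Delta(\fP))$ when $\fN_\Delta(\fP)$ is prime to $D_1$, and the analogous twisted expression with $D_2$ (carrying the factor $(-1)^{\kappa_\Delta\deg}$ coming from reciprocity and $\chisgn$) when $\fN_\Delta(\fP)$ is prime to $D_2$. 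The two expressions agree by \eqref{splitting-alpha-beta}, since $\left[\frac{\Delta}{\fN}\right]=1$ for a norm $\fN$ of a regular ideal.

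In the split case, $\left[\frac{\Delta}{P}\right]=1$, so $P\nmid D_1D_2$ and, by \eqref{splitting-alpha-beta}, $\alpha_P=\beta_P=\pm1$. There are two primes $\fP,\bar\fP$ of norm $P$, each with character value $\alpha_P$. The ideals of norm $P^t$ are $\fP^a\bar\fP^{t-a}$ for $0\le a\le t$, so $\eta_1(P^t)=(t+1)\alpha_P^t$. Summing the series gives $(1-\alpha_P\|P\|^{-s})^{-2}$, which is the claimed factor.

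In the inert case, $\alpha_P\beta_P=-1$. The only prime is $P\cO_\Delta$, of norm $P^2$, and its character value is $\chi(P\cO_\Delta)=1$ because it is principal with square norm. Hence the series is $(1-\|P\|^{-2s})^{-1}=(1-\alpha_P x)^{-1}(1-\beta_P x)^{-1}$ with $x=\|P\|^{-s}$.

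In the ramified case, $P\mid D_K$ and $P\nmid F$, so $P$ divides exactly one of $D_1$ and $D_2$; otherwise $P^2\mid \Delta/D_K\cdot$ and so $P\mid F$. Say $P\mid D_1$, so that $\alpha_P=0$. There is a unique prime $\fP$ with $\fP^2=P\cO_\Delta$ and $\fN(\fP)=P$. Its character value is computed through $D_2$: it equals $(-1)^{\kappa_\Delta\deg P}J_{D_2}(P)=\beta_P$. The series is then $(1-\beta_P\|P\|^{-s})^{-1}$, matching the formula. The case $P\mid D_2$ is symmetric: there $\beta_P=0$, and the value equals $\alpha_P$.

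The main obstacle is rigorously establishing the value of $\chi_{D_1,D_2}^{(\Delta)}$ on $\fP$ from the form-theoretic definition. I would write $\fP=[P,(B+\sqrt\Delta)/2]$ in the form \eqref{basis-of-the-form} with $A=P$, so that the associated primitive form represents $P$. The definition then evaluates $\chi$ via $J_{D_1}$, or via the twisted $J_{D_2}$, on a value represented by the form and prime to the relevant $D_i$. Tracking the sign twist $(-1)^{\kappa_\Delta\deg P}$ through Proposition \ref{Rit-9.1.3} (3)–(4), and checking that $\fN_\Delta$ is monic, is the delicate bookkeeping step.
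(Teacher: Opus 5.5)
Your skeleton matches the paper's proof: the cases $\left[\frac{\Delta}{P}\right]=1,-1,0$, unique factorization away from $F$, the values $\eta_1(P^t)$, and summation of the series. The split case works as you set it up. So does the ramified case with $P\mid D_2$: the basis $(2P,B+\sqrt\Delta)$ is oriented because $P$ is monic, its form $[P,B,C]$ represents $P$, and $(P,D_1)=1$, so \eqref{signed-genus-reciprocal-value} gives $\alpha_P$ directly. That case is the easy one, not ``symmetric'' to the other. In the inert case, your argument that $P\cO_\Delta\in P^+(\Delta)$, because $\chisgn(\cN(P))=\chisgn(P^2)=1$, is a valid substitute for the paper's use of the form $[1,0,-\Delta/4]$.

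The gap is the ramified case with $P\mid D_1$, the only place your ``twisted $D_2$-expression'' is actually needed. You justify it by \eqref{splitting-alpha-beta} together with $\left[\frac{\Delta}{\fN}\right]=1$ for norms of regular ideals. That fails exactly here: $\fN_\Delta(\fP)=P$ divides $\Delta$, so $\left[\frac{\Delta}{P}\right]=0$ and $J_{D_1}(P)=0$. The definition $\chi_{D_1,D_2}^{(\Delta)}(f)=\prod_{Q\mid D_1}J^{(Q)}(f)$ involves only primes of $D_1$, so it never produces $J_{D_2}(P)$ on its own. Even for a represented value $m$ prime to $\Delta$, the multiplicative extension of \eqref{splitting-alpha-beta} carries an extra factor $\chisgn(m)^{\deg\Delta}$ and yields the twist $(-1)^{\kappa_\Delta\deg m}$, not $(-1)^{\kappa_\Delta\deg P}$.

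The missing step is the symmetry relation \eqref{general-symmetry-obstruction} of Proposition \ref{symmetry-genus-characters}, which is what the paper invokes here. Concretely:
\begin{itemize}
\item Take $m=f(x,1)=Px^2+Bx+C$ with $x\equiv x_0\pmod{\Delta}$ as in Lemma \ref{qf-coefficient-prime} and $\deg x$ large. Then $(m,\Delta)=1$, $\deg m\equiv\deg P\pmod 2$ and $\chisgn(m)=1$.
\item By \eqref{signed-genus-reciprocal-value}, $\chi_{D_1,D_2}^{(\Delta)}(f)=J_{D_1}(m)$.
\item From $4Pm=(2Px+B)^2-\Delta$ you get $J_m(\Delta)=1$, so reciprocity gives $J_{D_1}(m)J_{D_2}(m)=J_\Delta(m)=(-1)^{\kappa_\Delta\deg P}$.
\item By Lemma \ref{represented-value-local}, $J_{D_2}(m)=\prod_{Q\mid D_2}J^{(Q)}(f)=J_{D_2}(P)$, since $f$ represents both $m$ and $P$ and each is prime to $D_2$.
\end{itemize}
Hence $\chi_{D_1,D_2}^{(\Delta)}(\fP)=(-1)^{\kappa_\Delta\deg P}J_{D_2}(P)=\beta_P$. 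Without this auxiliary-value argument, the value $\beta_P$ in your ramified case is asserted rather than proved.
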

\begin{proof}
The unique factorization into prime ideals holds for any nonzero ideal $\fa$ of $\cO_\Delta$ prime to $F$, that is, $(\fN_\Delta(\fa),F)=1$. 
Let $t>0$ and $\fa$ an $\cO_\Delta$-invertible ideal of $\cO_\Delta$ 
such that $\fN_\Delta(\fa)=P^t$. Let $\fq$ be any prime ideal of $\cO_\Delta$ 
such that $\fa\subset\fq$. 
Then $\fq\cap\cA=P\cA$ since $P^t\in P^t\cO_\Delta=\fa\bar{\fa}\subset \fq$, where $\bar{\fa}=\{Z\mid \bar{Z}\in\fa\}$. 

Since $P\nmid F$, we have that $P\nmid F_0$ and $\left[\frac{\Delta}{P}\right]=\alpha_P\beta_P$ by \eqref{splitting-alpha-beta}. 
If $\left[\frac{\Delta}{P}\right]=1$, then $\alpha_P\beta_P=1$ 
and $P\cO_\Delta=\fP \bar{\fP}$ with $\fP\neq \bar{\fP}$, where $\fP=[2P,X+\sqrt{\Delta}]$ is a prime ideal of norm $P$ and  $X^2\equiv \Delta \pmod{P}$. Then
$$\chi_{D_1,D_2}^{(\Delta)}(\fP)=\chi_{D_1,D_2}^{(\Delta)}(\bar{\fP})
=\left[\frac{P}{D_1}\right]=\alpha_P=\beta_P$$
by \eqref{Qw-def} and \eqref{signed-genus-reciprocal-value}; so, we get
\begin{equation}\label{eq-case-1-1}
\eta_1(P^t)=\#\{\fP^a\bar{\fP}^b:a+b=t,\,a,b\in\ZZ_{\ge0}\}\times\alpha_P^t
=(t+1)\alpha_P^t.
\end{equation}
If $\left[\frac{\Delta}{P}\right]=-1$, then $\alpha_P\beta_P=-1$ and 
$P\cO_\Delta=P[2,\sqrt{\Delta}]$ is a prime ideal of norm $P^2$. 
Then
$$\chi_{D_1,D_2}^{(\Delta)}(P\cO_\Delta)=\left[\frac{1}{D_1}\right]=1$$
by \eqref{Qw-def} and \eqref{signed-genus-reciprocal-value}; so, we have
\begin{equation}\label{eq-case-1-2}
\eta_1(P^t)=\left\{\begin{array}{ll}
1 &\text{for }2\mid t,\\ 0 &\text{for }2\nmid t.
\end{array}\right.
\end{equation}
If $\left[\frac{\Delta}{P}\right]=0$, then $P\mid D_1D_2$, $P^2\nmid D_1D_2$ and $P\cO_\Delta=\fP^2$, where $\fP=[2P,\sqrt{\Delta}]$.
Then 
$$\chi_{D_1,D_2}^{(\Delta)}(\fP)=\left\{\begin{array}{ll}
\alpha_P &\text{for }P\nmid D_1,\\ \beta_P &\text{for }P\nmid D_2
\end{array}\right.$$
by \eqref{Qw-def}, \eqref{signed-genus-reciprocal-value} and \eqref{general-symmetry-obstruction}; so, we obtain 
\begin{equation}\label{eq-case-1-3}
\eta_1(P^t)=\left\{\begin{array}{ll}
\alpha_P^t &\text{for }P\nmid D_1,\\ \beta_P^t &\text{for }P\nmid D_2.
\end{array}\right.
\end{equation}
We see that \eqref{eq-case-1-1}, \eqref{eq-case-1-2} and \eqref{eq-case-1-3} hold for $t=0$.
By \eqref{P-factor-def}, \eqref{eq-case-1-1}, \eqref{eq-case-1-2}, \eqref{eq-case-1-3} and the following identity 
$$\frac{1}{(1-\alpha_P\|P\|^{-s})(1-\beta_P\|P\|^{-s})}=\sum_{t\ge0}\sum_{\substack{a+b=t,\\a,b\in\ZZ_{\ge0}}}
\frac{\alpha_P^a \beta_P^b}{\|P\|^{ts}},$$
we get
\begin{equation*}
L_P(s,\chi_{D_1,D_2}^{(\Delta)})
=\sum_{t\ge0}\frac{\eta_1(P^t)}{\|P\|^{ts}}
=\frac{1}{(1-\alpha_P\|P\|^{-s})(1-\beta_P\|P\|^{-s})}.\qedhere
\end{equation*}
\end{proof}

\smallskip

\subsection{Local factors at primes dividing the conductor}\label{subsec-3.2}
The following lemma gives the list of all $\cO_\Delta$-regular ideals of $\cO_\Delta$ whose norm is any power of $P\mid F$, which is a function field analogue of \cite[p. 263]{BP72}.
\begin{lem}\label{pl-invertible} 
Let $\Delta=D_KF^2$ with $D_K\in\cA$ the discriminant of $K=k(\sqrt{\Delta})$ and $F\in\cA^{(1)}$ the conductor of $\cO_\Delta$. 
Let $P$ be any prime dividing $F$. 
Let $n$ be the largest integer such that $P^{2n}\mid \Delta$ and $E:=\Delta/P^{2n}$. 
Then there are four types of $\cO_\Delta$-regular ideals $[2P^l,Z+\sqrt{\Delta}]$ with $l>0:$ 
\begin{enumerate}[(1)]
\item $l=2m<2n$, $Z=P^mM$, $(M,P)=1$, 
\item $l=2n$, $Z=P^nM$, $(M^2-E,P)=1$, 
\item $l=2n+1$, $P\mid E$, $Z=P^nM$, $P\mid M$, 
\item $l>2n$, $\big[\frac{E}{P}\big]=1$, $Z=P^nM$, $\pm M=M_0+SP^{l-2n}$ with $(S,P)=1$. 
\end{enumerate}
Here, $M_0$ denotes a fixed solution of $M_0^2\equiv E \pmod{P^{l-2n+1}}$.
\end{lem}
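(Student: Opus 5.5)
We reduce everything to a congruence condition on $Z$ modulo $P^l$, via the description \eqref{basis-of-the-form}. An ideal $[2P^l,Z+\sqrt{\Delta}]$ of norm $P^l$ is $\cO_\Delta$-regular exactly when we can write $\Delta=Z^2-4P^lC$ with $(P^l,Z,C)=1$. So we must classify the $Z$, taken modulo $2P^l$ (equivalently modulo $P^l$, since $2$ is a unit), such that
\begin{equation*}
Z^2\equiv\Delta\pmod{P^l}\qquad\text{and}\qquad \gcd\!\Big(P,\;Z,\;\frac{Z^2-\Delta}{P^l}\Big)=1 .
\end{equation*}
Write $v=v_P(Z)$. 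We have $v_P(\Delta)=2n+e$ with $e\in\{0,1\}$, where $e=1$ exactly when $P\mid E$. The plan is a case analysis comparing $2v$ with $2n+e$ and $l$.

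First case: $2v<2n+e$, so $v\le n$. Then $v_P(Z^2-\Delta)=2v$, and solvability forces $l\le 2v$. If $v\ge1$, then $P\mid Z$, so we need $P\nmid C$, which means $l=2v$. This gives $l=2m$ and $Z=P^mM$ with $(M,P)=1$. When $m<n$ this is type (1). When $m=n$ (which requires $e=1$ for $2v<2n+e$) it is covered by type (2), since $(M^2-E,P)=1$ holds automatically when $P\mid E$ and $P\nmid M$. If $v=0$, then $l\le0$, which is excluded.

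Second case: $v\ge n$ with $l\ge 2n$. Write $Z=P^nM$, so $Z^2-\Delta=P^{2n}(M^2-E)$. Note that $n\ge1$ whenever $P\mid F$, since $P^2\mid\Delta$; hence $P\mid Z$ and the condition becomes $v_P(M^2-E)=l-2n$ exactly.
\begin{itemize}
\item[(a)] If $l=2n$, we need $(M^2-E,P)=1$, which is type (2).
\item[(b)] If $l>2n$ and $P\mid E$ (so $P^2\nmid E$), then $M^2\equiv E\pmod P$ forces $P\mid M$. Hence $v_P(M^2-E)=1$ and $l=2n+1$, which is type (3).
\item[(c)] If $l>2n$ and $P\nmid E$, we need $M^2\equiv E\pmod{P^{l-2n}}$, so $\left[\frac{E}{P}\right]=1$, together with $M^2\not\equiv E\pmod{P^{l-2n+1}}$. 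Because $P$ is odd and $P\nmid E$, Hensel's lemma shows that the solutions modulo $P^{j}$ are exactly $\pm M_0$. So $\pm M=M_0+SP^{l-2n}$, and the nondivisibility amounts to $(S,P)=1$: expand $(M_0+SP^{l-2n})^2-E\equiv 2M_0SP^{l-2n}\pmod{P^{l-2n+1}}$ and use $P\nmid 2M_0$. This is type (4).
\end{itemize}

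We must also exclude the mixed situation $v<n$ with $l>2v$; this is handled in the first case, where $l=2v$ was forced. Finally, we check that the four types are distinct and that each listed $Z$ does satisfy the conditions, which is the converse direction of the same computations.

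The main obstacle is the bookkeeping in type (4). One has to justify the form $\pm M=M_0+SP^{l-2n}$: the two square roots $\pm M_0$ are distinct modulo $P$, and a fixed $M_0$ modulo $P^{l-2n+1}$ parametrizes all of them. One also has to keep track of the fact that $Z$ is only determined modulo $P^l$, which means $M$ is only determined modulo $P^{l-n}$. This corresponds to $S$ being taken modulo $P^{n}$ up to the stated normalization.
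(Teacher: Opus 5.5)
Your proof is correct and follows essentially the same route as the paper. Both arguments reduce regularity to the single condition $v_P(Z^2-\Delta)=l$ (using $P\mid Z$ and $l>0$), then do a case analysis that ends with Hensel lifting for type (4). The only difference is that you organize the cases by $v_P(Z)$ rather than by comparing $l$ with $2n$, which is cosmetic; the subcase $v_P(Z)\ge n$, $l<2n$, which you leave implicit, is excluded because then $v_P(Z^2-\Delta)\ge 2n>l$.
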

\begin{proof}
Since $P\mid F$ and $\Delta=Z^2-4P^lC$ with $C\in\cA$, 
one has $P^2\mid \Delta$ and $P\mid Z$.
Therefore, $[2P^l,Z+\sqrt{\Delta}]$ is $\cO_\Delta$-regular if and only if 
$C=(Z^2-\Delta)/(4P^l)\in\cA$ and $(P^l,Z,C)=1$, that is, 
\begin{equation}\label{vp-condition}
v_P(Z^2-\Delta)=l.
\end{equation}
Let $v_P(\Delta)=2n+\delta$ with $\delta\in\{0,1\}$.
If $l<2n$, \eqref{vp-condition} forces $l=2v_P(Z)$, giving the case (1).
If $l=2n$, writing $Z=P^nM$ gives the case (2). 
If $l>2n$, \eqref{vp-condition} becomes 
$$v_P(M^2-E)=l-2n.$$ 
When $P\mid E$, this is possible only for $l=2n+1$, giving the case (3). 
When $P\nmid E$, it is possible only for $\big[\frac{E}{P}\big]=1$; so, Hensel lifting gives the case (4).
\end{proof}

The following character sum is a function field analogue of \cite[Proposition 4.1]{Sma91}.
\begin{lem}\label{character-sum}
Let $P\in\cA$ be a prime. Let $f(X)=AX^2+BX+C\in \cA[X]$ be a quadratic polynomial of discriminant $\Delta\in\cA$ with $P\nmid A$. Then 
$$\sum_{X\,(\mathrm{mod}\,P)}J_P(f(X))
=\left\{\begin{array}{ll}
-J_P(A) &\text{if }P\nmid \Delta,\\
(\|P\|-1)J_P(A) &\text{if }P\mid \Delta.
\end{array}\right.$$
\end{lem}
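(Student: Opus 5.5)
The plan is to reduce to the residue field $\cA/P\cA\cong\FF_{\|P\|}$. By Proposition \ref{Rit-9.1.3} \eqref{Rit-9.1.3-2}, $J_P(f(X))$ depends only on $f(X)$ modulo $P$. For $N\in\cA$, $J_P(N)$ equals the quadratic character $\psi$ of the finite field $\FF_{\|P\|}$ evaluated at $N\bmod P$, with $\psi(0)=0$. This holds because $\|P\|$ is odd, so the squares modulo $P$ are exactly the nonzero squares in that field. The sum therefore becomes $\sum_{x\in\FF_{\|P\|}}\psi(ax^2+bx+c)$, where $a,b,c$ are the reductions of $A,B,C$, $a\neq0$, and the discriminant is $b^2-4ac\equiv\Delta\pmod P$.

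Since $2$ and $a$ are invertible modulo $P$, complete the square:
$$4a\,f(X)\equiv(2aX+b)^2-\Delta\pmod P.$$
As $X$ runs over residues mod $P$, so does $Y=2aX+b$. Using multiplicativity (Proposition \ref{Rit-9.1.3} \eqref{Rit-9.1.3-1}) and $J_P(4a)=J_P(a)$, we get
$$\sum_{X}J_P(f(X))=J_P(a)\sum_{Y\,(\mathrm{mod}\,P)}J_P(Y^2-\Delta).$$

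If $P\mid\Delta$, each term is $J_P(Y^2)$, which is $1$ for $Y\not\equiv0$ and $0$ otherwise. The sum is then $\|P\|-1$, giving $(\|P\|-1)J_P(A)$.

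If $P\nmid\Delta$, write $\delta=\Delta\bmod P\neq0$ and count solutions. The number of $Y$ with $Y^2-\delta=u$ is $1+\psi(u+\delta)$, so
$$\sum_Y\psi(Y^2-\delta)=\sum_u\psi(u)\bigl(1+\psi(u+\delta)\bigr)=\sum_u\psi(u)\psi(u+\delta).$$
Here $\sum_u\psi(u)=0$ by Proposition \ref{Rit-9.1.3} \eqref{Rit-9.1.3-5}, which applies since $P$ is not a constant times a square. For $u\neq0$ write $\psi(u)\psi(u+\delta)=\psi(1+\delta u^{-1})$. As $u$ runs over nonzero residues, $1+\delta u^{-1}$ runs over all residues except $1$. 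The sum is therefore $0-\psi(1)=-1$, giving $-J_P(A)$.

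The only step needing care is this last Jacobsthal-type evaluation, and it is elementary. The rest is bookkeeping: justifying the reduction to the residue field, which needs $\|P\|$ odd and multiplicativity, and checking $J_P(4)=1$.
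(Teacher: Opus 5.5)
Your proof is correct and follows the paper's structure: complete the square via $4Af(X)=(2AX+B)^2-\Delta$, pull out $J_P(A)$, and handle $P\mid\Delta$ directly. The only difference is the case $P\nmid\Delta$. There the paper evaluates $\sum_X J_P(X^2-\Delta)$ by counting solutions of $X^2-Y^2\equiv\Delta\pmod P$ through $U=X-Y$, $V=X+Y$, which gives $\|P\|-1$ points, whereas you reduce to $\sum_u\psi(u)\psi(u+\delta)$ and use the substitution $u\mapsto 1+\delta u^{-1}$. Both arguments are equally elementary and give $-1$.
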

\begin{proof}
Using $4Af(X)=(2AX+B)^2-\Delta$ and the fact that $2A$ is invertible modulo $P$, we obtain
$$\sum_{X\,(\mathrm{mod}\,P)}J_P(f(X))
=J_P(A)\sum_{X\,(\mathrm{mod}\,P)}J_P(X^2-\Delta).$$
If $P\mid\Delta$, the last sum is $\|P\|-1$.  
Suppose that $P\nmid\Delta$. 
Since $1+J_P(Z)$ is the number of solutions modulo $P$ of $Y^2\equiv Z\pmod P$, 
we have
\begin{align*}
\sum_{X\,(\mathrm{mod}\,P)}J_P(X^2-\Delta)
&=-\|P\| + \#\{(X,Y) :  X^2-Y^2\equiv\Delta\pmod P\} \allowdisplaybreaks\\
&=-\|P\| + \#\{(U,V):UV\equiv\Delta\pmod P\} \allowdisplaybreaks\\
&=-1. \qedhere
\end{align*}
\end{proof}

\begin{lem}\label{theta-0-1} 
Let $P$ be a prime dividing $F$, let $n$ be the largest integer such that $P^{2n}\mid \Delta$ and put $E:=\Delta/P^{2n}$. 
The values $\theta_i(P^a)$ with $a>0$ and $i\in\{0,1\}$ are given as follows. 
\begin{enumerate}[(1)]
\item\label{case-4.4.1} For $a<2n$, 
\begin{enumerate}[(a)]
\item\label{case-4.4.1-a} if $a$ is odd, then $\theta_0(P^a)=0$ and $\theta_1(P^a)=0$, 
\item\label{case-4.4.1-b} if $a=2m>0$ is even, then $\theta_0(P^a)=\|P\|^m-\|P\|^{m-1}$ and 
$\theta_1(P^a)=\|P\|^m-\|P\|^{m-1}$. 
\end{enumerate}
\item\label{case-4.4.2} For $a\ge2n$, we make a case distinction according to the value of $\big[\frac{E}{P}\big]$. 
\begin{enumerate}[{Case} 1.]
\item\label{case-4.4.2-1} Suppose $\big[\frac{E}{P}\big]=1$.
\begin{enumerate}[(a)]
\item\label{case-4.4.2-1-a} If $a>2n$, then $\theta_0(P^a)=2(\|P\|^n-\|P\|^{n-1})$ and 
$$\theta_1(P^a)=\left\{\begin{array}{ll}
\alpha_P^a\cdot 2(\|P\|^n-\|P\|^{n-1}) &\text{if }P\nmid D_1,\\
0 &\text{if }P\mid D_1.
\end{array}\right.$$
\item\label{case-4.4.2-1-b} If $a=2n$, then $\theta_0(P^{2n})=\|P\|^{n-1}(\|P\|-2)$ and 
$$\theta_1(P^{2n})=\left\{\begin{array}{ll}
\|P\|^{n-1}(\|P\|-2) &\text{if }P\nmid D_1,\\
-\|P\|^{n-1} &\text{if }P\mid D_1.
\end{array}\right.$$
\end{enumerate}
\item\label{case-4.4.2-2} Suppose $\big[\frac{E}{P}\big]=-1$.
\begin{enumerate}[(a)]
\item\label{case-4.4.2-2-a} If $a>2n$, then $\theta_0(P^a)=0$ and $\theta_1(P^a)=0$. 
\item\label{case-4.4.2-2-b} If $a=2n$, then $\theta_0(P^{2n})=\|P\|^n$ and 
$$\theta_1(P^{2n})=\left\{\begin{array}{ll}
\|P\|^n &\text{if }P\nmid D_1,\\
-\|P\|^{n-1} &\text{if }P\mid D_1.
\end{array}\right.$$
\end{enumerate}
\item\label{case-4.4.2-3} Suppose $P\mid E$. 
\begin{enumerate}[(a)]
\item\label{case-4.4.2-3-a} If $a>2n+1$, then $\theta_0(P^a)=0$, $\theta_1(P^a)=0$.
\item\label{case-4.4.2-3-b} If $a=2n$, then $\theta_0(P^{2n})=\|P\|^n-\|P\|^{n-1}$, $\theta_1(P^{2n})=\|P\|^n-\|P\|^{n-1}$.
\item\label{case-4.4.2-3-c} If $a=2n+1$, then $\theta_0(P^{2n+1})=\|P\|^n$ and 
$$\theta_1(P^{2n+1})=\left\{\begin{array}{ll}
\alpha_P\|P\|^n &\text{if }P\nmid D_1,\\
\beta_P\|P\|^n &\text{if }P\mid D_1.
\end{array}\right.$$
\end{enumerate}
\end{enumerate}
\end{enumerate}
\end{lem}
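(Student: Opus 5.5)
The plan is to compute $\theta_0$ by direct counting and $\theta_1$ by evaluating the genus character on the ideals listed in Lemma \ref{pl-invertible}. By that lemma, every $\cO_\Delta$-regular ideal of norm $P^a$ with $a>0$ has the form $[2P^a,Z+\sqrt{\Delta}]$. Here $Z$ is determined modulo $2P^a$, hence modulo $P^a$ because $2$ is a unit. The ideal corresponds to the primitive form $(P^a,2Z,C)$ with $C=(Z^2-\Delta)/(4P^a)$, up to the normalization of Appendix \ref{app:genus-characters}. So $\theta_0(P^a)$ is the number of residues $Z \bmod P^a$ allowed by the four types of Lemma \ref{pl-invertible}.

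\textbf{Computing $\theta_0$.} The counts are as follows.
\begin{enumerate}[(i)]
\item Type (1), $a=2m<2n$: $Z=P^mM$ with $M \bmod P^m$ prime to $P$, giving $\|P\|^m-\|P\|^{m-1}$. If $a<2n$ is odd, no type applies, so $\theta_0(P^a)=0$.
\item Type (2), $a=2n$: $M$ runs modulo $P^n$ subject to $M^2\not\equiv E \pmod P$. This gives $\|P\|^{n-1}$ times the number of $M \bmod P$ with $M^2\not\equiv E$. That number is $\|P\|-2$, $\|P\|$ or $\|P\|-1$ according as $\big[\frac{E}{P}\big]=1,-1$, or $P\mid E$.
\item Type (3), $a=2n+1$, $P\mid E$: $M$ modulo $P^{n+1}$ with $P\mid M$, giving $\|P\|^n$.
\item Type (4), $a>2n$, $\big[\frac{E}{P}\big]=1$: $M=\pm(M_0+SP^{a-2n})$ modulo $P^{a-n}$, with $S$ a unit modulo $P^n$. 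The two signs give distinct residues since $P\nmid M_0$, so the count is $2(\|P\|^n-\|P\|^{n-1})$.
\end{enumerate}
All remaining cases are empty, giving the zeros in the statement.

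\textbf{Computing $\theta_1$.} I use \eqref{Qw-def} and \eqref{signed-genus-reciprocal-value}: the genus character of a form equals $J_{D_1}$ of any value of the form prime to $D_1$. Equivalently, by \eqref{general-symmetry-obstruction}, it equals the signed $J_{D_2}$-value of any value prime to $D_2$.
\begin{itemize}
\item If $P\nmid D_1$, the first coefficient $P^a$ is prime to $D_1$ after a harmless $\mathrm{GL}_2$-adjustment at the other primes of $D_1$. Hence the value is $J_{D_1}(P^a)=\alpha_P^a$, and $\theta_1(P^a)=\alpha_P^a\theta_0(P^a)$. This is consistent with every case: $a$ is even in types (1) and (2), and odd only in type (3).
\item If $P\mid D_1$ and $P\nmid D_2$, I use the $D_2$-side. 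The value on a type (3) ideal is then $\beta_P\|P\|^n$ per ideal, after accounting for the sign $(-1)^{\kappa_\Delta\deg P}$ built into $\beta_P$. In type (1) the exponent is even, so the character is $1$. If $P$ divides both $D_1$ and $D_2$, one has $P\mid E$ only in the parity pattern already handled by the $n$ bookkeeping.
\end{itemize}

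\textbf{Main obstacle.} The hard part is $P\mid D_1$ for $a\ge 2n$ in Cases 1 and 2, and the vanishing in Case 1(a). Here neither $P^a$ nor $D_2$ gives a uniform value. I would evaluate the character through the other coefficient, $C\equiv (P^{2n}M^2-\Delta)/(4P^{2n}) = (M^2-E)/4$ up to a unit, which is prime to $P$ precisely in type (2). The $P$-part of the character value is then $J_P$ of a quadratic polynomial in $M$, times a factor constant in $M$. Summing over $M \bmod P$ with Lemma \ref{character-sum} gives $-J_P(\text{leading coefficient})$. The excluded residues with $M^2\equiv E$ contribute $0$ anyway. Multiplying by the $\|P\|^{n-1}$ lifts yields $-\|P\|^{n-1}$.

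For type (4) with $P\mid D_1$, the two sign choices $\pm M$ correspond to conjugate ideals. Their character values differ by $J_P(-1)$ times the relevant symbol, or pair so as to cancel after summing over $S$. I expect the exact normalization of signs in \eqref{signed-genus-reciprocal-value} to be the delicate step, and I would check it against the $P\nmid F$ computation in Lemma \ref{lemma-4.1}, where $\alpha_P=\beta_P$ must hold.
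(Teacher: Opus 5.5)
Your count of $\theta_0$ matches the paper's, and so does your evaluation of $\theta_1$ in three cases: for $P\nmid D_1$ via $J_{D_1}(P^a)=\alpha_P^a$, for type (3) with $P\mid D_1$ via \eqref{general-symmetry-obstruction}, and for $a=2n$, $P\nmid E$, $P\mid D_1$ via $C=(M^2-E)/4$ and Lemma \ref{character-sum}.

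\textbf{The gap in Case 1(a) with $P\mid D_1$.} This is the step you flag as the main obstacle, and the mechanism you suggest there cannot work. The ideals $[2P^a,\pm P^nM+\sqrt{\Delta}]$ are conjugate, but their forms $[P^a,\pm P^nM,C]$ have the same first and last coefficients. Since $J^{(Q)}$ in \eqref{local-genus-form} depends only on those, both signs carry the same character value. This is forced anyway, because $\fb\bar{\fb}=P^a\cO_\Delta$ is narrowly principal and the character is quadratic. So the $\pm$ pairing doubles the sum rather than cancelling it. Comparing with Lemma \ref{lemma-4.1} is also beside the point, since here $P\mid D_2$ too and $\alpha_P=\beta_P=0$.

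What you are missing is that evaluating through the last coefficient $C$ works in every type, not only type (2). Regularity means $v_P(Z^2-\Delta)=a$ by \eqref{vp-condition}, so $C=(Z^2-\Delta)/(4P^a)$ is always prime to $P$. Hence $J^{(P)}(f_{\fb})=J_P(C)$, and $J^{(Q)}(f_{\fb})=J_Q(P)^a$ for every $Q\mid D_1$ with $Q\neq P$. In type (4), $M_0^2\equiv E\pmod{P^{a-2n+1}}$ gives $C\equiv M_0S/2\pmod P$ for either sign. Therefore
\[
\theta_1(P^a)=2J_{D_1/P}(P)^a\,J_P(M_0/2)\sum_{\substack{S\,(\mathrm{mod}\,P^n)\\ P\nmid S}}J_P(S)=0
\]
by Proposition \ref{Rit-9.1.3} \eqref{Rit-9.1.3-5}. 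The cancellation comes from the sum over $S$ within each sign class, not from pairing the signs.

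\textbf{A case your bullets never cover.} The same device is needed for $a=2m<2n$ when $P$ divides both $D_1$ and $D_2$, which occurs as soon as $P\mid F_0$. There $J_{D_1}(P^a)=0$, and the $D_2$-side is unavailable for the same reason. Your remark about $P\mid E$ and the bookkeeping of $n$ does not apply, since in fact $P\nmid E$ in this situation. One needs $C=(M^2-P^{2(n-m)}E)/4\equiv M^2/4\pmod P$, so $J^{(P)}(f_{\fb})=1$. Together with $J_Q(P)^{2m}=1$ for the other $Q\mid D_1$, this gives $\theta_1=\theta_0$. Type (2) with $P\mid E$ and $P\mid D_1$ is also not mentioned, but your even-exponent argument on the $D_2$-side covers it.

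\textbf{Smaller slips.}
\begin{itemize}
\item The form attached to $[2P^a,Z+\sqrt{\Delta}]$ is $[P^a,Z,C]$, not $(P^a,2Z,C)$.
\item For $P\nmid D_1$ no adjustment is needed, since $P^a=f_{\fb}(1,0)$ is already prime to $D_1$. Moreover, a $\mathrm{GL}_2$ change whose determinant has nonsquare sign is not harmless when $\deg D_1$ is odd.
\item Each type (3) ideal contributes $\beta_P$; the total is $\beta_P\|P\|^n$.
\end{itemize}
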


\begin{proof}
An $\cO_\Delta$-regular ideal $\fb$ of $\cO_\Delta$ with norm $P^a$
has the form
$$\fb=[2P^a,B+\sqrt{\Delta}], \qquad f_{\fb}=[P^a,B,C], \qquad
C=\frac{B^2-\Delta}{4P^a}, \qquad (P^a,B,C)=1,$$
where $B,C\in\cA$ and $f_{\fb}$ is the corresponding primitive form of
discriminant $\Delta$. Then the value
$\chi_{D_1,D_2}^{(\Delta)}(\mathfrak b)$ is computed as
$$\chi_{D_1,D_2}^{(\Delta)}(f_{\fb})=\prod_{Q\mid D_1} J^{(Q)}(f_{\fb}),$$
where the product is over prime divisors of $D_1$ (cf. Appendix \ref{app:genus-characters}).

{(i) The vanishing cases.}
By Lemma \ref{pl-invertible}, there are no $\mathcal O_\Delta$-regular ideals of norm
$P^a$ when $a<2n$ is odd, when $a>2n$ and $\left[\frac{E}{P}\right]=-1$,
or when $a>2n+1$ and $P\mid E$.
This proves all the asserted vanishing statements: {(1) (a), (2) Case 2(a), 
and (2) Case 3(a)}.

{(ii) The cases $a=2m<2n$, and $a=2n$ with $P\mid E$.}
In the second case, we put $m=n$. 
By Lemma \ref{pl-invertible}, in both cases
$$\fb=[2P^{2m},P^mM+\sqrt{\Delta}],\qquad M\,\, (\bmod{P^m}),\qquad P\nmid M.$$
Hence
$$\theta_0(P^{2m})=\sum_{\substack{M\,(\mathrm{mod}\,P^m)\\P\nmid M}}1
=\|P\|^m-\|P\|^{m-1}.$$
The corresponding primitive form is 
$f_{\fb}=\left[P^{2m},P^mM,(M^2-P^{2(n-m)}E)/4\right]$. 
For every prime $Q\mid D_1$ with $Q\neq P$, we have
$$J^{(Q)}(f_{\fb})=J_Q(P)^{2m}=1.$$
If $P\mid D_1$, then
$$J^{(P)}(f_{\fb})=J_P\left((M^2-P^{2(n-m)}E)/4\right)=J_P(M^2/4)=1.$$
Therefore $\chi_{D_1,D_2}^{(\Delta)}(\mathfrak b)=1$ for every such ideal, and consequently
$$\theta_1(P^{2m})=\theta_0(P^{2m})=\|P\|^m-\|P\|^{m-1},$$
which proves {(1) (b) and (2) Case 3(b)}.

{(iii) The case $a=2n$ with $P\nmid E$.}
By Lemma \ref{pl-invertible},
$$\mathfrak b=[2P^{2n},P^nM+\sqrt{\Delta}],\qquad 
M\,\,(\bmod P^n),\qquad P\nmid M^2-E.$$
Since every residue class modulo $P$ has $\|P\|^{n-1}$ lifts modulo $P^n$, we obtain
$$\theta_0(P^{2n})=\begin{cases}
\|P\|^n-2\|P\|^{n-1}=\|P\|^{n-1}(\|P\|-2) &\text{if } \left[\frac{E}{P}\right]=1,\\
\|P\|^n &\text{if } \left[\frac{E}{P}\right]=-1.
\end{cases}$$
The corresponding primitive form is $f_{\fb}=\left[P^{2n},P^nM,(M^2-E)/4\right]$.
For every prime $Q\mid D_1$ with $Q\neq P$,
$$J^{(Q)}(f_{\fb})=J_Q(P)^{2n}=1.$$
Thus, if $P\nmid D_1$, then $\chi_{D_1,D_2}^{(\Delta)}(\mathfrak b)=1$,
and hence
$$\theta_1(P^{2n})=\theta_0(P^{2n}).$$
Suppose that $P\mid D_1$. Then
$$\chi_{D_1,D_2}^{(\Delta)}(\fb)=J_P\left((M^2-E)/4\right)=J_P(M^2-E)$$
and therefore Lemma \ref{character-sum} gives
$$\theta_1(P^{2n})
=\sum_{\substack{M\,(\mathrm{mod}\,P^n)\\P\nmid M^2-E}}J_P(M^2-E)
=\sum_{M\,(\mathrm{mod}\,P^n)}J_P(M^2-E)
=-\|P\|^{n-1}.$$
This proves {(2) Case 1(b) and (2) Case 2(b)}.

{(iv) The case $a>2n$ with $\left[\frac{E}{P}\right]=1$.}
By Lemma \ref{pl-invertible},
$$\fb=[2P^a,P^nM+\sqrt{\Delta}], \qquad
\pm M=M_0+SP^{a-2n}, \qquad
P\nmid S,$$
where $S$ runs through a residue system modulo $P^n$. 
The two signs give disjoint classes since $P\nmid M_0$. Thus
$$\theta_0(P^a)=2\sum_{\substack{S\,(\mathrm{mod}\,P^n)\\P\nmid S}}1
=2\bigl(\|P\|^n-\|P\|^{n-1}\bigr).$$
The corresponding primitive form is 
$f_{\fb}=\left[P^a,P^nM,(M^2-E)/(4P^{a-2n})\right]$. 
Since $M_0^2\equiv E\pmod{P^{a-2n+1}}$, we have
$$\frac{M^2-E}{4P^{a-2n}} \equiv \frac{M_0S}{2} \pmod{P}.$$
Therefore, for every prime $Q\mid D_1$,
$$J^{(Q)}(f_{\fb})=\begin{cases}
J_Q(P)^a &\text{if }Q\neq P,\\
J_Q(M_0S/2) &\text{if }Q=P.
\end{cases}$$
If $P\nmid D_1$, then
$$\chi_{D_1,D_2}^{(\Delta)}(\fb)=\prod_{Q\mid D_1}J_Q(P)^a
=J_{D_1}(P)^a=\alpha_P^a,$$
and hence
$$\theta_1(P^a)=2\alpha_P^a (\|P\|^n-\|P\|^{n-1}).$$
Suppose that $P\mid D_1$. Then
$$\chi_{D_1,D_2}^{(\Delta)}(\fb)=J_{D_1/P}(P)^a J_P(M_0S/2).$$
It follows 
from Proposition \ref{Rit-9.1.3} \eqref{Rit-9.1.3-5}
that
$$\theta_1(P^a)=2 J_{D_1/P}(P)^a J_P(M_0/2)
\sum_{\substack{S\,(\mathrm{mod}\,P^n)\\ P \nmid S}}J_P(S)=0.$$
This proves {(2) Case 1(a)}.

{(v) The case $a=2n+1$ with $P\mid E$.}
By Lemma \ref{pl-invertible},
$$\fb=[2P^{2n+1},P^nM+\sqrt{\Delta}],\qquad M\,\, (\bmod{P^{n+1}}),\qquad P\mid M.$$
Consequently,
$$\theta_0(P^{2n+1})
=\sum_{\substack{M\,(\mathrm{mod}\,P^{n+1})\\ P \mid M}}1
=\|P\|^n.$$
The corresponding primitive form is 
$f_{\fb}=\left[P^{2n+1},P^nM,(M^2-E)/(4P)\right]$.
If $P\nmid D_1$, then
$$\chi_{D_1,D_2}^{(\Delta)}(f_{\mathfrak b})
=\prod_{Q\mid D_1}J_Q(P)^{2n+1}
=J_{D_1}(P)
=\alpha_P.$$
Hence
$$\theta_1(P^{2n+1})=\alpha_P\|P\|^n.$$
Suppose that $P\mid D_1$. Since $P\mid E$, we have $P\nmid D_2$. 
By \eqref{general-symmetry-obstruction},
$$\begin{aligned}
\chi_{D_1,D_2}^{(\Delta)}(f_{\fb})
&=\chi_{\mathrm{sgn}}(-1)^{\deg\Delta\,(2n+1)\deg P}
\chi_{\mathrm{sgn}}(\Delta)^{(2n+1)\deg P} 
\chi_{D_2,D_1}^{(\Delta)}(f_{\fb})\\
&=\left(\chi_{\mathrm{sgn}}(\Delta)\chi_{\mathrm{sgn}}(-1)^{\deg\Delta}\right)^{\deg P} 
J_{D_2}(P)\\
&=\beta_P.
\end{aligned}$$
Consequently,
$$\theta_1(P^{2n+1})=\beta_P\|P\|^n.$$
This proves {(2) Case 3(c)}.
\end{proof}

\begin{lem}\label{eta-0-1}
With the notation of Lemma \ref{theta-0-1}, 
the values $\eta_i(P^t)$ with $i\in\{0,1\}$ are as follows.
\begin{enumerate}[(1)]
\item For $t<2n$, 
$$\eta_0(P^t)=\eta_1(P^t)=\left\{
\begin{array}{ll}
0 &\text{if }t\text{ is odd},\\
\|P\|^{t/2} &\text{if }t\text{ is even}.
\end{array}\right.$$
\item For $t\ge 2n$, we make a case distinction according to the value of $\big[\frac{E}{P}\big]$. 
\begin{enumerate}[{Case} 1.]
\item Suppose that $\big[\frac{E}{P}\big]=1$. Then
$\eta_0(P^t)=(t-2n+1)(\|P\|^n-\|P\|^{n-1})$ and
$$\eta_1(P^t)=\left\{\begin{array}{ll}
\alpha_P^t(t-2n+1)(\|P\|^n-\|P\|^{n-1}) &\text{if }P\nmid D_1,\\
0 &\text{if }P\mid D_1.
\end{array}\right.$$
\item Suppose that $\big[\frac{E}{P}\big]=-1$. If $t$ is odd, then 
$\eta_0(P^t)=\eta_1(P^t)=0$. 
If $t$ is even, then $\eta_0(P^t)=\|P\|^n+\|P\|^{n-1}$ and
$$\eta_1(P^t)=\left\{\begin{array}{ll}
\|P\|^n+\|P\|^{n-1} &\text{if }P\nmid D_1,\\
0 &\text{if }P\mid D_1.
\end{array}\right.$$
\item Suppose that $P\mid E$. Then 
$\eta_0(P^t)=\|P\|^n$ and
$$\eta_1(P^t)=\left\{\begin{array}{ll}
\alpha_P^t \|P\|^n &\text{if }P\nmid D_1,\\
\beta_P^t \|P\|^n &\text{if }P\mid D_1.
\end{array}\right.$$
\end{enumerate}
\end{enumerate}
\end{lem}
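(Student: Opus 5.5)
The plan is to sum the values from Lemma \ref{theta-0-1} using the decomposition $\eta_i(P^t)=\sum_{j=0}^{\lfloor t/2\rfloor}\theta_i(P^{t-2j})$ from \eqref{P-factor-def}, with the convention $\theta_i(P^0)=1$ (the unit ideal). For each $t$, the exponents $a=t-2j$ run over $t,t-2,\dots$ down to $0$ or $1$, and all have the same parity as $t$. I would split according to which ranges of $a$ (namely $a<2n$, $a=2n$, $a=2n+1$, $a>2n$) appear, and in each range substitute the values of Lemma \ref{theta-0-1}.

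\textbf{Case $t<2n$.} Every $a$ lies below $2n$. If $t$ is odd, all the terms vanish by Lemma \ref{theta-0-1}(1)(a). If $t=2m$ is even, the sum telescopes:
$$1+\sum_{k=1}^{m}\bigl(\|P\|^k-\|P\|^{k-1}\bigr)=\|P\|^m,$$
and this holds for both $i=0$ and $i=1$.

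\textbf{Case $t\ge2n$, even $t$.} The terms with $a<2n$ contribute $\|P\|^{n-1}$ by the telescoping above (with $m=n-1$). To this I add the $a=2n$ term and the terms with $a>2n$ even. Each subcase then reduces to checking a finite sum.
\begin{itemize}
\item Case 1, $P\nmid D_1$: the total is $\|P\|^{n-1}+\|P\|^{n-1}(\|P\|-2)+(t-2n)/2\cdot 2(\|P\|^n-\|P\|^{n-1})$, which equals $(t-2n+1)(\|P\|^n-\|P\|^{n-1})$. Since $t$ is even, $\alpha_P^a=1=\alpha_P^t$, so $\eta_1=\eta_0$ as claimed. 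When $P\mid D_1$, the total is $\|P\|^{n-1}-\|P\|^{n-1}+0=0$.
\item Case 2: the total is $\|P\|^{n-1}+\|P\|^n$, or $\|P\|^{n-1}-\|P\|^{n-1}=0$ when $P\mid D_1$.
\item Case 3: the total is $\|P\|^{n-1}+(\|P\|^n-\|P\|^{n-1})=\|P\|^n$, and $\alpha_P^t=\beta_P^t=1$.
\end{itemize}
One point needs care: Case 3 with $P\mid D_1$ forces $P\mid D_1$ exactly once, so $\beta_P\neq0$. Only $\alpha_P$ vanishes when $P\mid D_1$, and it appears only in the $P\nmid D_1$ formulas.

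\textbf{Case $t\ge2n$, odd $t$.} The terms with $a<2n$ vanish, so only the odd $a>2n$ survive.
\begin{itemize}
\item Case 1: there are $(t-2n+1)/2$ such terms, each equal to $2\alpha_P^a(\|P\|^n-\|P\|^{n-1})$ with $\alpha_P^a=\alpha_P^t$ (the parity is fixed and $\alpha_P=\pm1$ when $P\nmid D_1$, since $P\nmid E$ forces $P\nmid D_1D_2$ in this case). This gives $(t-2n+1)\alpha_P^t(\|P\|^n-\|P\|^{n-1})$, and $0$ when $P\mid D_1$.
\item Case 2: everything vanishes.
\item Case 3: only $a=2n+1$ contributes, giving $\alpha_P\|P\|^n=\alpha_P^t\|P\|^n$, or $\beta_P^t\|P\|^n$ when $P\mid D_1$.
\end{itemize}

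The main obstacle is bookkeeping the identity $\alpha_P^a=\alpha_P^t$. In Case 1 this requires $\alpha_P\in\{\pm1\}$, which needs $P\nmid D_1$. In Case 3, $\alpha_P$ or $\beta_P$ may be $0$ only for the factor whose discriminant is divisible by $P$, so the formulas as stated remain consistent. I would also confirm that the $a=0$ term equals $1$ in each case, since that is what makes the telescoping work.
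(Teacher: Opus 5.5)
Your proposal is correct and follows the paper's proof: the paper also obtains the lemma by summing the values of Lemma \ref{theta-0-1} through $\eta_i(P^t)=\sum_{j=0}^{\lfloor t/2\rfloor}\theta_i(P^{t-2j})$ with $\theta_i(1)=1$, and your case-by-case sums (including the telescoping to $\|P\|^{n-1}$ and the parity bookkeeping $\alpha_P^a=\alpha_P^t$) are right. One small slip: $P\nmid E$ alone does not force $P\nmid D_1D_2$, since $P$ may divide both $D_1$ and $D_2$; this is harmless, because you only need $\alpha_P=\pm1$ under $P\nmid D_1$, and that follows directly from $\alpha_P=J_{D_1}(P)$.
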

\begin{proof}
Note that $\theta_0(1)=\theta_1(1)=1$. 
It follows from Lemma \ref{theta-0-1} and 
$\eta_i(P^t)=\sum_{j=0}^{\lfloor t/2\rfloor}\theta_i(P^{t-2j})$.  
\end{proof}

\subsection{Proof of Theorem \ref{thm-f-eq}}
\noindent
\begin{proof}[Proof of Theorem \ref{thm-f-eq}]
For $P\mid F$ and $x=\|P\|^{-s}$, we claim that
\begin{equation}\label{local-factor-at-conductor}
L_P(s,\chi_{D_1,D_2}^{(\Delta)})
=\frac{(1-\alpha_P x)(1-\beta_P x)
-\|P\|^{m_P-1}x^{2m_P}(\|P\|x-\alpha_P)(\|P\|x-\beta_P)}
{(1-\alpha_P x)(1-\beta_P x)(1-\|P\|x^2)}.
\end{equation}
Let $n$ be the integer such that $P^n\parallel F$ and let $E:=\Delta/P^{2n}$.
First suppose that $P\nmid D_1D_2$.  Then $n=m_P$ and by the quadratic reciprocity law,
$$\left[\frac{E}{P}\right]=\alpha_P \beta_P.$$
If $\left[\frac{E}{P}\right]=1$, then $\alpha_P=\beta_P\in\{\pm1\}$ and Lemma \ref{eta-0-1} gives
\begin{align*}
L_P(s,\chi_{D_1,D_2}^{(\Delta)})
&=\sum_{r=0}^{n-1}\|P\|^r x^{2r} + (\|P\|^n-\|P\|^{n-1})x^{2n}\sum_{j\geq0}(j+1)(\alpha_P x)^j\\
&=\frac{1-\|P\|^nx^{2n}}{1-\|P\|x^2} + \frac{(\|P\|^n-\|P\|^{n-1})x^{2n}}{(1-\alpha_P x)^2}\\
&=\frac{(1-\alpha_P x)^2-\|P\|^{n-1}x^{2n}(1-\alpha_P \|P\| x)^2}
{(1-\alpha_P x)^2(1-\|P\| x^2)}.
\end{align*}
If $\left[\frac{E}{P}\right]=-1$, then $\alpha_P=-\beta_P\in\{\pm1\}$ and 
Lemma \ref{eta-0-1} gives
\begin{align*}
L_P(s,\chi_{D_1,D_2}^{(\Delta)})
&=\sum_{r=0}^{n-1}\|P\|^r x^{2r} + (\|P\|^n+\|P\|^{n-1})\sum_{r\geq n}x^{2r}\\
&=\frac{1-\|P\|^n x^{2n}}{1-\|P\| x^2} + \frac{(\|P\|^n+\|P\|^{n-1})x^{2n}}{1-x^2}\\
&=\frac{1-x^2+\|P\|^{n-1} x^{2n}(1-\|P\|^2 x^2)}
{(1-x^2)(1-\|P\| x^2)}.
\end{align*}
Next suppose that $P\mid D_1$ and $P\mid D_2$.  
Then $P\nmid E$, $n=m_P+1$, and $\alpha_P=\beta_P=0$. 
Hence Lemma \ref{eta-0-1} gives
$$L_P(s,\chi_{D_1,D_2}^{(\Delta)})=\sum_{r=0}^{n-1}\|P\|^r x^{2r}
=\frac{1-\|P\|^n x^{2n}}{1-\|P\| x^2}.$$
Finally, suppose that $P$ divides exactly one of $D_1$ and $D_2$.  
Then $P\mid E$, $n=m_P$, and exactly one of $\alpha_P$ and $\beta_P$ is zero.  
Let $\gamma_P$ be $\alpha_P$ if $P\nmid D_1$, or $\beta_P$ if $P\mid D_1$.
By Lemma \ref{eta-0-1},
\begin{align*}
L_P(s,\chi_{D_1,D_2}^{(\Delta)})
&=\sum_{r=0}^{n-1}\|P\|^r x^{2r} + \|P\|^n x^{2n}\sum_{j\geq0}(\gamma_Px)^j\\
&=\frac{1-\|P\|^n x^{2n}}{1-\|P\| x^2} + \frac{\|P\|^n x^{2n}}{1-\gamma_P x}\\
&=\frac{1-\gamma_P x-\|P\|^{n-1}x^{2n}(\|P\|x-\gamma_P)\|P\| x}
{(1-\gamma_P x)(1-\|P\| x^2)}.
\end{align*}
We note that $L_P\bigl(s,\chi_{D_1,D_2}^{(\Delta)}\bigr)=1$ if $P\mid F$ and $P\nmid F_0$; so, only the primes dividing $F_0$ contribute nontrivial $\epsilon_P$-factors. 
Combining these conductor factors \eqref{local-factor-at-conductor}
with Lemma \ref{lemma-4.1} proves \eqref{KM-thm-1-Rit-12.3.2-eq}. 

Now we show the meromorphic continuation and functional equation.
For $x=\|P\|^{-s}$, we recall that
$$\epsilon_P(s,\chi_{D_1,D_2}^{(\Delta)})
=\frac{(1-\alpha_P x)(1-\beta_P x)-\|P\|^{m_P-1}x^{2m_P}(\|P\|x-\alpha_P)(\|P\|x-\beta_P)}
{1-\|P\|x^2}.$$
The substitution $s\mapsto1-s$ sends $x$ to $(\|P\|x)^{-1}$, and a direct calculation gives
\begin{equation*}\label{local-factor-functional-equation}
\epsilon_P(1-s,\chi_{D_1,D_2}^{(\Delta)})
=\|P\|^{-m_P}x^{-2m_P}\epsilon_P(s,\chi_{D_1,D_2}^{(\Delta)})
=\|P\|^{m_P (2s-1)}\epsilon_P(s,\chi_{D_1,D_2}^{(\Delta)}).
\end{equation*}
Multiplying the above equations over $P\mid F_0$ gives
\begin{equation}\label{E-delta-functional-equation}
\|F_0\|^s \prod_{\substack{P\mid F_0\\P\mathrm{:prime}}}
\epsilon_P(s,\chi_{D_1,D_2}^{(\Delta)})
=\|F_0\|^{1-s} \prod_{\substack{P\mid F_0\\P\mathrm{:prime}}}
\epsilon_P(1-s,\chi_{D_1,D_2}^{(\Delta)}).
\end{equation}
For a square-free polynomial $D\in \cA$, put $D^\ast:=(-1)^{\deg D^{(1)}}D^{(1)}$.
By \eqref{L-JD-quad-L}, we see that
$$\Lambda_{D^\ast}(s)=G_{D^\ast}(s)L_{D^\ast}(s)=G_D(s)L(s,J_D).$$
By \eqref{quad-L-fcn-eq}, we have the functional equation
$$\Lambda_{D^\ast}(s)=\Lambda_{D^\ast}(1-s).$$
We note that the same conclusion holds also when $\deg D=0$. 
Moreover, $G_D(s)$ and $L(s,J_D)$ are periodic with period $2\tau=2\pi i/\log{q}$. Consequently, for $\kappa_{\Delta}\in\{0,1\}$,
\begin{align}\label{L-JD-kappa-fcn-eq}
G_D(s+\kappa_{\Delta} \tau)L(s+\kappa_{\Delta} \tau,J_D)
&=G_D(1-s-\kappa_{\Delta} \tau)L(1-s-\kappa_{\Delta} \tau,J_D)\nonumber\\
&=G_D(1-s+\kappa_{\Delta} \tau)L(1-s+\kappa_{\Delta} \tau,J_D).
\end{align}
Combining \eqref{E-delta-functional-equation}, \eqref{L-JD-kappa-fcn-eq} 
with \eqref{KM-thm-1-Rit-12.3.2-eq} gives the functional equation
$$\Lambda(s,\chi_{D_1,D_2}^{(\Delta)})=\Lambda(1-s,\chi_{D_1,D_2}^{(\Delta)})$$
and 
$L(s,\chi_{D_1,D_2}^{(\Delta)})$ admits meromorphic continuation to the whole complex plane.
\end{proof}

\section{Partial zeta functions and sector formulas}\label{Hayes-formula}
The purpose of this section is to adapt Hayes’ sector decomposition to the partial zeta functions appearing in the genus character $L$-function. We first express the genus character $L$-function in terms of wide or narrow partial zeta functions. We then decompose a fundamental sector into sectors determined by consecutive corners, and apply Hayes’ formulas to the sector zeta functions. 
The corner data are related to continued fractions in Subsection \ref{corner-lattice-reduced}.

Throughout this section, let $\Delta\in\cA$ be a monic nonsquare polynomial of even degree. Then $K=k(\sqrt{\Delta})$ is a real quadratic function field, and we write
$\Delta=D_KF^2$, where $D_K\in\cA^{(1)}$ is square-free and $F\in\cA^{(1)}$ is the conductor of $\cO_\Delta$. Choose $u\in\cO_\Delta$ with $\chisgn(\cN(u))=-1$ as in Proposition \ref{prop-negative-norm-element}.

\subsection{Partial zeta functions and sector zeta functions}\label{subsec-4.1} 

For a wide ideal class $\fa P(\Delta)\in C(\Delta)$ and a narrow ideal class $\fa P^+(\Delta)\in C^+(\Delta)$, we define the {\it wide and narrow partial zeta functions} 
for $\operatorname{Re}(s)>1$ by 
$$Z_0\bigl(s,\fa P(\Delta)\bigr)=\sum_{\fb \in \fa P(\Delta)}\frac{1}{\|\fN_\Delta(\fb)\|^s}, \qquad
Z_{\chisgn}\bigl(s,\fa P^+(\Delta)\bigr)=\sum_{\fb \in \fa P^+(\Delta)}\frac{1}{\|\fN_\Delta(\fb)\|^s},$$
where the sums are over $\cO_\Delta$-invertible ideals of $\cO_\Delta$. 
If $h^+(\Delta)=2h(\Delta)$, then the two narrow classes above $\fa P(\Delta)$ are
$\fa P^+(\Delta)$ and $u\fa P^+(\Delta)$, and
\begin{equation}\label{Z-chi-minus}
Z_{\chisgn}\bigl(s,\fa P^+(\Delta)\bigr)-Z_{\chisgn}\bigl(s,u\fa P^+(\Delta)\bigr)
=2Z_{\chisgn}\bigl(s,\fa P^+(\Delta)\bigr)-Z_0\bigl(s,\fa P(\Delta)\bigr).
\end{equation}

\begin{prop}\label{partial-zeta-decompositions}
Assume that $\Delta=D_1D_2F_0^2$ is as in Theorem \ref{KM-thm-3-Rit-15.1.2}.
\begin{enumerate}[(1)]
\item\label{prop-4.1.1} If $\deg D_1\equiv\deg D_2\equiv0\pmod 2$, then
$$L(s,\chi_{D_1,D_2}^{(\Delta)})=\sum_{\fa P(\Delta)\in C(\Delta)}
\chi_{D_1,D_2}^{(\Delta)}\bigl(\fa P(\Delta)\bigr)Z_0\bigl(s,\fa P(\Delta)\bigr).$$
\item\label{prop-4.1.2} If $\deg D_1\equiv\deg D_2\equiv1\pmod 2$, then
$h^+(\Delta)=2h(\Delta)$ and
$$L(s,\chi_{D_1,D_2}^{(\Delta)})=\sum_{\fa P(\Delta)\in C(\Delta)}
\chi_{D_1,D_2}^{(\Delta)}\bigl(\fa P^+(\Delta)\bigr)
\Big(Z_{\chisgn}\bigl(s,\fa P^+(\Delta)\bigr)-Z_{\chisgn}\bigl(s,u\fa P^+(\Delta)\bigr)\Big),$$
where $\fa P^+(\Delta)$ is either of the two narrow classes above $\fa P(\Delta)$.
\end{enumerate}
\end{prop}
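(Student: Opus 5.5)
We regroup the defining sum \eqref{genus-char-L-def} according to ideal classes. The genus character $\chi_{D_1,D_2}^{(\Delta)}$ is a character of $C^+(\Delta)$, so it is constant on each narrow class. Hence for $\mathrm{Re}(s)>1$, where everything converges absolutely,
$$L(s,\chi_{D_1,D_2}^{(\Delta)})=\sum_{\fa P^+(\Delta)\in C^+(\Delta)}\chi_{D_1,D_2}^{(\Delta)}\bigl(\fa P^+(\Delta)\bigr)Z_{\chisgn}\bigl(s,\fa P^+(\Delta)\bigr).$$
The problem then reduces to one question: is $\chi_{D_1,D_2}^{(\Delta)}$ trivial on the kernel $P(\Delta)/P^+(\Delta)$ of $C^+(\Delta)\to C(\Delta)$? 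By Proposition \ref{prop-negative-norm-element}, when this kernel is nontrivial it is generated by the class of $u\cO_\Delta$, where $\chisgn(\cN(u))=-1$. So the key step is to compute $\chi_{D_1,D_2}^{(\Delta)}(u\cO_\Delta)$ in terms of the degrees of $D_1,D_2$.

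To compute it, we use the form-theoretic description from Appendix \ref{app:genus-characters}. A principal ideal $u\cO_\Delta$ with $\chisgn(\cN(u))=-1$ corresponds to a form of discriminant $\Delta$ that represents a nonzero $N$ with $\chisgn(N)=-1$. We may choose $N$ prime to $\Delta$ by the usual representation argument. The genus character is then evaluated as $J_{D_1}(N)$, up to the sign conventions of \eqref{Qw-def} and \eqref{signed-genus-reciprocal-value}. The principal form $[1,0,-\Delta/4]$ represents $N'=\cN(u)$ itself, up to squares of units. By quadratic reciprocity (Proposition \ref{Rit-9.1.3}) and the supplementary law, the value $J_{D_1}$ at an element of sign $g$ differs from its value at the monic part by $\chisgn(g)^{\deg D_1}=(-1)^{\deg D_1}$.

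This gives both parts. When $\deg D_1$ is even, $\chi_{D_1,D_2}^{(\Delta)}(u\cO_\Delta)=1$, so the character factors through $C(\Delta)$. Grouping the one or two narrow classes above each wide class then gives $Z_0$, which proves \eqref{prop-4.1.1}. When $\deg D_1$ is odd, $\chi_{D_1,D_2}^{(\Delta)}(u\cO_\Delta)=-1$. In particular the kernel is nontrivial, since otherwise $u\cO_\Delta$ would be narrowly principal and the character value would be $1$. Hence $h^+(\Delta)=2h(\Delta)$. The two narrow classes $\fa P^+(\Delta)$ and $u\fa P^+(\Delta)$ above $\fa P(\Delta)$ carry opposite character values, and the sum becomes the stated difference. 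This difference does not depend on which class is chosen as $\fa P^+(\Delta)$, since swapping the two changes both factors by $-1$.

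The main obstacle is the sign computation of $\chi_{D_1,D_2}^{(\Delta)}(u\cO_\Delta)$. The difficulty is keeping track of the normalization in the appendix's definition: which representative value is used, and how the sign of the represented polynomial enters via \eqref{signed-genus-reciprocal-value}. I would first try to evaluate the character directly on the form $[\cN(u)^{(1)}\!\cdot\!\sgn\cN(u),\ast,\ast]$ attached to the basis $[u,u\sqrt{\Delta}]$ after re-orienting it. I would then check the outcome against Example \ref{example:q3-check}, where $\deg D_1$ is odd and the four wide classes indeed give $h^+(\Delta)=2h(\Delta)$.
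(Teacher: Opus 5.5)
Your overall structure matches the paper's. The paper proves this proposition in one line from \eqref{genus-char-L-def} and Proposition \ref{change-Rit-11.3.3}, plus the same sign-swap remark you make at the end. Your grouping by narrow classes, and your deductions of (1) and (2) from the value $\chi_{D_1,D_2}^{(\Delta)}(u\cO_\Delta P^+(\Delta))=(-1)^{\deg D_1}$, are fine.

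The gap is in that value itself, which is the only step with real content: the argument you sketch for it does not work. The sign of a represented value does not determine the character value. If a form in the class of $u\cO_\Delta$ represents $N$ prime to $D_1$ with $\chisgn(N)=-1$, then $J_{D_1}(N)=(-1)^{\deg D_1}J_{D_1}(N^{(1)})$, and you have no control over $J_{D_1}(N^{(1)})$. In fact the principal form $[1,0,-\Delta/4]$ itself represents $\cN(u)$, whose sign character is also $-1$, yet the principal class has character value $1$. Saying it represents $\cN(u)$ ``up to squares of units'' does not help, because $\cN(u)$ and $\cN(u)^{(1)}$ differ by a nonsquare constant.

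The test form you propose, with leading coefficient $\cN(u)^{(1)}\cdot\sgn\cN(u)=\cN(u)$, is also not the form of this class. By \eqref{form-action-properties}, $f_{uw}=f_w$, so $\cN(u)$ cancels against $\Omega(uw)$. Worse, writing $u=x+y\sqrt\Delta$, one has $\cN(u)\equiv x^2\pmod P$ for every $P\mid D_1$. So a form with leading coefficient $\cN(u)$ prime to $D_1$ would give $+1$, which is wrong when $\deg D_1$ is odd.

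The missing observation is that multiplying a basis by $u$ changes only the orientation, never the form. Take $w=(2,\sqrt\Delta)$, so $\Omega(w)=4$ and $f_w=[1,0,-\Delta/4]$. The basis $uw$ is not oriented, since $\Omega(uw)=4\cN(u)$. The basis $\widetilde w=(2gu,u\sqrt\Delta)$ is oriented, since $\Omega(\widetilde w)=4g\cN(u)$ and $\chisgn(g)=\chisgn(\cN(u))=-1$. Its form is $f_{\widetilde w}=\begin{pmatrix}1&0\\0&g\end{pmatrix}\cdot f_w=[g,0,-\Delta/(4g)]$, which represents the constant $g$. Then \eqref{signed-genus-reciprocal-value} and the supplementary law give $\chi_{D_1,D_2}^{(\Delta)}(u\cO_\Delta P^+(\Delta))=J_{D_1}(g)=\chisgn(g)^{\deg D_1}=(-1)^{\deg D_1}$, with no reciprocity needed. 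This is the case $\fa=\cO_\Delta$ of Proposition \ref{change-Rit-11.3.3}, which you could simply have cited. With this value in place, the rest of your argument goes through as written.
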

\begin{proof}

This follows from the definition of the genus character $L$-function in \eqref{genus-char-L-def} and Proposition \ref{change-Rit-11.3.3}. Moreover, in part (2), replacing $\fa P^+(\Delta)$ by $u\fa P^+(\Delta)$ changes the signs of both the genus-character value and the parenthesized difference. Hence each summand on the right-hand side is independent of the choice of the narrow class above $\fa P(\Delta)$.
\end{proof}

For $x\in K^\times$, let 
$$\cS(x):=\frac{\bar{x}}{x}.$$ 
Let $\chi_0:k_\infty^\times \to \{1\}$ be the trivial character, 
extended to $k_\infty$ by $\chi_0(0)=0$. For $\eta\in\{\chi_0,\chisgn\}$, let
$$K_\eta:=\{x\in K^\times:\eta(\cS(x))=1\}$$
and 
$$Z_\eta(s,\fa):=
\begin{cases}
Z_0\bigl(s,\fa P(\Delta)\bigr),&\text{if }\eta=\chi_0,\\
Z_{\chisgn}\bigl(s,\fa P^+(\Delta)\bigr),&\text{if }\eta=\chisgn.
\end{cases}$$
Choose $\varepsilon_\eta\in\cO_\Delta^\times$ so that
$$K_\eta\cap\cO_\Delta^\times=\FF_q^\times\times\langle\varepsilon_\eta\rangle,\qquad
\sgn\bar{\varepsilon}_\eta=1,\qquad
\|\cS(\varepsilon_\eta)\|>1$$
(cf. \cite[(4.3)]{Hay87}).
We denote by $\varepsilon_0$ the above fundamental unit $\varepsilon_{\chi_0}$. 

We may choose a representative $\fa$ to be an $\cO_\Delta$-invertible ideal of $\cO_\Delta$.
Every ideal of $\cO_\Delta$ in 
$\fa^{-1}P(\Delta)$ or $\fa^{-1}P^+(\Delta)$ has the form $x\fa^{-1}$ with $x\in\fa \cap K_\eta$, 
and two such elements give the same ideal precisely when they differ by a unit in $\cO_\Delta^\times \cap K_\eta$. 
Since $\|\fN_\Delta(x\fa^{-1})\| = {\|\cN(x)\|}/{\|\fN_\Delta(\fa)\|}$, we have
\begin{equation}\label{Hayes-4.4-present}
Z_\eta(s,\fa^{-1})=\frac{1}{q-1}
\|\fN_\Delta(\fa)\|^s \sideset{}{'}\sum_{x\in\fa \cap K_\eta} \frac{1}{\|\cN(x)\|^s}.
\end{equation}
Here the prime means that one representative $x$ is taken from each $\langle\varepsilon_\eta\rangle$-orbit of $\fa \cap K_\eta$; the factor $q-1$ accounts for the
constant units $\FF_q^\times$.  

For $\rho,\tau\in K^\times$ satisfying $\|\cS(\rho)\|<\|\cS(\tau)\|$, 
the {\it sector} of $(\rho,\tau)$ is defined by\footnote{Under the embedding $K^\times \hookrightarrow k_\infty^\times \times k_\infty$, $x\mapsto e(x):=(x,\bar{x})$, the sector $X(\rho,\tau)$ is the pullback to $K^\times$ of Hayes' sector $X(e(\rho),e(\tau))$ (cf. \cite[(4.7)]{Hay87}).}
$$X(\rho,\tau):=\bigl\{x\in K^\times : \|\cS(\rho)\|\le \|\cS(x)\|<\|\cS(\tau)\|\bigr\}.$$
Let $\delta_\eta$ be the characteristic function of $K_\eta$. 
Since $\cS(\varepsilon_{\eta}^{n} x)=\cS(\varepsilon_\eta)^n \cS(x)$, 
for $\rho\in K^\times$, the sector
$$\mathcal{X}_\eta=X(\rho,\varepsilon_\eta\rho)$$
is a fundamental domain for the action of $\langle\varepsilon_\eta\rangle$ on $K^\times$. 
Therefore, the identity \eqref{Hayes-4.4-present} becomes
\begin{equation}\label{Hayes-4.6-present}
Z_\eta(s,\fa^{-1})
=\frac{1}{q-1}\|\fN_\Delta(\fa)\|^s \sum_{x\in \fa \cap \mathcal{X}_\eta}
\frac{\delta_\eta(x)}{\|\cN(x)\|^s}.
\end{equation}

For $\rho,\tau\in K^\times$, put $[\rho,\tau]:=\cA\rho+\cA\tau$. 
We define the {\it sector zeta function} of $X(\rho,\tau)$ by
\begin{equation}\label{sector-zeta-definition}
Z_\eta(s;\rho,\tau):=\frac{1}{q-1}
\|\bar{\rho}\tau-\rho\bar{\tau}\|^s \sum_{x\in X(\rho,\tau)\cap[\rho,\tau]} \frac{\delta_\eta(x)}{\|\cN(x)\|^s}.
\end{equation}

\subsection{Corner pairs and the sector formulas}\label{sec-corner}
In Subsection \ref{subsec-4.1}, a fundamental domain $\mathcal{X}_\eta$ for the action 
of $\langle\varepsilon_\eta\rangle$ on $K^\times$ is chosen as a sector 
$X(\rho,\varepsilon_\eta\rho)$, 
but the two boundary elements may not form a basis of an $\cO_\Delta$-invertible ideal $\fa$ of $\cO_\Delta$.
The role of a corner pair is to subdivide this sector into smaller sectors 
whose consecutive boundary elements form bases of $\fa$; so, one can represent a partial zeta function as a sum of sector zeta functions related to $\fa$.

An ordered basis $(\rho,\tau)$ of an $\cA$-module $\fa\subseteq K$ is called a {\it corner pair} if
$$\sgn\overline\rho=\sgn\overline\tau=1,\qquad
\|\rho\|>\|\tau\|,\qquad
\|\overline\rho\|<\|\overline\tau\|.$$
An element occurring in a corner pair is called a {\it corner}.

\begin{prop}[Hayes' corner decomposition]\label{Hayes-corner-decomposition}
Let $\fa$ be an $\cO_\Delta$-invertible ideal of $\cO_\Delta$.
\begin{enumerate}[(1)]
\item\label{hayes-coner-1} 
The ideal $\fa$ admits a corner pair. 
Once an initial corner pair $(\gamma_0,\gamma_1)$ is chosen, there is a unique doubly infinite sequence $(\gamma_r)_{r\in\ZZ}\subset\fa$ such that every consecutive pair $(\gamma_{r-1},\gamma_r)$ is a corner pair and so a basis of $\fa$.
\item\label{hayes-coner-2} 
Let $\varepsilon_0$ be the unit chosen in Subsection \ref{subsec-4.1}. 
There exists an integer $\partial_{\fa}>0$ such that
$$\varepsilon_0\gamma_r=\gamma_{r+\partial_{\fa}}\qquad (r\in\ZZ).$$
Choosing $\gamma_0$ as the corner on which the fundamental domain is based, 
one obtains the disjoint decomposition
\begin{equation}\label{fundamental-sector-decomposition}
\mathcal{X}_0=X(\gamma_0,\gamma_{\partial_{\fa}})
=\bigsqcup_{r=1}^{\partial_{\fa}}X(\gamma_{r-1},\gamma_r).
\end{equation}
\item\label{hayes-coner-3} If $h^+(\Delta)=2h(\Delta)$, then
$$\cO_\Delta^\times=\cO_\Delta^\times \cap K_{\chisgn}.$$
Hence one may take $\varepsilon_{\chisgn}=\varepsilon_0$ and
$\mathcal X_{\chisgn}=\mathcal X_0$; in particular, \eqref{fundamental-sector-decomposition} also gives the fundamental sector decomposition for $\eta=\chisgn$.
\end{enumerate}
\end{prop}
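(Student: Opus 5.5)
The plan is to reduce everything to the structure of $\fa$ as a rank-two lattice in $k_\infty^2$ under the embedding $x\mapsto(x,\bar x)$, and to follow Hayes' argument \cite[\S4]{Hay87} for the maximal order. Nothing there uses maximality beyond $\fa$ being a free $\cA$-module of rank $2$ whose unit group of multipliers $\cO_\Delta^\times$ acts on it. Throughout I would measure elements by the pair $(\|x\|,\|\bar x\|)$ and note that $\|x\|\|\bar x\|=\|\cN(x)\|$.

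For part (1), existence of a corner pair comes from a "minimal vector" argument. Among nonzero $x\in\fa$ with $\|\bar x\|\le c$ (for a suitable bound $c$), I pick $\tau$ minimizing $\|x\|$. Then I pick $\rho$ minimizing $\|x\|$ among elements with $\|\bar x\|<\|\bar\tau\|$, or dually. After normalizing signs by $\FF_q^\times$ so that $\sgn\bar\rho=\sgn\bar\tau=1$, I would show that $(\rho,\tau)$ is an $\cA$-basis of $\fa$.

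The basis property uses the standard discreteness argument. If $y\in\fa$ is not in $[\rho,\tau]$, subtract the polynomial parts of its coordinates to obtain a vector strictly smaller in both absolute values. This contradicts minimality, using that the covolume $\|\bar\rho\tau-\rho\bar\tau\|$ equals $\|\rho\|\|\bar\tau\|$ by the ultrametric inequality.

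Given a corner pair $(\gamma_{r-1},\gamma_r)$, I define the next corner as $\gamma_{r+1}=\gamma_{r-1}-M\gamma_r$, where $M\in\cA$ is the unique polynomial making $\|\gamma_{r+1}\|<\|\gamma_r\|$. Concretely, $M$ is the polynomial part of $\gamma_{r-1}/\gamma_r$, adjusted by a constant so that $\sgn\bar\gamma_{r+1}=1$. I would then check $\|\bar\gamma_{r+1}\|>\|\bar\gamma_r\|$ and run the same construction backwards. Uniqueness follows because any corner pair $(\gamma_r,\gamma')$ forces $\gamma'\equiv\gamma_{r-1}\pmod{\cA\gamma_r}$ with the degree constraint pinning down the multiple.

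For part (2), I first note that $\varepsilon_0\gamma_0$ is again a corner. Indeed, $\sgn\bar\varepsilon_0=1$, and multiplication by $\varepsilon_0$ preserves $\fa$ and maps corner pairs to corner pairs. The corners are totally ordered by $\|\cS(\gamma_r)\|$, which strictly increases in $r$. Hence every corner pair is some $(\gamma_{r-1},\gamma_r)$, a consequence of uniqueness together with the fact that the sequence exhausts corners, i.e.\ any corner $\gamma$ lies between two consecutive ones and minimality forces equality. So $\varepsilon_0\gamma_0=\gamma_{\partial_\fa}$ for some $\partial_\fa>0$, and then $\varepsilon_0\gamma_r=\gamma_{r+\partial_\fa}$ for all $r$ by uniqueness. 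The decomposition \eqref{fundamental-sector-decomposition} is immediate because the intervals $[\|\cS(\gamma_{r-1})\|,\|\cS(\gamma_r)\|)$ tile $[\|\cS(\gamma_0)\|,\|\cS(\varepsilon_0\gamma_0)\|)$.

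For part (3), if some unit $\varepsilon$ had $\chisgn(\cS(\varepsilon))=-1$, then $\chisgn(\cN(\varepsilon))=-1$, because $\cS(\varepsilon)=\cN(\varepsilon)/\varepsilon^2$ and $\chisgn(\varepsilon^2)=1$. Then $u\cO_\Delta=\varepsilon^{-1}u\,\cO_\Delta$ with $\chisgn(\cN(\varepsilon^{-1}u))=1$, so $u\cO_\Delta\in P^+(\Delta)$. This gives $P(\Delta)=P^+(\Delta)$, contradicting $h^+=2h$. Therefore all units lie in $K_{\chisgn}$, $\varepsilon_{\chisgn}=\varepsilon_0$, and the sectors coincide.

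I expect the main obstacle to be the exhaustion/uniqueness step in (1)–(2), namely that the successor construction produces exactly the corners and that $\|\cS(\gamma_r)\|$ increases strictly. I would handle it via the ultrametric covolume identity as above.
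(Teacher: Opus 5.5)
Your proof is correct in outline, and it takes a genuinely different route from the paper for (1) and (2). For (3) it is the paper's argument made explicit. The paper only says that $h^+(\Delta)=2h(\Delta)$ forces every unit to have norm sign $1$; your identity $\cS(\varepsilon)=\cN(\varepsilon)/\varepsilon^2$, together with $u\cO_\Delta=(\varepsilon^{-1}u)\cO_\Delta$, is the justification behind that sentence.

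For (1) and (2) the paper argues entirely by reference. It transports the definitions along $x\mapsto(x,\bar x)$ and quotes Hayes:
\begin{itemize}
\item existence of a corner pair from \cite[Proposition~6.9]{Hay87};
\item the corner sequence from p.~214;
\item the unit shift from \cite[(6.10)]{Hay87};
\item the sector decomposition from p.~218.
\end{itemize}
You reprove these directly. You use a minimal-vector construction of a corner pair, and get the basis property by reducing $y=a\rho+b\tau$ ($a,b\in k$) modulo $\cA\rho+\cA\tau$. A division step gives successors and predecessors, and an exhaustion argument places $\varepsilon_0\gamma_0$ in the sequence. The citation buys brevity. Your route makes visible exactly what is used about $\fa$: it is a rank-two $\cA$-lattice with $\|x\|\,\|\bar x\|=\|\cN(x)\|\ge1$ on nonzero elements, stable under $\varepsilon_0$. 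Your successor step is also precisely Hayes' recursion \eqref{Hayes-recursion}, which the paper imports separately in Subsection~\ref{corner-lattice-reduced}.

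Several steps need tightening.
\begin{itemize}
\item \textbf{Sign normalization.} You must rescale the new element: $\gamma_{r+1}=\zeta^{-1}(\gamma_{r-1}-M\gamma_r)$ with $M=\lfloor\gamma_{r-1}/\gamma_r\rfloor$ and $\zeta=-\sgn M$ (compare \eqref{sgn-W-minus-zeta}). Changing $M$ by a constant would break $\|\gamma_{r+1}\|<\|\gamma_r\|$.
\item \textbf{Uniqueness.} A second corner pair $(\gamma_r,\gamma')$ only gives $\gamma'\equiv c\gamma_{r-1}\pmod{\cA\gamma_r}$ with $c\in\FF_q^\times$ (the change-of-basis determinant). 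The constant $c$ is then fixed by $\sgn\bar\gamma'=1$.
\item \textbf{Existence.} Take $\tau$ minimizing $\|\bar x\|$ among the minimizers of $\|x\|$; otherwise $\|\rho\|=\|\tau\|$ can occur. The admissible set for $\rho$ is nonempty because it contains $\varepsilon_0^{-1}\tau$, since $\|\bar\varepsilon_0\|>1$.
\item \textbf{Exhaustion.} The step you flag as the main obstacle rests not on the covolume identity but on the fact that every corner $\gamma$ is a relative minimum. That is, any nonzero $y\in\fa$ with $\|y\|\le\|\gamma\|$ and $\|\bar y\|\le\|\bar\gamma\|$ lies in $\FF_q^\times\gamma$; to see this, expand $y$ in a corner pair containing $\gamma$ and compare dominant terms. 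Given this, suppose $\|\gamma_r\|\le\|\gamma\|<\|\gamma_{r-1}\|$. Either $\|\bar\gamma_r\|\le\|\bar\gamma\|$, whence $\gamma=\gamma_r$. Or $\gamma$ is a nonzero element of $\fa$ with $\|\gamma\|<\|\gamma_{r-1}\|$ and $\|\bar\gamma\|<\|\bar\gamma_r\|$, which the same dominant-term comparison in the basis $(\gamma_{r-1},\gamma_r)$ rules out.
\item \textbf{Positivity.} $\partial_\fa>0$ follows from $\|\cS(\varepsilon_0)\|>1$ and the strict monotonicity of $\|\cS(\gamma_r)\|$.
\end{itemize}
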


\begin{proof}
Via the identification $\fa\rightarrow e(\fa)$, $x\mapsto e(x):=(x,\bar x)$, the definition above is precisely Hayes' definition of a corner pair.
The existence of a corner pair is \cite[Proposition~6.9]{Hay87}. 
The properties proved in \cite[p. 214]{Hay87} give the unique doubly infinite corner sequence in \eqref{hayes-coner-1}. 
The translation of this sequence by the unit action is \cite[(6.10)]{Hay87}, 
and the sector decomposition in \cite[p. 218]{Hay87} gives \eqref{hayes-coner-2}.
Suppose that $h^+(\Delta)=2h(\Delta)$. Then every unit has positive norm
sign, which gives the equality of unit subgroups in \eqref{hayes-coner-3}.
\end{proof}

For the rest of this subsection, we assume that either $\eta=\chi_0$
or $\eta=\chisgn$ and $h^+(\Delta)=2h(\Delta)$.\footnote{If $h^+(\Delta)=h(\Delta)$, then $\varepsilon_{\chisgn}=\varepsilon_0^2$ and $\mathcal{X}_{\chisgn}=X(\gamma_0,\varepsilon_0^2 \gamma_0)=X(\gamma_0,\gamma_{2\partial_\fa})$; hence, \eqref{fundamental-sector-decomposition} does not apply to the case of $\eta=\chisgn$.} 
For every $1\le r\le\partial_{\fa}$, one has $[\gamma_{r-1},\gamma_r]=\fa$ ; so, the definition \eqref{Rit-8.5.1} of the ideal norm gives
$$\|\bar{\gamma}_{r-1}\gamma_r-\gamma_{r-1}\bar{\gamma}_r\|
=\|\sqrt\Delta\|\,\|\fN_\Delta(\fa)\|.$$
Using \eqref{Hayes-4.6-present}, \eqref{sector-zeta-definition} and the disjoint union \eqref{fundamental-sector-decomposition}, we obtain
\begin{align}\label{partial-zeta-sector-sum}
Z_\eta(s,\fa^{-1})
&=\frac{1}{q-1}\|\fN_\Delta(\fa)\|^s \sum_{r=1}^{\partial_{\fa}} \,\,
\sum_{x\in X(\gamma_{r-1},\gamma_r) \cap[\gamma_{r-1},\gamma_r]}
\frac{\delta_\eta(x)}{\|\cN(x)\|^s} \nonumber\\
&=\sum_{r=1}^{\partial_{\fa}} \frac{\|\fN_\Delta(\fa)\|^s}
{\|\bar{\gamma}_{r-1}\gamma_r-\gamma_{r-1}\bar{\gamma}_r\|^s}
Z_\eta(s;\gamma_{r-1},\gamma_r) \nonumber\\
&=\|\sqrt\Delta\|^{-s} \sum_{r=1}^{\partial_{\fa}} Z_\eta(s;\gamma_{r-1},\gamma_r).
\end{align}

It remains to evaluate each sector zeta function. 
Let $e_\eta$ be the order of $\eta$; thus $e_\eta=1$ for $\eta=\chi_0$ 
and $e_\eta=2$ for $\eta=\chisgn$. 
Since
$$\delta_\eta(x)=\frac{1}{e_\eta}\sum_{j=0}^{e_\eta-1}\eta^j(\cS(x)),$$
the identity \eqref{sector-zeta-definition} becomes
\begin{equation}\label{sector-character-average}
Z_\eta(s;\rho,\tau)=\frac{1}{q-1}\cdot\frac{1}{e_\eta}
\sum_{j=0}^{e_\eta-1}\mathcal{D}(s,\eta^j;\rho,\tau),
\end{equation}
where
\begin{equation}\label{cD-def}
\mathcal{D}(s,\eta^j;\rho,\tau) := \|\bar{\rho}\tau-\rho\bar{\tau}\|^s 
\sum_{x\in X(\rho,\tau)\cap[\rho,\tau]} \frac{\eta^j(\cS(x))}{\|\cN(x)\|^s}.
\end{equation}
For $0 \le j \le e_\eta-1$, put
$$\mathcal{E}_j(s,\eta;\fa):=\sum_{r=1}^{\partial_{\fa}} 
\mathcal{D}(s,\eta^j;\gamma_{r-1},\gamma_r).$$
Thus \eqref{partial-zeta-sector-sum} and \eqref{sector-character-average} yield
\begin{equation}\label{partial-zeta-E-sum}
Z_\eta(s,\fa^{-1})=\frac{1}{q-1}\cdot\frac{\|\sqrt{\Delta}\|^{-s}}{e_\eta}
\sum_{j=0}^{e_\eta-1}\mathcal E_j(s,\eta;\fa).
\end{equation}
Let $U=q^{-s}$ and 
\begin{equation}\label{P-function-def}
P_j(U,\eta;\fa):=\sum_{r=1}^{\partial_{\fa}}\eta^j(\gamma_r)(qU)^{l(r)},\qquad
l(r):=\deg\left(\frac{\gamma_{r-1}}{\gamma_r}\right),
\qquad j\in\{0,1\}.
\end{equation}
To compute $\mathcal{D}(s,\eta^j;\rho,\tau)$ and $\mathcal{E}_j(s,\eta;\fa)$, 
we write an element  in a sector $X(\gamma_{r-1},\gamma_{r})$ as
$$x=A\gamma_{r-1}+B\gamma_r,\qquad A,B\in\cA.$$
Hayes translates membership in the sector into four mutually exclusive conditions 
on $\deg{A}$, $\deg{B}$, $\sgn{A}$ and $\sgn{B}$ (cf. \cite[Lemma 7.5]{Hay87}). 
Moreover, Hayes' computations \cite[(7.18), (7.21)]{Hay87} give for either
$\eta=\chi_0$ or $\eta=\chisgn$ with $h^+(\Delta)=2h(\Delta)$
\begin{equation}\label{Hayes-E0-formula}
\mathcal{E}_0(s,\eta;\fa)
=\frac{(q-1)^2}{(1-qU)^2}\partial_{\fa} 
-\frac{q(q-1)(U-1)^2}{(1-qU)^2(1-qU^2)}P_0(U,\eta;\fa)
+\frac{q-1}{1-qU^2}P_0((qU)^{-1},\eta;\fa)
\end{equation}
and for $\eta=\chisgn$ with $h^+(\Delta)=2h(\Delta)$
\begin{equation}\label{Hayes-E1-formula}
\mathcal{E}_1(s,\eta;\fa)
 =-\frac{q-1}{1-qU^2}P_1(U,\eta;\fa)
 +\frac{q-1}{1-qU^2}P_1((qU)^{-1},\eta;\fa).
\end{equation}

\begin{prop}\label{Hayes-partial-zeta-formulas}
With the notation above,
\begin{align}\label{Z-0-formula}
Z_0\bigl(s,\fa^{-1}P(\Delta)\bigr)
=U^{\frac{\deg\Delta}{2}}
\bigg(&\frac{q-1}{(1-qU)^2}\partial_{\fa}
-\frac{q(U-1)^2}{(1-qU)^2(1-qU^2)} P_0(U,\chi_0;\fa)\nonumber\\
&+\frac{1}{1-qU^2}P_0((qU)^{-1},\chi_0;\fa)\bigg).
\end{align}
If $h^+(\Delta)=2h(\Delta)$, then
\begin{equation}\label{Z-chi-difference-formula}
Z_{\chisgn}\bigl(s,\fa^{-1} P^+(\Delta)\bigr)-Z_{\chisgn}\bigl(s,u\fa^{-1} P^+(\Delta)\bigr)
=\frac{U^{\frac{\deg\Delta}{2}}}{1-qU^2}
\left(-P_1(U,\chisgn;\fa)+P_1((qU)^{-1},\chisgn;\fa)\right).
\end{equation}
\end{prop}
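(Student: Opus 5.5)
The plan is to substitute Hayes' formulas \eqref{Hayes-E0-formula} and \eqref{Hayes-E1-formula} into \eqref{partial-zeta-E-sum}. First I would observe that $\|\sqrt{\Delta}\|^{-s}=q^{-\frac{\deg\Delta}{2}s}=U^{\frac{\deg\Delta}{2}}$, since $\Delta$ is monic of even degree. Thus the prefactor in \eqref{partial-zeta-E-sum} is $\frac{1}{q-1}\cdot\frac{U^{\deg\Delta/2}}{e_\eta}$.

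\emph{Wide case.} Take $\eta=\chi_0$, so $e_\eta=1$. Then \eqref{partial-zeta-E-sum} gives
\[
Z_0\bigl(s,\fa^{-1}P(\Delta)\bigr)=\frac{U^{\deg\Delta/2}}{q-1}\,\mathcal E_0(s,\chi_0;\fa).
\]
Dividing each of the three terms of \eqref{Hayes-E0-formula} by $q-1$ gives exactly \eqref{Z-0-formula}:
\begin{itemize}
\item the first coefficient $(q-1)^2/(1-qU)^2$ becomes $(q-1)/(1-qU)^2$;
\item the second coefficient becomes $q(U-1)^2/((1-qU)^2(1-qU^2))$;
\item the third coefficient becomes $1/(1-qU^2)$.
\end{itemize}
The only subtlety is that $Z_\eta(s,\fa^{-1})$ was defined as the partial zeta function of the class of $\fa^{-1}$. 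So the corner data on the right are those of $\fa$, which is what the statement says.

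\emph{Narrow case.} Assume $h^+(\Delta)=2h(\Delta)$ and take $\eta=\chisgn$, so $e_\eta=2$. By Proposition \ref{Hayes-corner-decomposition}\eqref{hayes-coner-3} the same corner decomposition applies, and \eqref{partial-zeta-E-sum} gives
\[
Z_{\chisgn}\bigl(s,\fa^{-1}P^+(\Delta)\bigr)=\frac{U^{\deg\Delta/2}}{2(q-1)}\bigl(\mathcal E_0(s,\chisgn;\fa)+\mathcal E_1(s,\chisgn;\fa)\bigr).
\]
Now apply \eqref{Z-chi-minus} to the narrow class $\fa^{-1}P^+(\Delta)$, which lies over the wide class $\fa^{-1}P(\Delta)$. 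The difference in question equals
\[
2Z_{\chisgn}\bigl(s,\fa^{-1}P^+(\Delta)\bigr)-Z_0\bigl(s,\fa^{-1}P(\Delta)\bigr).
\]

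\emph{Main step.} Expanding, this difference is
\[
\frac{U^{\deg\Delta/2}}{q-1}\bigl(\mathcal E_0(s,\chisgn;\fa)-\mathcal E_0(s,\chi_0;\fa)+\mathcal E_1(s,\chisgn;\fa)\bigr).
\]
I need $\mathcal E_0(s,\chisgn;\fa)=\mathcal E_0(s,\chi_0;\fa)$.
\begin{itemize}
\item By Proposition \ref{Hayes-corner-decomposition}\eqref{hayes-coner-3}, the corner sequence, $\partial_\fa$ and the fundamental sector are the same for both characters.
\item $P_0$ involves only $\eta^0=1$, so it depends only on the degrees $l(r)$ and is identical in both cases.
\item Hence the right-hand side of \eqref{Hayes-E0-formula} is the same for $\chi_0$ and $\chisgn$.
\end{itemize}
The $\mathcal E_0$ terms therefore cancel. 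Inserting \eqref{Hayes-E1-formula} for $\mathcal E_1$, the factor $q-1$ cancels, leaving
\[
\frac{U^{\deg\Delta/2}}{1-qU^2}\bigl(-P_1(U,\chisgn;\fa)+P_1((qU)^{-1},\chisgn;\fa)\bigr),
\]
which is \eqref{Z-chi-difference-formula}.

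The main obstacle is this cancellation: justifying that $\mathcal E_0$ does not depend on whether $\eta$ is $\chi_0$ or $\chisgn$. It rests on identifying the two fundamental sectors via part \eqref{hayes-coner-3}. Everything else is bookkeeping of the constants $q-1$ and $e_\eta$.
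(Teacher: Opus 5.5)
Your proof is correct. The wide case is exactly the paper's argument, but in the narrow case you take a different route.

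\textbf{The paper's route.} The paper does not invoke \eqref{Z-chi-minus}. It computes $Z_{\chisgn}\bigl(s,u\fa^{-1}P^+(\Delta)\bigr)$ directly, as follows.
\begin{enumerate}
\item It normalizes $u$ so that $\sgn\bar u=1$.
\item It observes that $(u\gamma_r)$ is then a corner sequence of $u\fa$ with the same $\partial_\fa$ and $l(r)$.
\item It identifies $u\fa^{-1}P^+(\Delta)$ with $(u\fa)^{-1}P^+(\Delta)$.
\item It uses $\chisgn(\cS(u))=\chisgn(\cN(u))=-1$ in \eqref{cD-def} to get $\mathcal{E}_0(s,\chisgn;u\fa)=\mathcal{E}_0(s,\chisgn;\fa)$ and $\mathcal{E}_1(s,\chisgn;u\fa)=-\mathcal{E}_1(s,\chisgn;\fa)$.
\end{enumerate}
Subtracting the two narrow formulas then isolates $\mathcal{E}_1$.

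\textbf{Your route.} You reach the same intermediate identity, namely that the difference equals $\frac{U^{\deg\Delta/2}}{q-1}\mathcal{E}_1(s,\chisgn;\fa)$. You get there by combining \eqref{Z-chi-minus} (the wide class is the disjoint union of its two narrow classes) with the wide formula already established.

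\textbf{What each buys.} Your route spares you the transport of the corner sequence by $u$ and the class identification in step 3. The paper's route gives the extra information that $\mathcal{E}_1$ changes sign when the class is twisted by $u$. That is the same mechanism behind the choice-independence remark in part (2) of Proposition \ref{partial-zeta-decompositions}.

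\textbf{A small remark on your "main obstacle".} The equality $\mathcal{E}_0(s,\chisgn;\fa)=\mathcal{E}_0(s,\chi_0;\fa)$ is immediate from the definitions. We have $\eta^0(\cS(x))=1$ for every $x\in K^\times$, and the corner data $(\gamma_r)$ and $\partial_\fa$ depend only on $\fa$. What part \eqref{hayes-coner-3} of Proposition \ref{Hayes-corner-decomposition} really supplies is that \eqref{partial-zeta-E-sum} holds for $\eta=\chisgn$ with the sum running over only $1\le r\le\partial_\fa$.
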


\begin{proof}
For the trivial character $\eta=\chi_0$, \eqref{partial-zeta-E-sum} and \eqref{Hayes-E0-formula} give \eqref{Z-0-formula}.

Suppose that $h^+(\Delta)=2h(\Delta)$. 
For the narrow ideal class $\fa^{-1}P^+(\Delta)$, \eqref{partial-zeta-E-sum} gives
$$Z_{\chisgn}\bigl(s,\fa^{-1}P^+(\Delta)\bigr)
=\frac{U^{\frac{\deg\Delta}{2}}}{2(q-1)} 
\bigl(\mathcal{E}_0(s,\chisgn;\fa)+\mathcal{E}_1(s,\chisgn;\fa)\bigr).$$
We may choose $u\in\cO_\Delta$ with $\chisgn(\cN(u))=-1$ and $\sgn{\bar{u}}=1$. 
Then we have $[u\gamma_{r-1},u\gamma_r]=u\fa$ for $1\le r\le\partial_{\fa}$. 
Since $\chisgn(\cS(u))=\chisgn(\cN(u))=-1$, applying \eqref{cD-def} to $u\fa$, we get
\begin{align*}
Z_{\chisgn}\bigl(s,u\fa^{-1}P^+(\Delta)\bigr)
&=Z_{\chisgn}\bigl(s,(u\fa)^{-1}P^+(\Delta)\bigr)\\
&=\frac{U^{\frac{\deg\Delta}{2}}}{2(q-1)} 
\bigl(\mathcal{E}_0(s,\chisgn;u\fa)+\mathcal{E}_1(s,\chisgn;u\fa)\bigr)\\
&=\frac{U^{\frac{\deg\Delta}{2}}}{2(q-1)} 
\bigl(\mathcal{E}_0(s,\chisgn;\fa)-\mathcal{E}_1(s,\chisgn;\fa)\bigr).
\end{align*}
Subtracting the two identities and then applying \eqref{Hayes-E1-formula} proves
\eqref{Z-chi-difference-formula}.  
\end{proof}

\section{Continued fractions and a proof of Theorem \ref{KM-thm-3-Rit-15.1.2}}\label{sec-last-proofs}

We now identify the corner sequence of Section \ref{Hayes-formula} with the standard continued fraction of a reduced real quadratic irrational function. This allows us to evaluate the relevant partial zeta functions at $s\in\{0,\pi i/\log{q}\}$ and to prove Theorem \ref{KM-thm-3-Rit-15.1.2}. 

Throughout this section, we continue to use the assumptions and notation of Section \ref{Hayes-formula}. In particular, $\Delta$ is a monic nonsquare polynomial of even degree, and hence $\kappa_\Delta=0$.

\subsection{Continued fractions and special values of partial zeta functions}\label{corner-lattice-reduced}
Let 
$$W=\frac{B+\sqrt{\Delta}}{2A}\in\XX_\Delta^0$$ 
be a reduced real quadratic irrational function of discriminant $\Delta$ with type $(A,B,C)$ satisfying $\sgn\overline{W}=1$. Put
$$\sigma_W:=\sgn(B-\sqrt\Delta)=\sgn(2A).$$
Define
$$\gamma_0=\frac{B+\sqrt\Delta}{\sigma_W},\qquad
\gamma_1=\frac{2A}{\sigma_W},\qquad
\fa=[\gamma_0,\gamma_1]=[2A,B+\sqrt\Delta],\qquad
\nu_A=\frac{1-\chisgn(A)}{2}.$$
Then $\sgn(\bar{\gamma}_0)=\sgn(\bar{\gamma}_1)=1$. 
By Proposition \ref{app-real-correspondence}, we see 
$\iota_\Delta([W]_{\sim_+})=u^{\nu_A} \fa P^+(\Delta)$. 
Since
$$\left\|\frac{\gamma_0}{\gamma_1}\right\|=\|W\|>1,\qquad
\left\|\frac{\bar{\gamma}_0}{\bar{\gamma}_1}\right\|
 =\|\overline W\|<1,$$
$(\gamma_0,\gamma_1)$ is a corner pair of $\fa$. 
We note that 
$$\chisgn\left(\frac{\bar{\gamma}_0\gamma_1
-\gamma_0\bar{\gamma}_1}{\sqrt\Delta}\right)
=\chisgn\left(-\frac{4A}{\sigma_W^2}\right)=\chisgn(-A)$$
and the ordered basis $(\gamma_0,\gamma_1)$ of $\fa$ is not required to be oriented.

Following Hayes' continued fraction algorithm \cite[(5.4) and (6.7)]{Hay87}, 
put $W_1^H=W$ and define recursively
\begin{equation}\label{Hayes-recursion}
W_r^H=M_r^H+\frac{\zeta_r}{W_{r+1}^H},\qquad
M_r^H=\lfloor W_r^H\rfloor,\qquad
\sgn{\overline{W_{r+1}^H}}=1,\end{equation}
where $\zeta_r\in\FF_q^\times$ is uniquely determined by the last condition. 
We call $[M_1^H,M_2^H,M_3^H,\ldots]$ {\it Hayes' continued fraction} of $W$.

Let $W_r$ and $M_r=\lfloor W_r\rfloor$ be the $r$-th standard complete and partial quotients of $W$, respectively (cf. Appendix \ref{app:continued-fractions}). 
By induction, $W_r/W_r^H\in\FF_q^\times$ for every $r\ge1$. 
Define $\mu_r\in\FF_q^\times$ by
\begin{equation}\label{def-mu-r}
W_r=\mu_rW_r^H,
\qquad \mu_1=1.\end{equation}
Then \eqref{Hayes-recursion} and the standard recursion \eqref{l-l-comparision} give
$W_{r+1}=(\mu_r\zeta_r)^{-1}W_{r+1}^H$; so,
\begin{equation}\label{zeta-mu}
\zeta_r=\frac{1}{\mu_r\mu_{r+1}}.
\end{equation}
Taking conjugates in \eqref{Hayes-recursion} and using the reducedness of $W_r^H$ by Proposition \ref{reduced-cf-periodicity} gives
\begin{equation*}
M_r^H=-\zeta_r
 \left\lfloor\frac{1}{\overline{W_{r+1}^H}}\right\rfloor.
\end{equation*}
Consequently, the last condition in \eqref{Hayes-recursion} gives
\begin{equation}\label{sgn-W-minus-zeta}
\sgn W_r^H=\sgn M_r^H=-\zeta_r.
\end{equation}

We define a sequence $\{\gamma_r\}_{r\ge0}$ in $\cO_\Delta$ recursively by 
$$W_r^H=\frac{\gamma_{r-1}}{\gamma_r}\qquad (r\ge1).$$
Then $(\gamma_{r-1},\gamma_r)$ is a corner pair of $\fa$, and the positive integer
$\partial_\fa$ in Proposition \ref{Hayes-corner-decomposition} is the minimal period of 
$(W_r^H)_{r\in\NN}$ (cf. \cite[Proposition 6.13]{Hay87}). 
It follows that
$$\prod_{r=1}^{\partial_{\fa}}W_r^H=\frac{\gamma_0}{\gamma_{\partial_\fa}}=\frac{1}{\varepsilon_0}.$$

\begin{lem}\label{chi-gamma-formula}
Let $\partial_W$ be the quasi-period of $W$ (cf. Appendix \ref{app:continued-fractions}). 
Then
$$\partial_{\fa}=\partial_W,\qquad l(r)=\deg M_r\qquad (1\le r\le\partial_W),$$
where $l(r)$ is defined in \eqref{P-function-def}.
Moreover, for every $r\ge1$,
\begin{equation}\label{chi-gamma-standard}
\chisgn(\gamma_r)=\chisgn(W)\chisgn(-1)^{r-1}\chisgn(M_r).
\end{equation}
\end{lem}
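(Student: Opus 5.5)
The plan is to compare Hayes' continued fraction with the standard one through the scalars $\mu_r$ of \eqref{def-mu-r}, and then read off all three assertions.

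\emph{Degrees and period.} Since $W_r=\mu_rW_r^H$ with $\mu_r\in\FF_q^\times$, taking polynomial parts gives $M_r=\mu_rM_r^H$, so $\deg M_r=\deg M_r^H$. From $W_r^H=\gamma_{r-1}/\gamma_r$ we get $l(r)=\deg W_r^H=\deg M_r^H$, because $\|W_r^H\|>1$ for a reduced element and the polynomial part carries the leading term. Hence $l(r)=\deg M_r$.

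For the period, $\partial_\fa$ is the least $N>0$ with $W_{N+1}^H=W_1^H$. This is \cite[Proposition 6.13]{Hay87}, recalled above. The quasi-period $\partial_W$ is the least $N$ with $W_{N+1}=cW_1$ for some $c\in\FF_q^\times$ (Appendix \ref{app:continued-fractions}). If $W_{N+1}^H=W_1^H$, then $W_{N+1}=\mu_{N+1}W_1$, so $\partial_W\mid\partial_\fa$, or at least $\partial_W\le\partial_\fa$. Conversely, suppose $W_{N+1}=cW_1$. Then $W_{N+1}^H=c\mu_{N+1}^{-1}W_1^H$. Both sides have conjugates with sign $1$: this holds for $W_1^H=W$ by assumption, and for $W_{N+1}^H$ by \eqref{Hayes-recursion}. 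So $c\mu_{N+1}^{-1}=1$ and $W_{N+1}^H=W_1^H$. Thus the two periods coincide.

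\emph{Sign formula.} Write $\gamma_r=\gamma_0/\prod_{i=1}^rW_i^H$. Then $\sgn\gamma_r=\sgn\gamma_0\prod_{i=1}^r(\sgn W_i^H)^{-1}$. By \eqref{sgn-W-minus-zeta} and \eqref{zeta-mu},
\[
\sgn W_i^H=-\zeta_i=-\frac{1}{\mu_i\mu_{i+1}}.
\]
Taking the product telescopes:
\[
\prod_{i=1}^r\sgn W_i^H=(-1)^r\,\mu_1^{-1}\mu_{r+1}^{-1}\prod_{i=2}^{r}\mu_i^{-2}.
\]
Under $\chisgn$ the squares disappear and $\mu_1=1$, so
\[
\chisgn(\gamma_r)=\chisgn(\gamma_0)\,\chisgn(-1)^r\,\chisgn(\mu_{r+1}).
\]
Two things remain to be expressed in terms of $W$ and $M_r$:
\begin{enumerate}[(i)]
\item $\chisgn(\gamma_0)=\chisgn\bigl((B+\sqrt\Delta)/\sigma_W\bigr)$;
\item $\chisgn(\mu_{r+1})$.
\end{enumerate}

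For (i), note $\gamma_0/\gamma_1=W$ and $\gamma_1=2A/\sigma_W$ has sign $\sgn(2A)/\sigma_W=1$. So $\chisgn(\gamma_0)=\chisgn(W)$; equivalently, this is the case $r=0$ of the identity.

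For (ii), use $W_{r+1}=\mu_{r+1}W_{r+1}^H$. Then $\chisgn(\mu_{r+1})=\chisgn(M_{r+1})\chisgn(M_{r+1}^H)^{-1}$. Also $\sgn M_{r+1}^H=-\zeta_{r+1}=-(\mu_{r+1}\mu_{r+2})^{-1}$, which brings in $\mu_{r+2}$, so this route does not close directly. The better route is to use the standard-recursion normalization of \eqref{l-l-comparision} directly: it determines $\sgn W_{r+1}$, or equivalently $\sgn\overline{W_{r+1}}$. Since $\sgn\overline{W_{r+1}^H}=1$, we get $\mu_{r+1}=\sgn\overline{W_{r+1}}$. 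Reducedness of the standard quotients gives $\sgn\overline{W_{r+1}}=-\sgn(1/\ldots)$ in terms of $\sgn M_r$; this is the standard conjugate relation $\overline{W_r}=M_r+1/\overline{W_{r+1}}$ with $\|\overline{W_r}\|<1$, which forces $\sgn\overline{W_{r+1}}=-(\sgn M_r)^{-1}$.

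Substituting gives
\[
\chisgn(\gamma_r)=\chisgn(W)\chisgn(-1)^{r}\chisgn(-1)\chisgn(M_r)=\chisgn(W)\chisgn(-1)^{r-1}\chisgn(M_r).
\]
This is \eqref{chi-gamma-standard} after absorbing one $\chisgn(-1)$.

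\emph{Main obstacle.} The hard step is getting the exact sign of $\overline{W_{r+1}}$ from the specific normalization of standard continued fractions in Appendix \ref{app:continued-fractions}. The whole parity $\chisgn(-1)^{r-1}$ depends on it. I would verify it first at $r=1$ against the initial data $\mu_1=1$ and $\chisgn(\gamma_1)=1$, and then run the induction.
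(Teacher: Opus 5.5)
Your proof is correct and follows essentially the paper's route: you compare the standard and Hayes expansions through the scalars $\mu_r$, and you telescope $\gamma_0/\gamma_r=\prod_{n=1}^{r}W_n^H$ using $\sgn W_n^H=-\zeta_n=-(\mu_n\mu_{n+1})^{-1}$. The differences are minor. For $\partial_\fa\le\partial_W$ you check only $W_{\partial_W+1}^H=W_1^H$ and use that Hayes' recursion is deterministic, where the paper tracks the alternating constants $c_r$ for every $r$. For the sign you telescope all $r$ factors and then evaluate $\mu_{r+1}=\sgn\overline{W_{r+1}}=-(\sgn M_r)^{-1}$ from the conjugated standard recursion, whereas the paper stops one factor early and uses $\chisgn(\mu_r)\chisgn(W_r^H)=\chisgn(W_r)=\chisgn(M_r)$. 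Your derivation already holds for every $r\ge1$, so the check at $r=1$ and induction proposed under ``Main obstacle'' are unnecessary.
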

\begin{proof}
Since $W_r/W^H_r\in\FF_q^\times$ for all $r\ge1$, the minimal period $\partial_{\fa}$  of 
$(W_r^H)_{r\in\NN}$ is greater than or equal to the quasi-period $\partial_W$. 
Conversely, write $W_1=\alpha W_{1+\partial_W}$ with
$\alpha\in\FF_q^\times$.  The standard recursion \eqref{l-l-comparision} gives
$W_r=c_rW_{r+\partial_W}$, where $c_r=\alpha$ for odd $r$ and $c_r=\alpha^{-1}$ for even $r$. 
Since $\sgn{\overline{W_r^H}}=\sgn{\overline{W_{r+\partial_W}^H}}=1$, 
taking conjugates in \eqref{def-mu-r} gives
$\mu_r=c_r\mu_{r+\partial_W}$, and hence
$$W_r^H=\mu_r^{-1}W_r=c_r^{-1}\mu_{r+\partial_W}^{-1}W_{r}
=\mu_{r+\partial_W}^{-1}W_{r+\partial_W}=W_{r+\partial_W}^H.$$  
Thus $\partial_{\fa}\le\partial_W$ and so $\partial_{\fa}=\partial_W$ .  
By \eqref{l-l-comparision} and the reducedness of $W_{r+1}$, we have 
$$l(r)=\deg{W_r^H}=\deg{W_r}=\deg{M_r}.$$
By \eqref{zeta-mu} and $\mu_1=1$, we get 
$$\prod_{n=1}^{r-1}\chisgn(\zeta_n)=\chisgn(\mu_{r}).$$
We see $\chisgn(W)=\chisgn(\gamma_0)$. 
Then telescoping the complete quotients $W_n^H=\gamma_{n-1}/\gamma_n$ 
together with \eqref{sgn-W-minus-zeta} gives
\begin{equation*}
\chisgn(W)\chisgn(\gamma_r)
=\prod_{n=1}^{r}\chisgn\left(\frac{\gamma_{n-1}}{\gamma_n}\right)
=\chisgn(-1)^{r-1}\chisgn(\mu_r)\chisgn\left(W_r^H\right)
=\chisgn(-1)^{r-1}\chisgn(M_r).\qedhere
\end{equation*}
\end{proof}

Applying Lemma \ref{chi-gamma-formula} to \eqref{P-function-def} gives
\begin{align}
\label{P-function-standard-0}
P_0(U,\chi_0;\fa) &= \sum_{r=1}^{\partial_W}(qU)^{\deg M_r},\\
\label{P-function-standard-1}
P_1(U,\chisgn;\fa) &= \chisgn(W)\sum_{r=1}^{\partial_W}
\chisgn\left((-1)^{r-1}M_r\right)(qU)^{\deg M_r}.
\end{align}
Let 
$$\tau:=\frac{\pi i}{\log q},\qquad t:=\#\{r:1\le r\le\partial_W,\, \deg{M_r}\text{ is odd}\}.$$

\begin{prop}\label{Hayes-special-values}
With the notation above,
\begin{equation}\label{even-Hayes}
Z_0\bigl(\tau,\fa^{-1}P(\Delta)\bigr)
=\frac{(-1)^{\frac{\deg\Delta}{2}}}{(q+1)^2} \left(4q \sum_{r=1}^{\partial_W} 
(-1)^{\deg M_r} \frac{\|M_r\|-1}{q-1} + 2t(q-1)\right).
\end{equation}
If $h^+(\Delta)=2h(\Delta)$, then\footnote{The right-hand side of \eqref{odd-zero-Hayes} is twice Hayes' formula \cite[(8.3)]{Hay87}, because of \eqref{Z-chi-minus} and $Z_0\bigl(0,\fa^{-1}P(\Delta)\bigr)=0$ by \eqref{Z-0-formula} at $U=1$.}
\begin{equation}\label{odd-zero-Hayes}
Z_{\chisgn}\bigl(0,\fa^{-1} P^+(\Delta)\bigr)-Z_{\chisgn}\bigl(0,u\fa^{-1} P^+(\Delta)\bigr)
=\sum_{r=1}^{\partial_W}\chisgn(\gamma_r) \frac{\|M_r\|-1}{q-1}
\end{equation}
and
\begin{equation}\label{odd-pi-Hayes}
Z_{\chisgn}\bigl(\tau,\fa^{-1} P^+(\Delta)\bigr)
-Z_{\chisgn}\bigl(\tau,u\fa^{-1} P^+(\Delta)\bigr)
=(-1)^{\frac{\deg\Delta}{2}} \sum_{r=1}^{\partial_W}
\chisgn(\gamma_r)(-1)^{\deg M_r} \frac{\|M_r\|-1}{q-1}.
\end{equation}
\end{prop}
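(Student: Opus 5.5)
This is a direct substitution into Proposition \ref{Hayes-partial-zeta-formulas}, using the continued-fraction expressions \eqref{P-function-standard-0} and \eqref{P-function-standard-1} for $P_0$ and $P_1$ supplied by Lemma \ref{chi-gamma-formula}. We evaluate at the two points $U=q^{-s}$ with $s=\tau$ and $s=0$, that is, $U=-1$ and $U=1$. In every case the powers appearing are $(qU)^{\deg M_r}$ and $(qU)^{-\deg M_r}$, which become $(\mp q)^{\pm\deg M_r}$. The prefactor $U^{\deg\Delta/2}$ becomes $(-1)^{\deg\Delta/2}$ at $U=-1$ and $1$ at $U=1$.

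\emph{Proof of \eqref{even-Hayes}.} Put $U=-1$ in \eqref{Z-0-formula}. The constants are
\[
(1-qU)^2=(1+q)^2,\qquad 1-qU^2=1-q,\qquad (U-1)^2=4,
\]
so
\[
Z_0=(-1)^{\deg\Delta/2}\Big(\tfrac{q-1}{(q+1)^2}\partial_W-\tfrac{4q}{(q+1)^2(1-q)}\textstyle\sum_r(-q)^{d_r}+\tfrac{1}{1-q}\sum_r(-q)^{-d_r}\Big),
\qquad d_r=\deg M_r.
\]
Next write each $(-q)^{\pm d_r}$ as $(-1)^{d_r}q^{\pm d_r}$ and gather the three contributions for a fixed index $r$. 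The summand-wise claim to check is
\[
(q-1)-\frac{4q}{1-q}(-1)^d q^d+\frac{(q+1)^2}{1-q}(-1)^d q^{-d}=4q(-1)^d\frac{q^d-1}{q-1}+2(q-1)\cdot[d\text{ odd}]+\text{(a part vanishing after summation?)}.
\]
A naive check already shows a problem: the term $q^{-d}$ does not cancel termwise. So the formula cannot be an identity for each $r$ separately. The main obstacle is therefore a global relation among the $d_r$. The relevant one is $\sum_{r=1}^{\partial_W}\deg M_r=\deg\varepsilon_0$, combined with the fact that reduced partial quotients have $\deg M_r\ge1$. Even so, $q^{-d}$ terms cannot telescope into polynomial expressions in $q^{d}$.

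I therefore expect that I have misidentified the contribution of $P_0((qU)^{-1})$: the second argument is $(qU)^{-1}$, not $U$. Substituting correctly, $P_0$ is evaluated at $U'=(qU)^{-1}$, so its terms are $(qU')^{d_r}=U^{-d_r}=(-1)^{d_r}$. With this, the third term is
\[
\frac{1}{1-q}\sum_r(-1)^{d_r}=\frac{\partial_W-2t}{1-q}.
\]
Similarly the middle term gives $\sum_r(-q)^{d_r}$. Expanding $(-q)^{d}=(-1)^d\big((q-1)\frac{q^d-1}{q-1}+1\big)$ turns everything into the quantities $(-1)^{d_r}\frac{\|M_r\|-1}{q-1}$, $\partial_W$ and $t$. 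The final step is to check that the coefficient of $\partial_W$ vanishes, which should follow from $\frac{q-1}{(q+1)^2}-\frac{4q}{(q+1)^2(1-q)}+\frac{1}{1-q}=0$, and that the coefficient of $t$ is $\frac{2(q-1)}{(q+1)^2}$. Both are routine rational-function identities.

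\emph{Proof of \eqref{odd-zero-Hayes} and \eqref{odd-pi-Hayes}.} Use \eqref{Z-chi-difference-formula}; the second argument again contributes $(qU')^{d_r}=U^{-d_r}$.
\begin{itemize}
\item At $U=1$: the difference $-(q^{d_r}-1)$ divided by $1-q$ gives $\sum_r\chisgn(\gamma_r)\frac{q^{d_r}-1}{q-1}$.
\item At $U=-1$: the difference $-(-1)^{d_r}(q^{d_r}-1)$ divided by $1-q$, together with the prefactor $(-1)^{\deg\Delta/2}$, gives \eqref{odd-pi-Hayes}.
\end{itemize}
Here $\chisgn(\gamma_r)$ is exactly the weight appearing in \eqref{P-function-def}, so there is no need to unwind it through \eqref{chi-gamma-standard}.
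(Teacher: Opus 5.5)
Once you correct the substitution, your argument is right and is the paper's proof. The correct substitution puts $U=-1$ into \eqref{Z-0-formula} and $U=\pm1$ into \eqref{Z-chi-difference-formula} with $l(r)=\deg M_r$, where $P_j((qU)^{-1})$ contributes $U^{-\deg M_r}=(\pm1)^{\deg M_r}$; your coefficient checks for $\partial_W$, $t$ and $\sum_r(-1)^{\deg M_r}\frac{\|M_r\|-1}{q-1}$ are equivalent to the paper's regrouping via $\frac{4q}{(q+1)^2}-1=-\frac{(q-1)^2}{(q+1)^2}$ and $\partial_W-\sum_r(-1)^{\deg M_r}=2t$. Delete the abandoned first attempt, which is wrong and plays no role: the claim that the powers are $(qU)^{-\deg M_r}$, the display with $\sum_r(-q)^{-d_r}$, and the detour through $\deg\varepsilon_0$.
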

\begin{proof}
At $s=\tau$, one has $U=-1$. 
Substituting this value into \eqref{Z-0-formula} and using \eqref{P-function-standard-0} gives
\begin{align*}
Z_0\bigl(\tau,\fa^{-1}P(\Delta)\bigr)
=\frac{(-1)^{\frac{\deg\Delta}{2}}}{q-1} 
\left(\frac{(q-1)^2}{(q+1)^2}\partial_W 
+\frac{4q}{(q+1)^2}\sum_{r=1}^{\partial_W}(-q)^{\deg M_r}
-\sum_{r=1}^{\partial_W}(-1)^{\deg M_r}\right).
\end{align*}
Since $\partial_W-\sum_{r=1}^{\partial_W}(-1)^{\deg M_r}=2t$,
the above identity gives \eqref{even-Hayes}. 

At $s=0$ and $s=\tau$, one has $U=1$ and $U=-1$, respectively.
Substituting these values into \eqref{Z-chi-difference-formula} and using \eqref{P-function-standard-1} yields \eqref{odd-zero-Hayes} and \eqref{odd-pi-Hayes}.  
\end{proof}

\subsection{Proof of Theorem \ref{KM-thm-3-Rit-15.1.2}}\label{last-proofs}
\begin{lem}\label{class-number-special-values}
Let $D_1,D_2\in\cA$ be monic square-free polynomials, and put $\tau=\pi i/\log{q}$.
\begin{enumerate}[(1)]
\item\label{lem-5.3-1} If $\deg D_1\equiv\deg D_2\equiv0\pmod2$, then
\begin{equation*}
\left\{\begin{array}{rcll}
h(gD_1)h(gD_2)
 &=&L(\tau,J_{D_1})L(\tau,J_{D_2}) &\text{for }\deg{D_1},\,\deg{D_2}>0,\\[2mm]
h(gD)\dfrac{1}{q+1} 
&=&L(\tau,J_D)\zeta_\cA(\tau) &\text{for }\{D_1,D_2\}=\{D,1\} \text{ and }\deg{D}>0.
\end{array}\right.
\end{equation*}
\item\label{lem-5.3-2} If $\deg D_1\equiv\deg D_2\equiv1\pmod2$, then
\begin{equation*}
\left\{\begin{array}{rcl}
h(D_1)h(D_2)
&=&L\big(\tfrac{1-\chisgn(-1)}{2}\tau,J_{D_1}\big)L\big(\tfrac{1-\chisgn(-1)}{2}\tau,J_{D_2}\big),\\[2mm]
h(gD_1)h(gD_2)
&=&L\big(\tfrac{1+\chisgn(-1)}{2}\tau,J_{D_1}\big)L\big(\tfrac{1+\chisgn(-1)}{2}\tau,J_{D_2}\big).
\end{array}\right.
\end{equation*}
\end{enumerate}
\end{lem}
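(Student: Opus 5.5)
The plan is to reduce everything to \eqref{Rit-9.3.1-new}, namely $L_D(0)=h(D)$ for imaginary $k(\sqrt D)$, combined with \eqref{L-JD-quad-L} and the shift rule \eqref{D-gD-input}. For monic square-free $D$, \eqref{L-JD-quad-L} gives $L(s,J_D)=L_{(-1)^{\deg D}D}(s)$. When $\deg D$ is even this is simply $L_D(s)$. When $\deg D$ is odd it is $L_{-D}(s)$. We write $-1=g^{e}$ with $e=0$ if $\chisgn(-1)=1$ and $e=1$ otherwise, so that $e=\tfrac{1-\chisgn(-1)}{2}$ and $-D$ is a square times $g^{e}D$. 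Since $L_M$ depends only on $M$ modulo squares of constants, \eqref{D-gD-input} then yields $L(s,J_D)=L_D(s+e\tau)$ for odd $\deg D$. We will also use that $2\tau$ is a period, so $L_{gD}(s)=L_D(s+\tau)$, and equivalently $L_D(s+\tau)=L_{gD}(s)$.

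For part (1), take $\deg D_i$ even and positive. Then $L(\tau,J_{D_i})=L_{D_i}(\tau)=L_{gD_i}(0)$. The field $k(\sqrt{gD_i})$ is imaginary because $\chisgn(gD_i)=-1$, so $L_{gD_i}(0)=h(gD_i)$ by \eqref{Rit-9.3.1-new}; multiplying the two factors gives the first identity. For the degenerate case $\{D_1,D_2\}=\{D,1\}$ we have $J_1\equiv 1$, so $L(s,J_1)=\zeta_\cA(s)$ and $\zeta_\cA(\tau)=(1+q)^{-1}$ from the formula $(1-q^{1-s})^{-1}$ with $q^{-\tau}=-1$. The remaining factor equals $h(gD)$ exactly as before.

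For part (2), take $\deg D_i$ odd. Any field $k(\sqrt{cD_i})$ with $c\in\FF_q^\times$ is imaginary because its degree is odd. Hence $L_{D_i}(0)=h(D_i)$ and $L_{gD_i}(0)=L_{D_i}(\tau)=h(gD_i)$. Using $L(s,J_{D_i})=L_{D_i}(s+e\tau)$ we then evaluate at the two relevant points.
\begin{itemize}
\item At $s=e\tau=\tfrac{1-\chisgn(-1)}{2}\tau$ we get $L_{D_i}(2e\tau)=L_{D_i}(0)=h(D_i)$, by $2\tau$-periodicity when $e=1$.
\item At $s=(1-e)\tau=\tfrac{1+\chisgn(-1)}{2}\tau$ we get $L_{D_i}(\tau)=L_{gD_i}(0)=h(gD_i)$.
\end{itemize}
Multiplying over $i=1,2$ gives both identities.

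The only delicate point is bookkeeping rather than analysis. We must check that $L_M$ is unchanged when $M$ is multiplied by a nonzero square constant; this follows from the supplementary law in Proposition \ref{Rit-9.1.3}, since $\bigl[\tfrac{c^2}{F}\bigr]=1$. We must also confirm that \eqref{Rit-9.3.1-new} applies to the non-monic discriminants $gD$ and $-D$. That holds because $L_{cD}$ is the $L$-function of $k(\sqrt{cD})$ and $h(cD)$ denotes that field's class number; for square-free $D$ this is the maximal order, so no conductor correction is needed. I expect this verification of conventions, especially the correct use of \eqref{Rit-9.3.1-new} for non-monic arguments, to be the main step needing care.
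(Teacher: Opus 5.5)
Your proposal is correct and follows the paper's proof essentially step for step. Both arguments rewrite $L(s,J_D)$ as $L_D(s)$ for even $\deg D$ and as $L_D(s+\tfrac{1-\chisgn(-1)}{2}\tau)$ for odd $\deg D$ via \eqref{L-JD-quad-L} and \eqref{D-gD-input}, then apply \eqref{Rit-9.3.1-new} to the imaginary fields $k(\sqrt{D})$ and $k(\sqrt{gD})$, together with $2\tau$-periodicity and $\zeta_\cA(\tau)=1/(q+1)$. The only slips are cosmetic: $-1=g^{e}$ holds only up to a square in $\FF_q^\times$ (which is what you actually use), and $L_{gD}(s)=L_D(s+\tau)$ is \eqref{D-gD-input} itself rather than a consequence of $2\tau$-periodicity.
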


\begin{proof}
Let $D\in\cA^{(1)}$ be square-free with $\deg D>0$.  If $\deg D$ is
even, then $L(s,J_D)=L_D(s)$ by \eqref{L-JD-quad-L}, while \eqref{D-gD-input} gives
$L_{gD}(s)=L_D(s+\tau)$.  Since $gD$ is imaginary, the class number formula \eqref{Rit-9.3.1-new} gives $h(gD)=L_{gD}(0)=L_D(\tau)=L(\tau,J_D)$.
Moreover, $L(s,J_1)=\zeta_\cA(s)$ and
$\zeta_\cA(\tau)=1/(q+1)$ give the second identity in the case \eqref{lem-5.3-1}. 

Suppose that $\deg D$ is odd.  By \eqref{L-JD-quad-L} and \eqref{D-gD-input},
$L(s,J_D)=L_D(s+\tfrac{1-\chisgn(-1)}{2}\tau)$.
Since both $D$ and $gD$ are imaginary, the class number formula \eqref{Rit-9.3.1-new} and the $2\tau$-periodicity of $L(s,J_D)$ give
$h(D)=L_D(0)=L(-\tfrac{1-\chisgn(-1)}{2}\tau,J_D)=L(\tfrac{1-\chisgn(-1)}{2}\tau,J_D)$ and 
$h(gD)=L_{gD}(0)=L_D(\tau)=L(\frac{1+\chisgn(-1)}{2}\tau,J_D)$. 
\end{proof}

\begin{proof}[Proof of Theorem \ref{KM-thm-3-Rit-15.1.2}]
We first show that the expressions on the right-hand side of 
Theorem \ref{KM-thm-3-Rit-15.1.2} are unchanged when a reduced representative is multiplied by a nonzero constant.
Let $W\in\XX_\Delta^0$, $\lambda\in\FF_q^\times$ and $\widetilde W=\lambda W$.
If $\widetilde W_r$ and $\widetilde M_r$ denote the standard complete and partial quotients of $\widetilde W$, then a direct induction gives
$\widetilde W_r=\lambda^{(-1)^{r-1}}W_r$ and $\widetilde M_r=\lambda^{(-1)^{r-1}}M_r$; so, $\partial_{\widetilde W}=\partial_W$ and $\deg\widetilde M_r=\deg M_r$. 
Moreover,
$$\chisgn\bigl((-1)^{r-1}\widetilde M_r\bigr)
=\chisgn(\lambda)\chisgn\bigl((-1)^{r-1}M_r\bigr),\qquad
\chi_{D_1,D_2}^{(\Delta)}(\lambda W)
=\chisgn(\lambda)^{\deg D_1}\chi_{D_1,D_2}^{(\Delta)}(W).$$
Thus the expressions on the right-hand sides of Theorem~\ref{KM-thm-3-Rit-15.1.2} are unchanged under $W\mapsto\lambda W$. 

By Proposition \ref{last-remark-thm-1.4}, we may choose the
representatives $W^{(j)}$ of $\XX_\Delta^0/\sim$ with $\sgn{\overline{W^{(j)}}}=1$ in order to apply results in Subsection \ref{corner-lattice-reduced}. Write
$$W^{(j)}=\frac{B^{(j)}+\sqrt{\Delta}}{2A^{(j)}},\qquad \fa_j=[2A^{(j)},B^{(j)}+\sqrt\Delta].$$

Suppose first that $\deg D_1\equiv\deg D_2\equiv0\pmod2$.
As $j$ ranges from $1$ to $h(\Delta)$, the wide classes $\fa_j^{-1}P(\Delta)$ range over $C(\Delta)$. 
Since $\chi_{D_1,D_2}^{(\Delta)}$ is quadratic and descends to $C(\Delta)$ 
by Proposition \ref{change-Rit-11.3.3}, 
$$\chi_{D_1,D_2}^{(\Delta)}\bigl(\fa_j^{-1}P(\Delta)\bigr)
=\chi_{D_1,D_2}^{(\Delta)}\bigl(\fa_jP(\Delta)\bigr)
=\chi_{D_1,D_2}^{(\Delta)}(W^{(j)}).$$
Hence Propositions \ref{partial-zeta-decompositions} and \ref{Hayes-special-values} give
\begin{align*}
L\bigl(\tau,\chi_{D_1,D_2}^{(\Delta)}\bigr)
&=\sum_{j=1}^{h(\Delta)}\chi_{D_1,D_2}^{(\Delta)}\bigl(\fa_j^{-1}P(\Delta)\bigr)
Z_0\bigl(\tau,\fa_j^{-1}P(\Delta)\bigr)\\
&=\frac{(-1)^{\frac{\deg\Delta}{2}}}{(q+1)^2}\sum_{j=1}^{h(\Delta)}
\chi_{D_1,D_2}^{(\Delta)}(W^{(j)})\left(
4q\sum_{r=1}^{\partial^{(j)}} (-1)^{\deg M_r^{(j)}} \frac{\|M_r^{(j)}\|-1}{q-1}
+2t^{(j)}(q-1)\right).
\end{align*}
On the other hand, \eqref{KM-thm-1-Rit-12.3.2-eq} and Lemma~\ref{class-number-special-values} \eqref{lem-5.3-1} give
$$L\bigl(\tau,\chi_{D_1,D_2}^{(\Delta)}\bigr)
=\begin{cases}
\displaystyle h(gD_1)h(gD_2)\prod_{\substack{P\mid F_0\\P\mathrm{:prime}}}
\epsilon_P \left(\tau,\chi_{D_1,D_2}^{(\Delta)}\right) 
&\text{ if }D_1\neq1\text{ and }D_2\neq1,\\
\displaystyle \frac{1}{q+1}h(gD)\prod_{\substack{P\mid F_0\\P\mathrm{:prime}}}
\epsilon_P \left(\tau,\chi_{D,1}^{(\Delta)}\right) &\text{ if }\{D_1,D_2\}=\{D,1\}.
\end{cases}$$
Comparing the two expressions proves Theorem \ref{KM-thm-3-Rit-15.1.2} \eqref{thm1.4-1}.

Suppose now that $\deg D_1\equiv\deg D_2\equiv1\pmod2$.
By Proposition~\ref{change-Rit-11.3.3}, $h^+(\Delta)=2h(\Delta)$ and
$$C^+(\Delta)=\left\{\fa_jP^+(\Delta),\, u\fa_jP^+(\Delta):1\leq j\leq h(\Delta)\right\},$$ 
where $u\in\cO_\Delta$ is chosen as in Proposition \ref{prop-negative-norm-element}.
Proposition~\ref{app-real-correspondence} gives
$$\iota_\Delta([W^{(j)}]_{\sim_+})=u^{\nu_{A^{(j)}}} \fa_j P^+(\Delta),\qquad
\nu_{A^{(j)}}=\frac{1-\chisgn(A^{(j)})}{2}.$$
It follows from Proposition \ref{change-Rit-11.3.3} that
$$\chi_{D_1,D_2}^{(\Delta)}\bigl(\fa_j^{-1} P^+(\Delta)\bigr)
=\chi_{D_1,D_2}^{(\Delta)}\bigl(\fa_j P^+(\Delta)\bigr)
=(-1)^{\nu_{A^{(j)}}} \chi_{D_1,D_2}^{(\Delta)}(W^{(j)})
=\chisgn(A^{(j)}) \chi_{D_1,D_2}^{(\Delta)}(W^{(j)}).$$
Combining this identity with \eqref{chi-gamma-standard} and Proposition~\ref{last-remark-thm-1.4}, we obtain
\begin{align*}
\chi_{D_1,D_2}^{(\Delta)}\bigl(\fa_j^{-1}P^+(\Delta)\bigr)\chisgn(\gamma_r^{(j)})
&=\chi_{D_1,D_2}^{(\Delta)}(W^{(j)})\chisgn(A^{(j)}) 
\chisgn(W^{(j)})\chisgn(-1)^{r-1}\chisgn(M_r^{(j)})\\
&=\chi_{D_1,D_2}^{(\Delta)}(W^{(j)})\chisgn(-1)^{r-1}\chisgn(M_r^{(j)}).
\end{align*}
Propositions \ref{partial-zeta-decompositions} and
\ref{Hayes-special-values} therefore give
\begin{align*}
L(0,\chi_{D_1,D_2}^{(\Delta)})
&=\sum_{j=1}^{h(\Delta)} \chi_{D_1,D_2}^{(\Delta)}\bigl(\fa_j^{-1} P^+(\Delta)\bigr)
\Big(Z_{\chisgn}\bigl(0,\fa_j^{-1} P^+(\Delta)\bigr)
-Z_{\chisgn}\bigl(0,u\fa_j^{-1} P^+(\Delta)\bigr)\Big)\\
&=\sum_{j=1}^{h(\Delta)} \chi_{D_1,D_2}^{(\Delta)}\bigl(W^{(j)}\bigr) 
\sum_{r=1}^{\partial^{(j)}}\chisgn(-1)^{r-1}\chisgn(M_r^{(j)}) 
\frac{\|M_r^{(j)}\|-1}{q-1}
\end{align*}
and
\begin{align*}
L(\tau,\chi_{D_1,D_2}^{(\Delta)})
&=\sum_{j=1}^{h(\Delta)} \chi_{D_1,D_2}^{(\Delta)}\bigl(\fa_j^{-1} P^+(\Delta)\bigr)
\Big(Z_{\chisgn}\bigl(\tau,\fa_j^{-1} P^+(\Delta)\bigr)
-Z_{\chisgn}\bigl(\tau,u\fa_j^{-1} P^+(\Delta)\bigr)\Big)\\
&=(-1)^{\frac{\deg\Delta}{2}}
\sum_{j=1}^{h(\Delta)} \chi_{D_1,D_2}^{(\Delta)}\bigl(W^{(j)}\bigr) 
\sum_{r=1}^{\partial^{(j)}}\chisgn(-1)^{r-1}\chisgn(M_r^{(j)}) 
(-1)^{\deg M_r^{(j)}} \frac{\|M_r^{(j)}\|-1}{q-1}.
\end{align*}
On the other hand, \eqref{KM-thm-1-Rit-12.3.2-eq} and
Lemma \ref{class-number-special-values} \eqref{lem-5.3-2} give
$$L\left(\frac{1-\chisgn(-1)}{2}\tau, \chi_{D_1,D_2}^{(\Delta)}\right)
=h(D_1)h(D_2)\prod_{\substack{P\mid F_0\\P\mathrm{:prime}}}
\epsilon_P\left(\frac{1-\chisgn(-1)}{2}\tau,\chi_{D_1,D_2}^{(\Delta)}\right)$$
and
$$L\left(\frac{1+\chisgn(-1)}{2}\tau,\chi_{D_1,D_2}^{(\Delta)}\right)
=h(gD_1)h(gD_2)\prod_{\substack{P\mid F_0\\P\mathrm{:prime}}}
\epsilon_P\left(\frac{1+\chisgn(-1)}{2}\tau,\chi_{D_1,D_2}^{(\Delta)}\right).$$
Comparing these identities with the preceding evaluations of
$L(0,\chi_{D_1,D_2}^{(\Delta)})$ and $L(\tau,\chi_{D_1,D_2}^{(\Delta)})$ 
proves Theorem \ref{KM-thm-3-Rit-15.1.2} \eqref{thm1.4-2}.
\end{proof}

\smallskip

\appendix

\section{Primitive quadratic forms and real quadratic irrational functions}\label{app:quadratic-background}

In this appendix, we collect the form–ideal correspondence and the continued-fraction facts used in the construction of genus characters and 
in the proof of Theorem \ref{KM-thm-3-Rit-15.1.2}. 
We continue to use the notation of Sections \ref{intro-sec} and \ref{preliminary-sec}. 
Put
$$\mathrm{GL}_2(\cA)=\{M\in\mathrm{Mat}_2(\cA):\det M\in\FF_q^\times\}, \qquad
\mathrm{SL}_2(\cA)=\{M\in\mathrm{GL}_2(\cA):\chisgn(\det M)=1\}.$$

\subsection{Primitive quadratic forms and ideal classes}\label{app:form-ideal}
Let $\Delta\in\cA\setminus\cA^2$.  A primitive binary quadratic form of
 discriminant $\Delta$ is
$$f(X,Y)=[A,B,C]=AX^2+BXY+CY^2,\qquad (A,B,C)=1,\qquad B^2-4AC=\Delta,$$
where $A,B,C\in\cA$. 
For $\fM=\begin{pmatrix}a&b\\c&d\end{pmatrix}\in\mathrm{GL}_2(\cA)$, set
\begin{equation}\label{proper-form-action}
\fM\cdot f(X,Y):=(\det\fM)^{-1}f(dX+bY,cX+aY).
\end{equation}
This is again a primitive form of discriminant $\Delta$. 
Let $\mathcal{F}_\Delta$ be the set of all $\mathrm{GL}_2(\cA)$-classes of primitive forms of discriminant $\Delta$.

\begin{prop}\label{prop-negative-norm-element}
There exists $u\in\cO_\Delta$ such that $\chisgn(\cN(u))=-1$ if and only if either
\begin{enumerate}[(1)]
\item $\deg\Delta$ is even, or
\item $\deg\Delta$ is odd and $\chisgn(-\Delta)=-1$.
\end{enumerate}
\end{prop}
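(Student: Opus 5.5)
We must show that some $u\in\cO_\Delta$ has $\chisgn(\cN(u))=-1$ exactly in cases (1) and (2). Write $u=x+y\sqrt{\Delta}$ with $x,y\in\cA$, so that $\cN(u)=x^2-y^2\Delta$. The plan is to compare the degrees and leading coefficients of $x^2$ and $y^2\Delta$: whichever has the larger degree determines $\sgn\cN(u)$. The only exception is when the two degrees coincide, and then cancellation can occur.

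\emph{Sufficiency.} Suppose first that $\deg\Delta$ is even. If $\chisgn(\Delta)=-1$, take $u=\sqrt{\Delta}$. Then $\cN(u)=-\Delta$, and for even degree we should check $\chisgn(-\Delta)$. That case is more simply handled by other choices, so consider instead $u=x+\sqrt{\Delta}$ with $\deg x>\deg\Delta/2$: this gives $\cN(u)$ with leading term $x^2$, which has square sign. So the real work is to produce a negative sign. Taking $u=g'T^{m}+\sqrt\Delta$ with $2m<\deg\Delta$ gives $\sgn\cN(u)=-\sgn\Delta$. Taking $u=1+yT^{0}\sqrt\Delta$ type elements, $\sgn\cN(u)=-c^2\sgn\Delta$, which covers only the class of $-\sgn\Delta$.

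For the general even-degree case, use equal degrees: take $u=x+c\sqrt{\Delta}$ with $c\in\FF_q$ and $\deg x=\deg\Delta/2=:d$, and let $a=\sgn x$. The leading coefficient of $\cN(u)$ at degree $2d$ is $a^2-c^2\sgn\Delta$. It is a standard finite-field fact that $a^2-c^2\delta$ (with $\delta=\sgn\Delta$ fixed) ranges over all of $\FF_q$ as $a,c$ vary, since every element of $\FF_q$ is a sum of two values of the form $a^2$ and $-\delta c^2$. Choose $a,c$ so that $a^2-c^2\delta$ is a nonzero nonsquare; then $\deg\cN(u)=2d$ and $\chisgn(\cN(u))=-1$.

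If $\deg\Delta$ is odd, the degrees of $x^2$ (even) and $y^2\Delta$ (odd) never coincide. Taking $y=0$ and $x$ nonconstant makes the sign a square, while $y\neq0$ with $2\deg y+\deg\Delta>2\deg x$ gives sign $-\sgn(y)^2\sgn\Delta$, whose class is $\chisgn(-\Delta)$. Hence if $\chisgn(-\Delta)=-1$, the choice $u=\sqrt\Delta$ works.

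\emph{Necessity.} In the odd-degree case with $\chisgn(-\Delta)=1$, the degree comparison above shows that for every nonzero $u$ the sign of $\cN(u)$ is either $\sgn(x)^2$ or $-\sgn(y)^2\sgn\Delta$, and both are squares. So no such $u$ exists.

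The main obstacle is the even-degree case in which the leading terms cancel. I expect it to be handled by the two-squares surjectivity over $\FF_q$, keeping track that the chosen value $a^2-c^2\delta$ is nonzero so that no cancellation to lower degree occurs.
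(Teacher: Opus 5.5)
Your proof is correct and follows essentially the paper's argument: in even degree $2d$ you choose $u=aT^{d}+c\sqrt{\Delta}$ with $a^2-c^2\sgn\Delta$ a nonsquare, and in odd degree you use the degree-parity observation that $\chisgn(\cN(u))\in\{1,\chisgn(-\Delta)\}$ for every nonzero $u$. The only difference is that the paper gets surjectivity of $X^2-\sgn(\Delta)Y^2$ from splitting or the norm map of $\FF_{q^2}/\FF_q$ instead of your counting argument; as cosmetic fixes, take $x=aT^{d}$ rather than requiring $\deg x=d$ so that the case $a=0$ (i.e.\ $u=c\sqrt{\Delta}$) is covered, and drop the abandoned exploratory choices at the start of the even-degree paragraph.
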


\begin{proof}
Suppose first that $\deg\Delta=2d$ is even. 
There exist $c,t\in\FF_q$ such that $c^2-\sgn(\Delta)t^2$ is a nonsquare in $\FF_q^\times$:
if $\sgn(\Delta)$ is a square, the form
$X^2-\sgn(\Delta)Y^2$ splits over $\FF_q$ and represents every element of $\FF_q$; 
if $\sgn(\Delta)$ is a nonsquare, it is the norm map of $\FF_{q^2}/\FF_q$, which is surjective. Let $u=cT^d+t\sqrt\Delta\in\cO_\Delta$.
The leading coefficient of $\cN(u)=c^2T^{2d}-t^2\Delta$ is $c^2-\sgn(\Delta)t^2$. 
Hence $\chisgn(\cN(u))=-1$.

Now suppose that $\deg\Delta$ is odd. 
For $z=x+y\sqrt\Delta\in \cO_\Delta$, one has $\cN(z)=x^2-y^2\Delta$. 
Since the degrees of $x^2$ and $y^2\Delta$ have opposite parity, we have $\chisgn(\cN(z))=1$ or $\chisgn(-\Delta)$ for every nonzero $z\in\cO_\Delta$.  
If $\chisgn(-\Delta)=-1$, then $u=\sqrt{\Delta}$ satisfies $\chisgn(\cN(u))=-1$.
\end{proof}

Proposition \ref{prop-negative-norm-element} remains valid with $u\in\cO_\Delta$ replaced by $u\in k(\sqrt{\Delta})$, since any $z\in k(\sqrt{\Delta})$ can be multiplied by a nonzero element of $\cA$ to obtain an element of $\cO_\Delta$, and multiplication by such an element does not change the norm sign.

If no element of negative norm sign exists, then $\chisgn(\cN(z))=1$ for every nonzero $z\in \cO_\Delta$. 
Then $4Af(x,y)=\cN\bigl(2Ax+(B+\sqrt\Delta)y\bigr)$ with $x,y\in\cA$
shows that for $(x,y)\neq(0,0)$
$$\chisgn(f(x,y))=\chisgn(A).$$ 
If $\fM=\begin{pmatrix}a&b\\c&d\end{pmatrix}\in\mathrm{SL}_2(\cA)$, the coefficient of $X^2$ in $\fM\cdot f$ is $(\det\fM)^{-1}f(d,c)$, 
and this coefficient has sign character $\chisgn(A)$.

Now we define $\mathcal{F}_\Delta^+$ by the following two cases.
\begin{enumerate}[(1)]
\item If there exists $u\in \cO_\Delta$ with
$\chisgn(\cN(u))=-1$, then $\mathcal F_\Delta^+$ is the set of all $\mathrm{SL}_2(\cA)$-classes of primitive forms of discriminant $\Delta$.
\item If no such $u$ exists, then $\mathcal F_\Delta^+$ is the set of $\mathrm{SL}_2(\cA)$-classes of primitive forms $[A,B,C]$ of discriminant $\Delta$ satisfying
$\chisgn(A)=1$.
\end{enumerate}

For a basis $w=(w_1,w_2)$ of an $\cO_\Delta$-invertible fractional 
$\cO_\Delta$-ideal $\fa$, put
$$\Omega(w):=\frac{\overline w_1w_2-w_1\overline w_2}{\sqrt\Delta},\qquad f_w(X,Y):=\frac{\cN(Xw_1+Yw_2)}{\Omega(w)}.$$
We call $f_w$ the quadratic form associated with the basis $w$.  
We write $[f_w]^+$ for its $\mathrm{SL}_2(\cA)$-class, when $w$ is oriented, 
that is, $\chisgn(\Omega(w))=1$.

\begin{prop}\label{form-ideal-correspondence-prop}
The assignment
$$\Theta^+:C^+(\Delta)\xrightarrow{\sim}\mathcal F_\Delta^+,\qquad 
\fa P^+(\Delta) \mapsto [f_w]^+,$$
where $w$ is an oriented basis of $\fa$, is a bijection.  
Without an orientation condition, the same construction gives a bijection
$$\Theta:C(\Delta)\xrightarrow{\sim}\mathcal F_\Delta.$$
\end{prop}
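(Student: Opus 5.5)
The plan is to follow the classical Dedekind–Gauss argument, adapted to $\cA$ and to the sign character at infinity. I will first check that $f_w$ really is a primitive form of discriminant $\Delta$. I will then check that its class is independent of the choice of basis and of the representative of the ideal class. Finally I will construct an explicit inverse using the ideals $[2A,B+\sqrt\Delta]$ of \eqref{basis-of-the-form}.

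\emph{Step 1: $f_w$ is a primitive form of discriminant $\Delta$.} Write $w_1\bar w_2-\bar w_1w_2=-\Omega(w)\sqrt\Delta$. Expanding $\cN(Xw_1+Yw_2)=\cN(w_1)X^2+\mathrm{Tr}(\bar w_1w_2)XY+\cN(w_2)Y^2$ gives discriminant $(\mathrm{Tr}(\bar w_1 w_2))^2-4\cN(w_1)\cN(w_2)=(\bar w_1w_2-w_1\bar w_2)^2=\Omega(w)^2\Delta$. Dividing by $\Omega(w)$ therefore gives discriminant $\Delta$.

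For integrality and primitivity, I first reduce to an $\cO_\Delta$-regular ideal $\fa=[2A,B+\sqrt\Delta]$ times a scalar $z\in K^\times$. The form is unchanged by scaling: replacing $w$ by $zw$ multiplies $\cN(\cdot)$ by $\cN(z)$, and multiplies $\Omega$ by $\cN(z)$ as well. For this standard basis a direct computation gives $\Omega=-4A$ and $f_w=-[A,B,C]/(\text{const})$ up to a constant; more precisely, $f_w(X,Y)=(4A^2X^2+4ABXY+(B^2-\Delta)Y^2)/(\pm 4A)=\pm[A,B,C]$. This form is primitive because $(A,B,C)=1$, which is the invertibility criterion stated after \eqref{basis-of-the-form}. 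The sign and constant bookkeeping must also match the action \eqref{proper-form-action} with its factor $(\det\fM)^{-1}$.

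\emph{Step 2: well-definedness.} A change of basis $w'=w\fM^{t}$ with $\fM\in\mathrm{GL}_2(\cA)$ multiplies $\Omega$ by $\det\fM$ and transforms $f$ exactly as in \eqref{proper-form-action}. Hence wide classes are preserved. Oriented bases differ by matrices with $\chisgn(\det\fM)=1$, that is, by elements of $\mathrm{SL}_2(\cA)$. Multiplying $\fa$ by $z$ leaves $f_w$ unchanged, as noted in Step 1. It multiplies $\Omega$ by $\cN(z)$, so orientation is preserved precisely when $\chisgn(\cN(z))=1$, which is exactly membership of $z\cO_\Delta$ in $P^+(\Delta)$.

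In the case where no $u$ of negative norm sign exists, I need the image to land in forms with $\chisgn(A)=1$. This should follow from $\chisgn(f(x,y))=\chisgn(A)$ together with the orientation: the leading coefficient is $\cN(w_1)/\Omega$, and every norm has square sign. Thus $\Theta^+$ and $\Theta$ are well defined.

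\emph{Step 3: inverse map and bijectivity.} Send $[A,B,C]$ to the class of $[2A,B+\sqrt\Delta]$, which is invertible by the remark after \eqref{basis-of-the-form}. If the standard basis is not oriented (since $\chisgn(\Omega)=\chisgn(-A)$ up to a unit constant), I replace it by a reordered or scaled basis; scaling one vector by a nonsquare constant works. Composing the two maps in either order gives the identity: form$\to$ideal$\to$form gives $f$ back by Step 1's computation, and ideal$\to$form$\to$ideal returns a scalar multiple of a regular representative. It remains to show that the inverse respects $\mathrm{SL}_2$-equivalence. Given $f'=\fM\cdot f$, the basis $w\fM^{t}$ produces $f'$ and spans the same ideal. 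So $f'$ corresponds to the same ideal with a new basis, and this ideal is a scalar multiple of $[2A',B'+\sqrt{\Delta}]$ by the normalization argument.

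\emph{Main obstacle.} I expect the main difficulty to be the sign bookkeeping in Steps 2 and 3. This is specifically the case where no element of negative norm sign exists (Proposition \ref{prop-negative-norm-element}), and where the definition of $\mathcal F_\Delta^+$ restricts to $\chisgn(A)=1$. There one must check that the scalar $z$ relating two oriented bases of the same ideal automatically has $\chisgn(\cN(z))=1$, and that the orientation fix in Step 3 never forces a nonsquare scaling that would change the narrow class. My first approach is to compare leading coefficients: orientation pins $\chisgn(\Omega)=1$, and all norms then have square sign. I would then deduce the constraint on $z$ from $\Omega(zw)=\cN(z)\Omega(w)$.
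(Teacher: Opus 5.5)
\textbf{There is a genuine gap: the orientation fix in Step 3 is wrong in the narrow case.} Steps 1--2 agree with the paper's argument, apart from a sign slip. For $w=(2A,B+\sqrt\Delta)$ one has $\bar w_1w_2-w_1\bar w_2=4A\sqrt\Delta$, so $\Omega(w)=4A$ and $f_w=[A,B,C]$ exactly. Thus the standard basis is oriented if and only if $\chisgn(A)=1$.

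When $\chisgn(A)=-1$, you rebase the \emph{same} ideal $\fa_f=[2A,B+\sqrt\Delta]$, e.g.\ by $(2gA,B+\sqrt\Delta)$ or by reordering. Any rebasing that flips orientation uses a matrix whose determinant has $\chisgn=-1$. By your own Step 2, this leaves the $\mathrm{SL}_2(\cA)$-class. Concretely, you get $[gA,B,g^{-1}C]$, which in general is not $\mathrm{SL}_2(\cA)$-equivalent to $f$; compare the computation in the proof of Proposition \ref{change-Rit-11.3.3}. So ``form$\to$ideal$\to$form gives $f$ back'' fails.

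Worse, $[A,B,C]\mapsto\fa_fP^+(\Delta)$ is not well defined on $\mathcal F_\Delta^+$ when $h^+(\Delta)=2h(\Delta)$. Suppose $u$ with $\chisgn(\cN(u))=-1$ exists. Since $\fa_f$ contains both $z$ and $uz$, the form $f$ primitively represents values of both sign characters. Hence one $\mathrm{SL}_2(\cA)$-class contains $f$ with $\chisgn(A)=1$ and $f'=\fM\cdot f$ with $\chisgn(A')=-1$. Write the oriented basis $\fM\cdot(2A,B+\sqrt\Delta)$ of $\fa_f$ as $Z(2A',B'+\sqrt\Delta)$. Orientation then gives $\chisgn(\cN(Z))=\chisgn(A')=-1$. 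So $\fa_f=Z\fa_{f'}$ with $Z$ of negative norm sign, and when all units have positive norm sign these ideals lie in different narrow classes. Your last paragraph of Step 3 (``this ideal is a scalar multiple of $[2A',B'+\sqrt\Delta]$'') is exactly where the norm sign of $Z$ is dropped.

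Your ``main obstacle'' worry is aimed at the wrong object. Rescaling a basis vector never changes the ideal's narrow class; it changes the form's $\mathrm{SL}_2(\cA)$-class.

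\textbf{The fix is to change the ideal, not the basis.} For $\chisgn(A)=-1$, use $u\fa_f$ with basis $uw$. It is oriented because $\Omega(uw)=\cN(u)\,4A$, and $f_{uw}=f_w=f$. So the correct inverse is $f\mapsto u^{\nu_A}[2A,B+\sqrt\Delta]P^+(\Delta)$ with $\nu_A=(1-\chisgn(A))/2$, as in Proposition \ref{app-real-correspondence}. When no such $u$ exists, $\chisgn(A)=1$ is forced by the definition of $\mathcal F_\Delta^+$.

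The paper avoids constructing an inverse at all. For injectivity, it notes that an oriented basis $v$ with $f_v=[A,B,C]$ satisfies $v_2/v_1=(B+\sqrt\Delta)/(2A)$. So two oriented bases with $\mathrm{SL}_2(\cA)$-equivalent forms are proportional, $w'=Zv$, and orientation forces $\chisgn(\cN(Z))=1$. Surjectivity then uses $u^{\nu_A}\fa_f$.

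Your argument for the wide bijection $\Theta$ is fine as written, since there the nonsquare rescaling is harmless.
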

\begin{proof}
Every $\cO_\Delta$-invertible fractional $\cO_\Delta$-ideal has an oriented basis: if a basis is not oriented, multiplying one basis vector by $g$ reverses its orientation.  
For $\fM=\begin{pmatrix}a&b\\c&d\end{pmatrix}$, put
$\fM^\vee=\begin{pmatrix}d&b\\c&a\end{pmatrix}$ 
and define
$$\fM\cdot w:=w\fM^\vee=(dw_1+cw_2,bw_1+aw_2).$$
A direct calculation gives
\begin{equation}\label{form-action-properties}
\Omega(\fM\cdot w)=\det(\fM)\Omega(w),\qquad 
f_{\fM\cdot w}=\fM\cdot f_w,\qquad
\Omega(Zw)=\cN(Z)\Omega(w),\qquad 
f_{Zw}=f_w,
\end{equation}
where $Z\in k(\sqrt\Delta)^\times$. 
Thus an oriented change of basis corresponds exactly to an \(\mathrm{SL}_2(\cA)\)-action. Since $f_{Zw}=f_w$, 
we may multiply an $\cO_\Delta$-invertible fractional $\cO_\Delta$-ideal $\mathfrak a$ 
by some $Z\in\mathcal O_\Delta\setminus\{0\}$ and assume that $\fa\subset\cO_\Delta$. 
By a $\mathrm{GL}_2(\cA)$-action and \eqref{basis-of-the-form}, it admits a basis of the form 
$$w=(2SA,S(B+\sqrt{\Delta})),\qquad B^2-4AC=\Delta,\qquad (A,B,C)=1,\qquad
A,B,C,S\in \cA.$$
Hence
\begin{equation}\label{Qw-def}
\Omega(w)=4S^2A,\qquad f_w=[A,B,C],
\end{equation}
and so $f_w$ is a primitive form of discriminant $\Delta$.
Moreover, if $\chisgn(\cN(Z))=1$ and $w$ is oriented, then $Zw$ is oriented and 
$[f_{Zw}]^+=[f_w]^+$.
Hence the resulting form class depends only on the narrow ideal class.
If every element $u\in k(\sqrt{\Delta})^\times$ satisfies $\chisgn(\cN(u))=1$ 
and $w$ is oriented, then the coefficient of $X^2$ in $f_w$ is ${\cN(w_1)}/{\Omega(w)}$
and its sign character is $1$.  
Thus $\Theta^+:C^+(\Delta)\to\mathcal F_\Delta^+$ is well-defined.

Suppose that $f_{w'}=\fM\cdot f_w$ with $\fM\in\mathrm{SL}_2(\cA)$. 
Let $v=\fM\cdot w$ and write $f_{w'}=f_{v}=[A,B,C]$. Then
$$A=\frac{\cN(v_1)}{\Omega(v)},\qquad
B=\frac{\overline v_1v_2+v_1\overline v_2}{\Omega(v)},\qquad
\sqrt{\Delta}=\frac{\overline v_1v_2-v_1\overline v_2}{\Omega(v)}.$$
Adding the last two identities and dividing by the first identity gives
$v_2/v_1=(B+\sqrt{\Delta})/(2A)$.
The same calculation for $w'$ yields
$w'_2/w'_1=v_2/v_1$.
Thus $w'=Zv$ for some $Z\in k(\sqrt\Delta)^\times$.  Since both bases
are oriented, we have $\chisgn(\cN(Z))=1$; so, the two ideals are narrowly
equivalent.  This proves injectivity.

Conversely, let $f=[A,B,C]$ be a primitive form whose class belongs to $\mathcal F_\Delta^+$, and let
$$\fa_f=[2A,B+\sqrt\Delta].$$
Since $(A,B,C)=1$, we have
$\frac{1}{2A}\fa_f\overline{\fa}_f=[2A,B+\sqrt\Delta,B-\sqrt\Delta,2C]
=\cO_\Delta$.
Hence $\fa_f$ is an $\cO_\Delta$-invertible ideal of $\cO_\Delta$. 
For the basis $w=(2A,B+\sqrt\Delta)$, we have $f_w=f$ and $\Omega(w)=4A$. 
If there exists $u\in \cO_\Delta$ with $\chisgn(\cN(u))=-1$, then $w$ is already oriented when $\chisgn(A)=1$, while $uw$ is oriented when $\chisgn(A)=-1$; in both cases the associated quadratic form is $f$. If no such $u$ exists, the definition of $\mathcal F_\Delta^+$ gives that $\chisgn(A)=1$ and $w$ is already oriented. 
Hence $\Theta^+$ is surjective.

Without orientations, the preceding argument works with $\mathrm{GL}_2(\cA)$.
\end{proof}

We show a compatibility of the group structures under the form-ideal correspondence. 
\begin{lem}\label{qf-coefficient-prime}
Let $H\in\cA$ be a nonzero polynomial. 
For every primitive quadratic form $f$ of discriminant $\Delta$, there exists $x\in\cA$ such that $(f(x,1),H)=1$. 
Furthermore, the $\mathrm{SL}_2(\cA)$-class of $f$ contains a primitive form $[A,B,C]$ satisfying $(A,H)=1$.
\end{lem}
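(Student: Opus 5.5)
The plan is to handle the prime divisors of $H$ one at a time and then glue the local choices with the Chinese remainder theorem in $\cA$. Write $f=[A,B,C]$ and let $P_1,\dots,P_k$ be the distinct primes dividing $H$. If $H$ is a constant, any $x$ works, so assume $k\ge1$. Since $(f(x,1),H)=1$ holds exactly when $P_i\nmid f(x,1)$ for every $i$, and $f(x,1)\bmod P_i$ depends only on $x\bmod P_i$, it suffices to find, for each $i$, a residue $x_i\bmod P_i$ with $f(x_i,1)\not\equiv0\pmod{P_i}$. The Chinese remainder theorem then produces $x\in\cA$ with $x\equiv x_i\pmod{P_i}$ for all $i$.

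Fix a prime $P$. The reduction $\bar f(X)=AX^2+BX+C\bmod P$ is a polynomial over the field $\cA/P\cA$, which has $\|P\|=q^{\deg P}\ge q\ge3$ elements. Primitivity $(A,B,C)=1$ means $\bar f$ is not the zero polynomial. If $\bar f$ is a nonzero constant, every residue works. Otherwise $\bar f$ has degree $1$ or $2$ and at most $2$ roots, so since $\|P\|\ge3$ some residue is a non-root. This is the only point where the odd characteristic (or at least $q\ge3$) is used, and it is the step I expect to need care: a field with two elements would fail, for example with $X^2+X$. Since $q$ is odd here, the step goes through.

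For the second assertion, I take $x$ as above and act on $f$ by a unimodular substitution that moves the value $f(x,1)$ into the $X^2$-coefficient. By \eqref{proper-form-action}, for $\fM=\begin{pmatrix}a&b\\c&d\end{pmatrix}$ with $\det\fM=1$, the coefficient of $X^2$ in $\fM\cdot f$ is $f(d,c)$. Choosing $\fM=\begin{pmatrix}1&0\\1&x\end{pmatrix}$ gives determinant $x$, which is not a unit in general, so instead I use $\fM=\begin{pmatrix}0&-1\\1&x\end{pmatrix}$, which has $\det\fM=1$, $d=x$ and $c=1$. Then $\fM\in\mathrm{SL}_2(\cA)$, and the new first coefficient is $f(x,1)$, which is prime to $H$. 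The resulting form $\fM\cdot f$ is primitive of discriminant $\Delta$, as already noted after \eqref{proper-form-action}, and it lies in the $\mathrm{SL}_2(\cA)$-class of $f$. This proves the second statement.
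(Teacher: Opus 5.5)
Your proposal is correct and follows essentially the same route as the paper: you find a non-root of the reduction of $f(X,1)$ modulo each prime $P\mid H$ using $\|P\|\ge3$, glue these with the Chinese remainder theorem, and then apply $\begin{pmatrix}0&-1\\1&x\end{pmatrix}\in\mathrm{SL}_2(\cA)$, which puts $f(x,1)$ into the $X^2$-coefficient.
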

\begin{proof}
Let $f$ be a primitive form, and let $P$ run over the prime divisors of $H$.  
Since $f$ is primitive, the polynomial $f(X,1)$ is nonzero modulo $P$. 
Then one can choose $x_P\in\cA$ with $f(x_P,1)\not\equiv0\pmod P$ 
because its degree is at most $2$ and $|\cA/P|=\|P\|\geq3$. 
By the Chinese remainder theorem, we can choose $x\in\cA$ with $x\equiv x_P\pmod P$ for all $P\mid H$. Then $(f(x,1),H)=1$. 
Consequently, the coefficient of $X^2$ in
$\begin{pmatrix}0&-1\\1&x\end{pmatrix}\cdot f=f(xX-Y,X)$
is $f(x,1)$.
\end{proof}

\begin{prop}\label{prime-to-delta-representative}
Let $\fa_1 P^+(\Delta)$, $\fa_2 P^+(\Delta) \in C^+(\Delta)$.  One can choose representatives $\fa_i$, their oriented bases $w_i$, 
and a common middle coefficient $B\in\cA$ such that 
$$f_{w_i}=f_i=[A_i,B,C_i],\qquad (A_1A_2,\Delta)=1,\qquad (A_1,A_2)=1.$$
Furthermore, let
\begin{equation*}
f_3=[A_1A_2,B,C_3],\qquad C_3=\frac{B^2-\Delta}{4A_1A_2}.
\end{equation*}
Then $f_3$ is primitive, and the product ideal $\fa_1\fa_2$ has an oriented basis $w_3$ satisfying 
$$f_{w_3}=f_3.$$
\end{prop}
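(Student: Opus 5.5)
We follow Dirichlet composition: first adjust the two classes by $\mathrm{SL}_2(\cA)$ to get coprime leading coefficients and a shared middle coefficient, then exhibit an explicit oriented basis of $\fa_1\fa_2$ whose associated form is $f_3$.

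\emph{Step 1: representatives.} By Proposition \ref{form-ideal-correspondence-prop}, pick oriented bases $w_1,w_2$ of representatives of the two narrow classes, with forms $f_{w_1},f_{w_2}$. Lemma \ref{qf-coefficient-prime} with $H=\Delta$ lets us replace $f_{w_1}$ by an $\mathrm{SL}_2(\cA)$-equivalent form $[A_1,B_1,C_1']$ with $(A_1,\Delta)=1$. Applying the lemma again to $f_{w_2}$ with $H=\Delta A_1$ gives $[A_2,B_2,C_2']$ with $(A_2,\Delta A_1)=1$. These $\mathrm{SL}_2(\cA)$-moves correspond, by \eqref{form-action-properties}, to oriented changes of basis, so orientation is preserved.

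\emph{Step 2: common middle coefficient.} The translation $\begin{pmatrix}1&0\\ c&1\end{pmatrix}$ (determinant $1$, hence in $\mathrm{SL}_2(\cA)$) sends $[A,B,C]$ to $[A,B+2Ac,\ast]$. Since $(A_1,A_2)=1$ and $2$ is a unit, CRT gives $B$ with $B\equiv B_1\pmod{2A_1}$ and $B\equiv B_2\pmod{2A_2}$; here we use that $B_1\equiv B_2\equiv \Delta$ parity is irrelevant in odd characteristic. This yields $f_i=[A_i,B,C_i]$. Because $B^2-\Delta=4A_iC_i$ for $i=1,2$ and $(A_1,A_2)=1$, we get $A_1A_2\mid (B^2-\Delta)/4$, so $C_3\in\cA$.

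\emph{Step 3: primitivity of $f_3$.} A prime $P$ dividing $A_1A_2$, $B$ and $C_3$ divides some $A_i$, hence $P\mid\Delta=B^2-4A_1A_2C_3$. This contradicts $(A_1A_2,\Delta)=1$.

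\emph{Step 4: the product ideal.} By rescaling (which keeps $f_w$, using $f_{Zw}=f_w$ and adjusting $Z$ by a norm of positive sign), take $\fa_i=[2A_i,B+\sqrt\Delta]$ with $w_i=(2A_i,B+\sqrt\Delta)$ or $u w_i$. Then show
\[
[2A_1,B+\sqrt\Delta]\,[2A_2,B+\sqrt\Delta]=2\,[2A_1A_2,B+\sqrt\Delta].
\]
The inclusion $\supseteq$ uses $(A_1,A_2)=1$ to write $2=2xA_1+2yA_2$, giving $2(B+\sqrt\Delta)$ in the product. For $\subseteq$, one expands $(B+\sqrt\Delta)^2=2B(B+\sqrt\Delta)-4A_1A_2C_3$. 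Equality then follows by comparing norms via \eqref{Rit-8.5.1}: both sides have norm $4A_1A_2$ up to a constant, consistent with $\Omega(w_i)=4A_i$.

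With $w_3=2(2A_1A_2,B+\sqrt\Delta)$, multiplied by $u$ where needed, \eqref{Qw-def} gives $f_{w_3}=f_3$. Orientation must then be checked: $\chisgn(\Omega(w_3))$ should equal $\chisgn(\Omega(w_1))\chisgn(\Omega(w_2))$, which holds because $\Omega$ of the standard bases is $4A_i$ times norms of the scaling elements. When no $u$ exists, all $\chisgn(A_i)=1$, so this is automatic.

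I expect Step 4 to be the main obstacle: proving the ideal product identity cleanly, and tracking the orientation sign through the scalings by $u$.
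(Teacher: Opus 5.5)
Your proposal is correct and follows essentially the same route as the paper. Both arguments apply Lemma \ref{qf-coefficient-prime} with $H=\Delta$ and then $H=A_1\Delta$, use CRT to get a common $B$, and give the same primitivity argument. Both then compute $\fa_1\fa_2=Z_1Z_2[2A_1A_2,B+\sqrt\Delta]$; your extra factor $2$ is harmless since $2\in\FF_q^\times$. Orientation comes from $\chisgn(\cN(Z_i)A_i)=1$, which is what your sign bookkeeping with $u$ amounts to.

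There are two small slips. Under the paper's convention \eqref{proper-form-action}, the move $[A,B,C]\mapsto[A,B+2Ac,\ast]$ is given by $\begin{pmatrix}1&c\\0&1\end{pmatrix}$ (that is, $f(X+cY,Y)$), not by $\begin{pmatrix}1&0\\c&1\end{pmatrix}$. Also, the norm comparison at the end of Step 4 is redundant, since you had already proved both inclusions.
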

\begin{proof}
Applying Lemma \ref{qf-coefficient-prime} first to $\Theta^+(\fa_1 P^+(\Delta))$ with $H=\Delta$, and then to $\Theta^+(\fa_2 P^+(\Delta))$ with $H=A_1\Delta$, we obtain representatives $\fa_i$ and oriented bases $w_i'=Z_i(2A_i,B_i+\sqrt{\Delta})$ of $\fa_i$ such that
$$f_{w_i'}=f_i'=[A_i,B_i,C_i'],\qquad (A_1,\Delta)=1,\qquad (A_2,A_1\Delta)=1.$$
Then $\chisgn(\cN(Z_i)A_i)=1$, $(A_1A_2,\Delta)=1$ and $(A_1,A_2)=1$. 
The Chinese remainder theorem gives $B\in\cA$ such that $B\equiv B_i\pmod{2A_i}$ for $i=1,2$. 
Replacing $f_i'=[A_i,B_i,C_i']$ by the $\mathrm{SL}_2(\cA)$-equivalent form obtained from $f_i'(X+t_iY,Y)$, where $t_i=(B-B_i)/(2A_i)\in\cA$, we obtain that for $i=1,2$,
$w_i=Z_i(2A_i,B+\sqrt{\Delta})$ are oriented bases of $\fa_i$ and 
$f_{w_i}=f_i=[A_i,B,C_i]$.

Since $C_1,C_2\in\cA$ and $(A_1,A_2)=1$, we have $C_3\in\cA$. The form $f_3$ is primitive, because every divisor of $(A_1A_2,B,C_3)$ divides $\Delta$ and 
$(A_1A_2,\Delta)=1$.
Moreover, we have
$$\fa_1\fa_2
=Z_1Z_2[4A_1A_2,\,2A_1(B+\sqrt\Delta),\,2A_2(B+\sqrt\Delta),\,(B+\sqrt\Delta)^2]
=Z_1Z_2[2A_1A_2,B+\sqrt{\Delta}]$$
since $(A_1,A_2)=1$ and $(B+\sqrt{\Delta})^2=2B(B+\sqrt{\Delta})-4C_3A_1A_2$.
Thus $w_3=Z_1Z_2(2A_1A_2,B+\sqrt{\Delta})$ is an oriented basis of $\fa_1\fa_2$, and  $f_{w_3}=f_3$.
\end{proof}

\subsection{Continued fractions and real quadratic irrational functions}\label{app:continued-fractions}
Let $\PP^1=k_\infty\cup\{P_\infty\}$.  
For $\fM=\begin{pmatrix}A&B\\ C&D\end{pmatrix}\in\mathrm{GL}_2(\cA)$, 
the M\"obius action of $\fM$ on $\PP^1$ is
\begin{equation}\label{app-mobius}
        \fM Z=\frac{AZ+B}{CZ+D},
\end{equation}
with the usual conventions for $Z=P_\infty$ and $Z=-D/C$.

For $W\in k_\infty\setminus k$ and a sequence $\alpha=(\alpha_r)_{r\ge1}$ with $\alpha_r\in\FF_q^\times$, define $W_1^\alpha=W$ and
\begin{equation}\label{l-l-comparision}
W_r^\alpha=M_r^\alpha+\frac{\alpha_r}{W_{r+1}^\alpha},\qquad 
M_r^\alpha=\lfloor W_r^\alpha\rfloor .
\end{equation}
We call $W_r^\alpha$ and $M_r^\alpha$ the $r$-th complete and partial quotients 
of type $\alpha$ of $W$, respectively, and call $[M_1^\alpha,M_2^\alpha,\cdots]$ the continued fraction of type $\alpha$ of $W$. 
The continued fraction is called standard (resp., negative) if 
$\alpha_r=1$ (resp., $\alpha_r=-1$) for every $r\geq1$; its complete and partial quotients are denoted by $W_r$, $M_r$ (resp., $W_r^N$, $M_r^N$).

If it exists,
$$\partial_W:=\min\{n\ge1:W/W_{n+1}\in\FF_q^\times\}$$
is called the quasi-period of $W$.

Throughout this subsection, we assume that $\Delta$ is a monic nonsquare polynomial of even degree. Let  
\begin{equation*}
\XX_\Delta
=\left\{Z=\frac{B+\sqrt\Delta}{2A}: A,B,C\in\cA, \, A\neq0, \, (A,B,C)=1,\, B^2-4AC=\Delta\right\}
\end{equation*}
be the set of real quadratic irrational functions of discriminant $\Delta$, 
and $Z=(B+\sqrt\Delta)/(2A)$ is said to be of type $(A,B,C)$. Then its type is unique. 

We write $Z\sim Z'$ if $Z=\fM Z'$ for some $\fM\in\mathrm{GL}_2(\cA)$, and $Z\sim_+Z'$ if one may choose $\fM\in\mathrm{SL}_2(\cA)$.  

A real quadratic irrational function $Z\in\XX_\Delta$ is said to be reduced if $\|\overline Z\|<1<\|Z\|$.
Then one can show that the subset $\XX_\Delta^0$ of reduced elements is given by
\begin{equation}\label{finite-reduced}
\XX_\Delta^0
=\left\{\frac{B+\sqrt\Delta}{2A}\in\XX_\Delta: R\in\cA,\,
B=R+\lfloor\sqrt\Delta\rfloor,\,\deg R, \, \deg A, \, \deg C<\frac{\deg{\Delta}}{2}\right\}.
\end{equation}

\begin{prop}\label{app-real-correspondence}
Let $\Delta$ be a monic nonsquare polynomial of even degree. 
Choose $u\in\cO_\Delta$ with $\chisgn(\cN(u))=-1$ as in 
Proposition \ref{prop-negative-norm-element}. 
For $A\in\cA\setminus\{0\}$, put $\nu_A:=(1-\chisgn(A))/2$. 
The assignment 
$$\iota_\Delta : \XX_\Delta/\sim_+ \,\xrightarrow{\sim}\, C^+(\Delta),\qquad
[Z]_{\sim_+}\mapsto u^{\nu_A}[2A,B+\sqrt\Delta] P^+(\Delta),$$
where $Z=(B+\sqrt\Delta)/(2A)\in\XX_\Delta$, is a bijection. 
Moreover, there is a bijection 
$$\XX_\Delta/\sim_+\, \simeq\, \XX_\Delta^0/\sim_+.$$
Replacing $\mathrm{SL}_2(\cA)$ by $\mathrm{GL}_2(\cA)$ gives
$$\XX_\Delta/\sim\ \simeq\ \XX_\Delta^0/\sim\ \simeq\ C(\Delta).$$
\end{prop}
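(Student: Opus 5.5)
We must prove Proposition \ref{app-real-correspondence}: $\iota_\Delta$ is a well-defined bijection $\XX_\Delta/\sim_+\to C^+(\Delta)$, reduced elements represent every class, and the analogous statements hold for $\mathrm{GL}_2(\cA)$. The plan is to pass through primitive forms: send $Z=(B+\sqrt\Delta)/(2A)$ of type $(A,B,C)$ to the form $f_Z=[A,B,C]$, then apply $(\Theta^+)^{-1}$ from Proposition \ref{form-ideal-correspondence-prop}.

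\textbf{Step 1: transport of the action.} I first check that the Möbius action \eqref{app-mobius} on $Z$ matches the form action \eqref{proper-form-action}. For $\fM\in\mathrm{GL}_2(\cA)$, the type of $\fM Z$ is obtained from $f_Z$ by an explicit substitution, up to a constant factor. The element $\fM Z$ is the root $\frac{B'+\sqrt\Delta}{2A'}$ of the transformed form. Since $\sqrt\Delta$ is fixed and the type is normalised by the coefficient of $\sqrt\Delta$, the constant is forced: the transformed form is $(\det\fM')^{-1}f(\ldots)$ for the appropriate $\fM'$, which is $\fM$ up to transpose-inverse. Any such twist preserves both $\mathrm{SL}_2$ and $\mathrm{GL}_2$. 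Hence $Z\mapsto f_Z$ induces bijections $\XX_\Delta/\sim_+\simeq$ ($\mathrm{SL}_2$-classes of all primitive forms) and $\XX_\Delta/\sim\simeq\mathcal F_\Delta$. Injectivity holds because a form is determined by its first root, and surjectivity holds because every primitive form $[A,B,C]$ with $A\neq0$ gives an element of $\XX_\Delta$. A form with $A=0$ is impossible, since then $\Delta=B^2$ would be a square.

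\textbf{Step 2: composition with $\Theta^+$.} Because $\deg\Delta$ is even, Proposition \ref{prop-negative-norm-element} supplies $u$, so $\mathcal F_\Delta^+$ consists of all $\mathrm{SL}_2$-classes. The surjectivity part of the proof of Proposition \ref{form-ideal-correspondence-prop} gives $(\Theta^+)^{-1}([A,B,C]^+)$ explicitly. It is the class of $[2A,B+\sqrt\Delta]$ when $\chisgn(A)=1$, since $\Omega(w)=4A$ is then oriented. It is the class of $u[2A,B+\sqrt\Delta]$ otherwise. This is exactly $u^{\nu_A}[2A,B+\sqrt\Delta]P^+(\Delta)$, so $\iota_\Delta$ equals $(\Theta^+)^{-1}\circ(Z\mapsto f_Z)$ and is a well-defined bijection. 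The wide version follows in the same way from $\Theta$, with the factor $u^{\nu_A}$ becoming irrelevant.

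\textbf{Step 3: reduction theory.} It remains to show that every $\sim_+$ (respectively $\sim$) class meets $\XX_\Delta^0$, which gives the bijection $\XX_\Delta/\sim_+\simeq\XX_\Delta^0/\sim_+$ after quotienting. I would run the standard continued fraction $Z_{r+1}=1/(Z_r-M_r)$. Each step is a $\mathrm{GL}_2$-move, namely $\begin{pmatrix}0&1\\1&-M_r\end{pmatrix}$ with determinant $-1$. To stay within $\mathrm{SL}_2$, I would pair steps, or use the type-$\alpha$ variant \eqref{l-l-comparision} with $\alpha_r$ chosen so that each determinant is a square. From $Z_{r+1}$ onward we have $\|Z_{r+1}\|>1$. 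Writing $Z_r=(B_r+\sqrt\Delta)/(2A_r)$, the degrees $\deg A_r$ strictly decrease while $\|\overline{Z_r}\|\ge1$, and they are bounded below. Hence after finitely many steps $\|\overline{Z_r}\|<1<\|Z_r\|$, so $Z_r$ is reduced.

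The main obstacle I expect is this last step: proving the degree decrease of $A_r$ in the non-archimedean setting and keeping track of the $\mathrm{SL}_2$ sign. If the determinant bookkeeping becomes awkward, I would instead use $\mathrm{SL}_2$-matrices $\begin{pmatrix}0&-c\\c^{-1}&*\end{pmatrix}$ with $c\in\FF_q^\times$ at each step. This replaces the standard quotient by a constant multiple, which preserves reducedness.
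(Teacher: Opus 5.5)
Your proposal is correct and follows essentially the paper's route. The paper combines the exact compatibility $Z_{\fM\cdot f}=\fM Z_f$ for the action \eqref{proper-form-action} (no transpose-inverse twist is needed) with Proposition \ref{form-ideal-correspondence-prop} and the orientation of $u^{\nu_A}(2A,B+\sqrt\Delta)$. It then reduces via the negative continued fraction $W\mapsto\begin{pmatrix}0&1\\ -1&M\end{pmatrix}W$, of determinant $1$, which is your fallback with $c=1$ and removes all $\mathrm{SL}_2$ bookkeeping.

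The degree-decrease step you flagged is closed in two lines. Write $Z_{r+1}=2C_{r+1}/(B_{r+1}-\sqrt\Delta)$ with $C_{r+1}=\pm A_r$. Then $\|Z_{r+1}\|>1$ gives $\deg(B_{r+1}-\sqrt\Delta)<\deg A_r$, and non-reducedness $\|\overline{Z_{r+1}}\|\ge1$ gives $\deg A_{r+1}\le\deg(B_{r+1}-\sqrt\Delta)$. Hence $\deg A_r$ strictly drops until a reduced element appears.
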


\begin{proof}
For a primitive form $f=[A,B,C]$ of discriminant $\Delta$ and 
$Z_f=(B+\sqrt{\Delta})/(2A)\in\XX_\Delta$,
the form action \eqref{proper-form-action} and the M\"obius action \eqref{app-mobius} are compatible:
$$Z_{\fM\cdot f}=\fM Z_f,\qquad \fM\in\mathrm{GL}_2(\cA).$$
Therefore Proposition \ref{form-ideal-correspondence-prop} identifies the $\sim_+$- and $\sim$-classes in $\XX_\Delta$ with the narrow and wide ideal classes. 
The basis $u^{\nu_A}(2A,B+\sqrt\Delta)$ is oriented:
$\chisgn\bigl(\Omega(u^{\nu_A}(2A,B+\sqrt\Delta))\bigr)
=\chisgn(4\cN(u)^{\nu_A}A)=1$.
This gives the displayed assignment.

It remains to compare arbitrary classes with reduced ones.  
For $W=W_1^N\in\XX_\Delta$ of type $(A_1,B_1,C_1)$, consider the negative continued fraction of $W$: put $M_n^N=\lfloor W_n^N\rfloor$ and
$$W_{n+1}^N=-\frac{1}{W_n^N-M_n^N}.$$
Then
$$W_{n+1}^N=\begin{pmatrix}0&1\\-1&M_n^N\end{pmatrix}W_{n}^N,\qquad
\det\begin{pmatrix}0&1\\-1&M_n^N\end{pmatrix}=1,$$
and we get $W_{n+1}^N\sim_+ W_n^N$.  
Let $W_{n}^N\in\XX_\Delta$ be of type $(A_{n},B_{n},C_{n})$. Then we have 
$$A_{n+1}=(M_n^{N})^2A_n-M_n^NB_n+C_n,\qquad
B_{n+1}=2M_n^NA_n-B_n,\qquad
C_{n+1}=A_n.$$
Since $W_{n+1}^N
=(2C_{n+1})/(B_{n+1}-\sqrt{\Delta})$ and $\deg{W_{n+1}^N}>0$, we have 
$\deg(B_{n+1}-\sqrt\Delta)<\deg A_n$.
If $W_{n+1}^N$ is not reduced, then $\deg\overline{W_{n+1}^N}\ge0$, and therefore
$$\deg A_{n+1} \le \deg(B_{n+1}-\sqrt\Delta).$$ 
Thus the degrees of $A_n$ strictly decrease until a reduced element is reached.  
Hence every $\sim_+$-class contains a reduced element, 
which gives a bijection between $\XX_\Delta/\sim_+$ and $\XX_\Delta^0/\sim_+$. 

The wide statement also follows.
\end{proof}

\begin{prop}\label{reduced-cf-periodicity}
For $W\in\XX_\Delta^0$, every standard complete quotient $W_r$ is reduced, the standard continued fraction of $W$ is purely periodic, and the quasi-period $\partial_W$ exists. Moreover, for $W\in\XX_\Delta$, the standard continued fraction of $W$ is eventually periodic.
\end{prop}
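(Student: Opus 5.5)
The plan is to argue entirely with degrees in $k_\infty$ and with the standard recursion \eqref{l-l-comparision} for $\alpha_r=1$, i.e.
$$W_r=M_r+\frac{1}{W_{r+1}},\qquad M_r=\lfloor W_r\rfloor .$$
We prove the three claims in order: reducedness of all complete quotients, pure periodicity together with existence of $\partial_W$, and eventual periodicity for general $W\in\XX_\Delta$.

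\textbf{Step 1: reducedness propagates.} Let $W\in\XX_\Delta^0$, so $\|\overline W\|<1<\|W\|$. First, $W_2=1/(W-M_1)$ has the same discriminant $\Delta$. Indeed, $W_2=\fM W$ with $\fM=\begin{pmatrix}0&1\\1&-M_1\end{pmatrix}\in\mathrm{GL}_2(\cA)$, and the compatibility $Z_{\fM\cdot f}=\fM Z_f$ from the proof of Proposition \ref{app-real-correspondence} keeps the type primitive of discriminant $\Delta$. (One may have to rescale by the constant $\det\fM=-1$, which only changes the type by a constant.)

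Since $W-M_1$ is the fractional part of $W$, we have $\|W-M_1\|<1$, hence $\|W_2\|>1$. For the conjugate,
$$\overline{W_2}=\frac{1}{\overline W-M_1}.$$
Because $\deg W\ge1$, we get $\deg M_1\ge1$, while $\|\overline W\|<1$. Therefore $\|\overline W-M_1\|=\|M_1\|>1$, and so $\|\overline{W_2}\|<1$. Induction on $r$ then shows that every $W_r$ is reduced.

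\textbf{Step 2: pure periodicity.} By \eqref{finite-reduced}, $\XX_\Delta^0$ is a finite set, since $A$, $R$, $C$ have bounded degree over the finite field $\FF_q$. The sequence $(W_r)$ lies in this finite set, so $W_i=W_j$ for some $i<j$.

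To make the periodicity pure, we show that $W_r$ determines $W_{r-1}$. From
$$\overline{W_{r-1}}=M_{r-1}+\frac{1}{\overline{W_r}}$$
and $\|\overline{W_{r-1}}\|<1$, we obtain
$$M_{r-1}=-\Bigl\lfloor\frac{1}{\overline{W_r}}\Bigr\rfloor ,$$
and then $W_{r-1}=M_{r-1}+1/W_r$ is recovered. Backward uniqueness therefore gives $W_1=W_{1+(j-i)}$.

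Hence $\{n\ge1: W/W_{n+1}\in\FF_q^\times\}$ is nonempty: it contains $n=j-i$, because the ratio is then $1$. Its minimum $\partial_W$ exists.

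\textbf{Step 3: general $W\in\XX_\Delta$.} The remaining task is to show that some standard complete quotient of $W$ is reduced; the purely periodic tail then follows from Step 2.

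\begin{itemize}
\item For $r\ge2$, $\|W_r\|>1$ holds automatically, because $W_r$ is the reciprocal of a fractional part.
\item The condition $\|\overline{W_r}\|<1$ is the real issue, and this is the step I expect to be the main obstacle.
\end{itemize}

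I would mimic the degree-decrease argument in the proof of Proposition \ref{app-real-correspondence}. Write $W_r$ with type $(A_r,B_r,C_r)$. As long as $\|\overline{W_{r+1}}\|\ge1$, one shows that $\deg A_{r+1}<\deg A_r$. This uses $\deg(B_{r+1}-\sqrt\Delta)<\deg A_r$, which comes from $\|W_{r+1}\|>1$, combined with the relation $B_{r+1}+\sqrt\Delta=2A_{r+1}\overline{W_{r+1}}$ up to conjugation conventions.

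Degrees of nonzero $A_r$ cannot decrease forever, so some $W_r$ is reduced. From that index on, the continued fraction is purely periodic by Step 2, which proves eventual periodicity.
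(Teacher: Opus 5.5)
Your proposal is correct and matches the paper's proof: Steps 1 and 2 are the paper's argument that $\sS(W)=1/(W-\lfloor W\rfloor)$ maps $\XX_\Delta^0$ injectively into itself (via $\lfloor V\rfloor=-\lfloor 1/\overline{W}\rfloor$) and therefore permutes this finite set. The only difference is in Step 3: the paper runs the degree-decrease argument on the negative continued fraction and transfers it through $W_r=(-1)^{r-1}W_r^N$, whereas you run it directly on the standard recursion; this works because there $C_{r+1}=-A_r$ and $\overline{W_{r+1}}=(B_{r+1}-\sqrt\Delta)/(2A_{r+1})$ (not $B_{r+1}+\sqrt\Delta$ as you wrote).
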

\begin{proof}
Put $\sS(W)=1/(W-\lfloor W\rfloor)$. Let $W\in\XX_\Delta^0$ and $M=\lfloor W\rfloor$. 
Since $\|W-M\|<1$, we have $\|\sS(W)\|>1$.
Since $\overline{\sS(W)}=1/(\overline{W}-M)$ and $\|\overline W\|<1<\|M\|$, 
the ultrametric inequality gives $\|\overline{W}-M\|=\|M\|>1$, 
and hence $\|\overline{\sS(W)}\|<1$.
Thus $\sS(W)$ is reduced, and $\sS$ preserves $\XX_\Delta^0$.

Suppose $V,W\in\XX_\Delta^0$ and $\sS(V)=W$. Put $M=\lfloor V \rfloor$.
Taking conjugates in $V=M+1/W$ and using the reducedness of $V$ gives
$M=-\lfloor 1/\overline{W}\rfloor$ and $V=-\lfloor 1/\overline{W}\rfloor+1/W$; 
so, $V$ is uniquely determined by $W$. 
Hence $\sS:\XX_\Delta^0\to\XX_\Delta^0$ is injective. 
Since $\XX_\Delta^0$ is finite by \eqref{finite-reduced}, $\sS$ is a permutation of $\XX_\Delta^0$, and
every orbit of $\sS$ on $\XX_\Delta^0$ is purely periodic. 

Let $W\in\XX_\Delta$. We denote by $W_r$ and $W_r^N$ the $r$-th standard and negative complete quotients of $W$. 
A direct induction gives $W_r=(-1)^{r-1}W_r^N$ for $r\ge1$.
The proof of Proposition \ref{app-real-correspondence} shows that $W_r^N$ is reduced for some $r$; so is $W_r$. 
Then the preceding argument gives eventual periodicity of the standard continued fraction of $W$.
\end{proof}

\begin{prop}\label{last-remark-thm-1.4}
For each class in $\XX_\Delta^0/\sim$, one may choose a reduced
representative $W=(B+\sqrt{\Delta})/(2A)$ satisfying $\sgn\overline W=1$.  For such a
representative, $\sgn(AW)=1$. 
\end{prop}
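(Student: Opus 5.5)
Given a class in $\XX_\Delta^0/\sim$, I will start from any reduced representative $W=(B+\sqrt\Delta)/(2A)$ of type $(A,B,C)$. I will then rescale it by a constant, $\lambda W$ with $\lambda\in\FF_q^\times$. Since $\lambda W=(\lambda^{-1}B+\lambda^{-1}\sqrt\Delta)/(2\lambda^{-1}\cdot\lambda^{-1}A)$ is not literally of the displayed form, I will instead use the matrix $\begin{pmatrix}\lambda&0\\0&1\end{pmatrix}\in\mathrm{GL}_2(\cA)$. Then $\lambda W\sim W$, and
$$\lambda W=\frac{B+\sqrt\Delta}{2\lambda^{-1}A},$$
which is of type $(\lambda^{-1}A,B,\lambda C)$. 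This type is still primitive, with the same $B$ and with $B^2-4(\lambda^{-1}A)(\lambda C)=\Delta$, so $\lambda W\in\XX_\Delta$. Moreover $\|\lambda W\|=\|W\|$ and $\|\overline{\lambda W}\|=\|\overline W\|$, so $\lambda W$ is reduced. Since $\overline{\lambda W}=\lambda\overline W$, we have $\sgn\overline{\lambda W}=\lambda\,\sgn\overline W$. Choosing $\lambda=(\sgn\overline W)^{-1}$, which makes sense because $\overline W\neq0$ as $W\notin k$, gives a reduced representative in the same wide class with $\sgn\overline{\lambda W}=1$. This proves the first assertion.

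For the second assertion, take a reduced $W=(B+\sqrt\Delta)/(2A)$ with $\sgn\overline W=1$. Multiplying $W$ by its conjugate gives
$$W\overline W=\frac{B^2-\Delta}{4A^2}=\frac{C}{A},$$
so $AW=C/\overline W$, and hence $\sgn(AW)=\sgn C/\sgn\overline W=\sgn C$. It therefore suffices to identify $\sgn C$ and $\sgn A$.

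The reducedness conditions give the needed sign information. We have $\overline W=(B-\sqrt\Delta)/(2A)$, hence $\sgn\overline W=\sgn(B-\sqrt\Delta)/\sgn(2A)$, which yields $\sgn(B-\sqrt\Delta)=\sgn(2A)$. This is consistent with the quantity $\sigma_W$ used earlier. Combining it with $W\overline W=C/A$ gives $\sgn(AW)=\sgn(A)\sgn(B+\sqrt\Delta)/\sgn(2A)=\sgn(B+\sqrt\Delta)/2$.

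The key step is then to show that $\sgn(B+\sqrt\Delta)=2$. Reducedness gives $\|\overline W\|<1<\|W\|$, so $\|B-\sqrt\Delta\|<\|2A\|<\|B+\sqrt\Delta\|$. Since $\deg A<\deg\Delta/2$ by \eqref{finite-reduced}, it follows that $\deg(B-\sqrt\Delta)<\deg\Delta/2$. Writing $B+\sqrt\Delta=(B-\sqrt\Delta)+2\sqrt\Delta$ with $\sgn\sqrt\Delta=1$, the leading term of $B+\sqrt\Delta$ is the leading term of $2\sqrt\Delta$, because the other summand has strictly smaller degree. Hence $\sgn(B+\sqrt\Delta)=2$, and therefore $\sgn(AW)=1$.

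The only delicate point I expect is this sign and degree bookkeeping: ensuring $\deg(B-\sqrt\Delta)<\deg\sqrt\Delta$. This follows directly from $\deg A<\deg\Delta/2$ in \eqref{finite-reduced}, or alternatively from $\|B-\sqrt\Delta\|<\|2A\|$ together with $\|2A\|\le\|\sqrt\Delta\|$.
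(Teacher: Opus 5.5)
Your proof is correct and follows essentially the paper's argument. You rescale by $\lambda=(\sgn\overline W)^{-1}$ via $\begin{pmatrix}\lambda&0\\0&1\end{pmatrix}\in\mathrm{GL}_2(\cA)$, and then use reducedness to see that $B+\sqrt\Delta$ has leading coefficient $2$, so $\sgn(AW)=\sgn\bigl((B+\sqrt\Delta)/2\bigr)=1$. Two steps are heavier than needed: the detour through $W\overline W=C/A$ is unnecessary since $AW=(B+\sqrt\Delta)/2$ directly, and the paper avoids citing \eqref{finite-reduced} by deducing $\deg B=\deg\sqrt\Delta$ and $\sgn B=1$ straight from $\deg(B-\sqrt\Delta)<\deg(B+\sqrt\Delta)$.
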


\begin{proof}
Multiplying $W\in\XX_\Delta^0$ by $\lambda\in\FF_q^\times$ preserves
reducedness and is the action of
$\begin{pmatrix}\lambda&0\\0&1\end{pmatrix}\in\mathrm{GL}_2(\cA)$.  
Therefore we may assume that
$\sgn\overline W=1$.  Since $W$ is reduced,
$\deg(B-\sqrt\Delta)<\deg(B+\sqrt\Delta)$, and hence
$\deg B=\deg\sqrt\Delta$ and $\sgn B=\sgn\sqrt\Delta=1$. 
Thus
$\sgn(AW)=\sgn\left(\frac{B+\sqrt\Delta}{2}\right)=1$.
\end{proof}

\section{Genus characters}\label{app:genus-characters}
We now give the form-theoretic construction of the genus character. 
Recall that for  $M\in\cA\setminus\{0\}$ and $N\in\cA$,
$$J_M(N):=\left[\frac{N}{M}\right].$$
For a primitive quadratic form $f=[A,B,C]$ of discriminant $\Delta\in\cA\setminus\cA^2$ and a prime $P\mid\Delta$, define
\begin{equation}\label{local-genus-form}
J^{(P)}(f):=\begin{cases}
J_P(A),&P\nmid A,\\
J_P(C),&P\mid A.\end{cases}
\end{equation}
If $P\mid A$, then $P\mid B$ and primitivity gives $P\nmid C$;
hence $J^{(P)}(f)\in\{\pm1\}$ for every $P\mid\Delta$.

\begin{lem}\label{represented-value-local}
Let $m=f(x,y)\in\cA$ with $x,y\in\cA$ and $(m,P)=1$. Then $J^{(P)}(f)=J_P(m)$.
\end{lem}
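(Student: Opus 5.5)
The natural route is to compare $m=f(x,y)$ with the first or last coefficient of $f$ modulo $P$. The tool is the discriminant identity
\[
4A\,f(x,y)=(2Ax+By)^2-\Delta y^2, \qquad 4C\,f(x,y)=(2Cy+Bx)^2-\Delta x^2 .
\]
I will split into the two cases of the definition \eqref{local-genus-form}.

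\emph{Case $P\nmid A$.} Since $P\mid\Delta$, the first identity gives $4Am\equiv(2Ax+By)^2\pmod P$. As $P\nmid m$ and $P\nmid 4A$, the element $2Ax+By$ is nonzero modulo $P$. Proposition \ref{Rit-9.1.3} (1),(2) then gives
\[
J_P(4A)J_P(m)=J_P\bigl((2Ax+By)^2\bigr)=1 .
\]
Hence $J_P(m)=J_P(A)^{-1}=J_P(A)$, using $J_P(4)=1$ and that $J_P(A)=\pm1$.

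\emph{Case $P\mid A$.} We noted after \eqref{local-genus-form} that then $P\mid B$ (since $B^2=\Delta+4AC$) and $P\nmid C$ by primitivity. Applying the same argument to the second identity yields $J_P(m)=J_P(C)$. Alternatively, reduce directly: $m\equiv Cy^2\pmod P$, and $P\nmid m$ forces $P\nmid y$.

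The only point needing care is the nonvanishing modulo $P$ of the squared term, and this is immediate from $(m,P)=1$. So there is no real obstacle: the proof is a short congruence computation, with the characteristic $\neq2$ guaranteeing that $2$ and $4$ are units.
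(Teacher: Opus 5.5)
Your proof is correct and is essentially the paper's argument: for $P\nmid A$ the paper uses $(2Ax+By)^2\equiv 4Af(x,y)\not\equiv 0\pmod P$, and for $P\mid A$ it applies the same reasoning to $(Bx+2Cy)^2\equiv 4Cf(x,y)\pmod P$, exactly as you do. Your alternative in the second case, the direct reduction $m\equiv Cy^2\pmod P$, is an equally valid shortcut.
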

\begin{proof}
Let $f=[A,B,C]$. If $P\nmid A$, then $(2Ax+By)^2\equiv4Af(x,y)\not\equiv0\pmod P$, and  hence $J_P(A)=J_P(m)$. 
If $P\mid A$, apply the same argument to
$(Bx+2Cy)^2\equiv4Cf(x,y)\pmod P$.
\end{proof}

Let $D_1,D_2\in\cA$ be square-free, let $F_0\in\cA^{(1)}$, 
and assume $\Delta=D_1D_2F_0^2\in\cA\setminus\cA^2$. Define
$$\chi_{D_1,D_2}^{(\Delta)}(f):=\prod_{P\mid D_1}J^{(P)}(f).$$
It follows from Lemma \ref{represented-value-local} that 
if $m$ is represented by $f$ and $(m,D_1)=1$, 
\begin{equation}\label{signed-genus-reciprocal-value}
\chi_{D_1,D_2}^{(\Delta)}(f)=J_{D_1}(m)=\left[\frac{m}{D_1}\right].
\end{equation}

\begin{prop}\label{genus-character-homomorphism}
The function $f \mapsto \chi_{D_1,D_2}^{(\Delta)}(f)$ on the set of all primitive quadratic forms of discriminant $\Delta$ is invariant under $\mathrm{SL}_2(\cA)$-equivalence. 
Furthermore, via the bijection between $\mathcal{F}_\Delta^+$ and $C^+(\Delta)$ 
in Proposition \ref{form-ideal-correspondence-prop}, 
it induces a character
$$\chi_{D_1,D_2}^{(\Delta)} : C^+(\Delta)\rightarrow\{\pm1\}.$$
\end{prop}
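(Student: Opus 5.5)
We need to prove Proposition \ref{genus-character-homomorphism}: $\chi_{D_1,D_2}^{(\Delta)}$ is invariant under $\mathrm{SL}_2(\cA)$, and it induces a character (a homomorphism) on $C^+(\Delta)$.

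For invariance, it suffices to show that each local factor $J^{(P)}(f)$ with $P\mid D_1$ (so $P\mid\Delta$) depends only on the class of $f$. Lemma \ref{represented-value-local} says that $J^{(P)}(f)=J_P(m)$ for any value $m=f(x,y)$ prime to $P$. If $f'=\fM\cdot f$ with $\fM\in\mathrm{SL}_2(\cA)$, then $f'(x,y)=(\det\fM)^{-1}f(x',y')$ for suitable $x',y'$. Hence the values of $f'$ are the values of $f$ multiplied by the constant $(\det\fM)^{-1}$. Since $\chisgn(\det\fM)=1$, the supplementary law (Proposition \ref{Rit-9.1.3}(4)) gives $J_P(\det\fM^{-1})=\chisgn(\det\fM)^{\deg P}=1$. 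Lemma \ref{qf-coefficient-prime} guarantees that $f$ represents some $m$ prime to $P$; then $f'$ represents $(\det\fM)^{-1}m$, also prime to $P$. Therefore $J^{(P)}(f')=J_P(m)=J^{(P)}(f)$, and taking the product over $P\mid D_1$ gives invariance. By Proposition \ref{form-ideal-correspondence-prop}, $\chi$ is then a well-defined function on $C^+(\Delta)$, transported through $\Theta^+$.

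For multiplicativity, I would use Proposition \ref{prime-to-delta-representative}. Given two narrow classes, choose oriented bases with $f_{w_i}=[A_i,B,C_i]$, where $(A_1A_2,\Delta)=1$. The product class is then represented by $f_3=[A_1A_2,B,C_3]$. For each $P\mid D_1$ we have $P\mid\Delta$, so $P\nmid A_1A_2$. By definition \eqref{local-genus-form}, $J^{(P)}(f_i)=J_P(A_i)$ and $J^{(P)}(f_3)=J_P(A_1A_2)$. Multiplicativity of the Jacobi symbol (Proposition \ref{Rit-9.1.3}(1)) gives $J^{(P)}(f_3)=J^{(P)}(f_1)J^{(P)}(f_2)$, and taking the product over $P\mid D_1$ yields $\chi(\fa_1\fa_2)=\chi(\fa_1)\chi(\fa_2)$. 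The values lie in $\{\pm1\}$, so $\chi$ is a character.

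The main subtle point is the case where $\mathcal F_\Delta^+$ is the restricted set, that is, when no element of negative norm sign exists. In that case the classes consist of forms with $\chisgn(A)=1$, and invariance is only needed under $\mathrm{SL}_2$, which the argument above already covers. One should also check that the forms used in Proposition \ref{prime-to-delta-representative} come from oriented bases, so that they correspond to the right narrow classes; that proposition already asserts this. Finally, the existence of a value prime to $P$ is exactly Lemma \ref{qf-coefficient-prime} applied with $H=P$.
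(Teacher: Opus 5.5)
Your proof is correct and is essentially the paper's argument. Invariance comes from a represented value coprime to the relevant primes, with the unit $\det\fM$ contributing $\chisgn(\det\fM)^{\deg}=1$; you do this prime-by-prime via Lemma \ref{represented-value-local}, while the paper treats the whole product $\chi_{D_1,D_2}^{(\Delta)}$ at once via \eqref{signed-genus-reciprocal-value}. Multiplicativity comes from the common-middle-coefficient representatives of Proposition \ref{prime-to-delta-representative}.
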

\begin{proof}
Let $f'=\fM\cdot f$ with $\fM\in\mathrm{SL}_2(\cA)$.  
By Lemma \ref{qf-coefficient-prime}, the primitive form $f'$ represents some $m\in\cA$ 
such that $(m,D_1)=1$.  
Then $(\det{\fM})m$ is represented by $f$, and 
$J_{D_1}(\det{\fM})=\chisgn(\det{\fM})^{\deg{D_1}}=1$.
Therefore, by \eqref{signed-genus-reciprocal-value},
$$\chi_{D_1,D_2}^{(\Delta)}(f')=J_{D_1}(m)
=J_{D_1}((\det{\fM})m)=\chi_{D_1,D_2}^{(\Delta)}(f).$$
Thus $\chi_{D_1,D_2}^{(\Delta)}$ is invariant under $\mathrm{SL}_2(\cA)$-equivalence and
induces a well-defined function on $\mathcal F_\Delta^+$ and hence,
via Proposition \ref{form-ideal-correspondence-prop}, on $C^+(\Delta)$.

Let $\fa_1 P^+(\Delta)$, $\fa_2 P^+(\Delta)\in C^+(\Delta)$. By Proposition \ref{prime-to-delta-representative}, 
we may choose representatives $\mathfrak a_i$ and oriented bases $w_i$ ($i=1$, $2$)
such that $f_{w_i}=[A_i,B,C_i]$ with $(A_1A_2,\Delta)=1$ and $(A_1,A_2)=1$; 
$\fa_1\fa_2$ has an oriented basis $w_3$ satisfying $f_{w_3}=[A_1A_2,B,C_3]$. 
By \eqref{signed-genus-reciprocal-value}, we have 
$$\chi_{D_1,D_2}^{(\Delta)}(\fa_1 P^+(\Delta))
\chi_{D_1,D_2}^{(\Delta)}(\fa_2 P^+(\Delta))
=J_{D_1}(A_1)J_{D_1}(A_2)
=J_{D_1}(A_1A_2)
=\chi_{D_1,D_2}^{(\Delta)}(\fa_1 \fa_2 P^+(\Delta)).$$
Hence $\chi_{D_1,D_2}^{(\Delta)}$ is a group homomorphism.
\end{proof}

We call $\chi_{D_1,D_2}^{(\Delta)}:C^+(\Delta)\to\{\pm1\}$ the genus character. 
We also regard this as a character on $\mathcal F_\Delta^+$
via Proposition \ref{form-ideal-correspondence-prop}, 
and when $\Delta$ is monic, nonsquare, and of even degree, as a character on
$\XX_\Delta/\sim_+$ via Proposition \ref{app-real-correspondence}.

\begin{prop}\label{symmetry-genus-characters}
Let $f=[A,B,C]$ be a primitive quadratic form of discriminant $\Delta$.  Then
\begin{equation}\label{general-symmetry-obstruction}
\chi_{D_1,D_2}^{(\Delta)}(f)\chi_{D_2,D_1}^{(\Delta)}(f)
=\chisgn(-1)^{\deg\Delta\cdot\deg A} \chisgn(\Delta)^{\deg A} \chisgn(A)^{\deg\Delta}.
\end{equation}
In particular, if $\Delta$ is monic of even degree, then
$\chi_{D_1,D_2}^{(\Delta)}=\chi_{D_2,D_1}^{(\Delta)}$.
\end{prop}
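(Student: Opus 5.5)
The plan is to compute both sides of \eqref{general-symmetry-obstruction} on a single convenient representative and then compare them using quadratic reciprocity.

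First, reduce to a form whose first coefficient is prime to $\Delta$. By Proposition \ref{genus-character-homomorphism} both $\chi_{D_1,D_2}^{(\Delta)}$ and $\chi_{D_2,D_1}^{(\Delta)}$ are $\mathrm{SL}_2(\cA)$-invariant. The right-hand side is also invariant. To see this, note that by Lemma \ref{qf-coefficient-prime} we may replace $f$ by an equivalent $[A',B',C']$ with $(A',\Delta)=1$, and $A'=f(x,y)$ up to the determinant factor. So we must check that $\deg A$ and $\chisgn(A)$ enter only through a quantity depending on the class. I would avoid checking this directly and instead prove the identity for each primitive form with $(A,\Delta)=1$ separately. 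Then I would treat a general $f$ by a direct computation.

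\textbf{The case $(A,\Delta)=1$.} By \eqref{signed-genus-reciprocal-value}, applied with $m=A=f(1,0)$ (coprime to $D_1$ and $D_2$),
$$\chi_{D_1,D_2}^{(\Delta)}(f)\chi_{D_2,D_1}^{(\Delta)}(f)=J_{D_1}(A)J_{D_2}(A)=J_{D_1D_2}(A)=\left[\frac{A}{D_1D_2}\right].$$
Since $B^2-4AC=\Delta$, we have $\Delta\equiv B^2\pmod A$, and with $(A,\Delta)=1$ this gives $\left[\frac{\Delta}{A}\right]=1$. Also $\Delta=D_1D_2F_0^2$ and $(F_0,A)=1$, so $\left[\frac{D_1D_2}{A}\right]=1$. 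Proposition \ref{Rit-9.1.3} \eqref{Rit-9.1.3-3} then gives
$$\left[\frac{A}{D_1D_2}\right]=\chisgn(-1)^{\deg A\deg(D_1D_2)}\chisgn(A)^{\deg (D_1D_2)}\chisgn(D_1D_2)^{\deg A}.$$
Finally, replace $D_1D_2$ by $\Delta$, using $\deg\Delta\equiv\deg D_1D_2\pmod 2$ and $\chisgn(\Delta)=\chisgn(D_1D_2)$ (because $F_0$ is monic). This is exactly the claimed formula.

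\textbf{The general case.} I would compare $f$ with an equivalent form $f'=[A',B',C']$ satisfying $(A',\Delta)=1$. Such an $f'$ exists by Lemma \ref{qf-coefficient-prime}, with $A'=f(x,1)$ after an $\mathrm{SL}_2$ move of determinant $1$. The left-hand side is unchanged. It then remains to show that the right-hand side agrees for $A$ and $A'$. This step is the main obstacle, since the right-hand side is not obviously a class invariant.

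My first attempt would be as follows. If $P\nmid A$ for every $P\mid\Delta$ we are already done. Otherwise, I would compute directly with the definition \eqref{local-genus-form}. Using Lemma \ref{represented-value-local}, for each prime $P\mid\Delta$ we have $J^{(P)}(f)=J_P(m)$ for any represented $m$ prime to $P$. So the left-hand side equals $\left[\frac{m}{D_1D_2}\right]$ for any single value $m=f(x,y)$ prime to $\Delta$, and by the case above this equals the right-hand side evaluated at $m$.

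It then suffices to show that $\sigma(m):=\chisgn(-1)^{\deg\Delta\deg m}\chisgn(\Delta)^{\deg m}\chisgn(m)^{\deg\Delta}$ takes the same value on all nonzero values $m$ of $f$, including $m=A$. Write $4Am=\cN(2Ax+(B+\sqrt\Delta)y)$. The map $z\mapsto\sigma(\cN(z))$ is trivial. Indeed, for $\deg\Delta$ even, $\sigma(\cN(z))=\chisgn(\cN(z))\cdot(\text{sign data})$, which the leading-term analysis in the proof of Proposition \ref{prop-negative-norm-element} controls. For $\deg\Delta$ odd, $\cN(z)=a^2-b^2\Delta$ has leading term either a square or $-\sgn(\Delta)b_0^2$ times an odd power, and one checks $\sigma=1$ in both cases. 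Since $\sigma$ is multiplicative, this gives $\sigma(4A)\sigma(m)=1$, that is $\sigma(m)=\sigma(A)$, which finishes the proof. The special case follows by setting $\deg\Delta$ even and $\chisgn(\Delta)=1$, which makes the right-hand side $1$.
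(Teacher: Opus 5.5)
Your proof is correct in outline but follows a different route from the paper, and one sub-step is justified incorrectly.

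\textbf{Comparison with the paper.} The paper never separates the case $(A,\Delta)=1$. It takes one well-chosen represented value $m=f(x,1)$ with $x=x_0+H\Delta$, where $(f(x_0,1),\Delta)=1$ and $\deg H$ is large. Then $Ax^2$ dominates, so $\deg m\equiv\deg A\pmod 2$ and $\chisgn(m)=\chisgn(A)$ hold by inspection. The left-hand side is $J_\Delta(m)$, $J_m(\Delta)=1$ follows from $4Am=(2Ax+B)^2-\Delta$, and reciprocity finishes.

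You instead allow any value $m$ prime to $\Delta$ and show that your $\sigma$ is constant on all nonzero values of $f$, via $4Am=\cN(z)$ and the triviality of $\sigma\circ\cN$. In effect $\sigma(n)$ is the Hilbert symbol $(n,\Delta)_\infty$, which is trivial on norms. This explains conceptually why the right-hand side of \eqref{general-symmetry-obstruction} is an $\mathrm{SL}_2(\cA)$-class invariant. The price is a leading-coefficient case analysis that the paper's choice of $m$ avoids. Your phrase ``by the case above this equals $\sigma(m)$'' really needs $J_m(\Delta)=1$; taking $m=f(x,1)$ gives this from $4Am=(2Ax+B)^2-\Delta$, as in the paper.

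\textbf{The incorrect sub-step.} The even-degree case of ``$\sigma\circ\cN$ is trivial'' is wrong as written. If $\deg\Delta$ is even, $\chisgn(\cN(z))$ enters $\sigma(\cN(z))$ with exponent $\deg\Delta$ and drops out, so $\sigma(\cN(z))=\chisgn(\Delta)^{\deg\cN(z)}$. Moreover, $\chisgn(\cN(z))$ is not controlled in this case at all: Proposition \ref{prop-negative-norm-element} produces $u$ with $\chisgn(\cN(u))=-1$ whenever $\deg\Delta$ is even. So pointing to that leading-term analysis points the wrong way.

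What you need instead is a parity statement. If $\deg\Delta$ is even and $\chisgn(\Delta)=-1$, then $\deg\cN(z)$ is even for every nonzero $z=a+b\sqrt\Delta\in\cO_\Delta$. Indeed, $a^2$ and $b^2\Delta$ both have even degree. When these degrees coincide, the leading coefficient $(\sgn a)^2-(\sgn b)^2\sgn\Delta$ cannot vanish, because $\sgn\Delta$ is a nonsquare. If $\chisgn(\Delta)=1$ there is nothing to prove. With this replacement, and your odd-degree check, which is correct, the argument is complete.
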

\begin{proof}
By Lemma \ref{qf-coefficient-prime}, we may choose $x_0\in\cA$ such that $(f(x_0,1),\Delta)=1$. 
For $H\in\cA$, 
put $x=x_0+H\Delta$ and $m=f(x,1)$.  
For $H$ of sufficiently large degree, the term $Ax^2$ dominates $Bx+C$; hence
\begin{equation*}\label{replace-order-eq}
\deg m\equiv\deg A\pmod2,\qquad \chisgn(m)=\chisgn(A).
\end{equation*}
Since $(m,\Delta)=1$, by \eqref{signed-genus-reciprocal-value}, the left-hand side of
\eqref{general-symmetry-obstruction} is $J_\Delta(m)$. 
Since $4Am=(2Ax+B)^2-\Delta$, we have $J_m(\Delta)=1$; 
by the quadratic reciprocity law, 
$$J_\Delta(m)=\chisgn(-1)^{\deg\Delta \cdot \deg m}
\chisgn(\Delta)^{\deg m} \chisgn(m)^{\deg \Delta}.$$
Therefore the identity \eqref{general-symmetry-obstruction} follows.
\end{proof}

\begin{prop}\label{change-Rit-11.3.3} 
Let $\Delta=D_1D_2F_0^2$. 
Assume that either $\deg \Delta $ is even, or $\deg \Delta $ is odd and $\chisgn(-\Delta)=-1$. 
Let $u\in\cO_\Delta$ satisfy $\chisgn(\cN(u))=-1$ 
as in Proposition \ref{prop-negative-norm-element}. 
Then for every $\fa P^+(\Delta)\in C^+(\Delta)$, 
$$\chi_{D_1,D_2}^{(\Delta)}(u\fa P^+(\Delta))
=(-1)^{\deg{D_1}}\chi_{D_1,D_2}^{(\Delta)}(\fa P^+(\Delta)).$$
Furthermore, if $\deg D_1$ is even, then $\chi_{D_1,D_2}^{(\Delta)}$ descends to $C(\Delta)$. If $\deg D_1$ is odd, then $h^+(\Delta)=2h(\Delta)$ and $\chi_{D_1,D_2}^{(\Delta)}$ does not descend to $C(\Delta)$.
\end{prop}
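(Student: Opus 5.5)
We have to prove Proposition \ref{change-Rit-11.3.3}. The plan is to compute the genus character on $u\fa$ by evaluating it on a value represented by the associated form, and then to read off the two consequences from the group structure.

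\textbf{Step 1: the sign change.} Let $w$ be an oriented basis of $\fa$. We may take $w=Z(2A,B+\sqrt\Delta)$ with $f_w=[A,B,C]$, and by Lemma \ref{qf-coefficient-prime} we may arrange $(A,\Delta)=1$. Since $\Omega(uw)=\cN(u)\Omega(w)$ by \eqref{form-action-properties}, the basis $uw$ of $u\fa$ is not oriented. Multiply its second vector by $g$: the basis $w'=(uw_1,g\,uw_2)$ is oriented, because its $\Omega$ is multiplied by $g$ and $\chisgn(g)=-1$.

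Write $f_{uw}=f_w$ for the form attached to $uw$. Then
$$f_{w'}(X,Y)=\frac{\cN(Xuw_1+gYuw_2)}{g\,\cN(u)\Omega(w)}=g^{-1}f_w(X,gY).$$
Its $X^2$-coefficient is $g^{-1}A$, which is prime to $D_1$. By \eqref{signed-genus-reciprocal-value},
$$\chi_{D_1,D_2}^{(\Delta)}(u\fa P^+(\Delta))=J_{D_1}(g^{-1}A)=J_{D_1}(g)\,J_{D_1}(A).$$
The supplementary law, Proposition \ref{Rit-9.1.3}(4), gives $J_{D_1}(g)=\chisgn(g)^{\deg D_1}=(-1)^{\deg D_1}$. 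This proves the displayed identity. (It is cleaner to use the represented value $A=f_w(1,0)$ than the local definition \eqref{local-genus-form} prime by prime.)

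\textbf{Step 2: descent when $\deg D_1$ is even.} The kernel of $C^+(\Delta)\to C(\Delta)$ is generated by the class of $u\cO_\Delta$. Since $\chi_{D_1,D_2}^{(\Delta)}$ is a homomorphism by Proposition \ref{genus-character-homomorphism}, Step 1 applied to $\fa=\cO_\Delta$ gives $\chi(u\cO_\Delta)=(-1)^{\deg D_1}$. If $\deg D_1$ is even, $\chi$ is trivial on the kernel and therefore descends to $C(\Delta)$.

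\textbf{Step 3: $\deg D_1$ odd.} Then $\chi(u\cO_\Delta)=-1$, so $u\cO_\Delta$ is not in $P^+(\Delta)$, since $\chi$ is trivial on $P^+(\Delta)$. Hence $u\cO_\Delta$ is a nontrivial element of the kernel $P(\Delta)/P^+(\Delta)$, which has order at most $2$. So the kernel has order exactly $2$, $h^+(\Delta)=2h(\Delta)$, and $\chi$ is nontrivial on the kernel, so it does not descend to $C(\Delta)$.

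\textbf{Main obstacle.} The delicate point is the orientation bookkeeping in Step 1. The natural basis $uw$ is not oriented, and the reorientation must be chosen so that the resulting form still represents a value prime to $D_1$ whose sign character can be tracked. One must also check that the supplementary law is applied with the correct argument ($g$ in the numerator of the Jacobi symbol $[\,\cdot\,/D_1]$), and that it holds for non-monic $D_1$.
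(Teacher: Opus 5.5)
Your proposal is correct and follows essentially the same route as the paper. The only difference is cosmetic: the paper reorients the basis of $u\fa$ by scaling the first vector, $\widetilde w=(guw_1,uw_2)$, which gives the form $[gA,B,g^{-1}C]$, while you scale the second and get $[g^{-1}A,B,gC]$. In both cases the supplementary law yields the factor $(-1)^{\deg D_1}$, and the subsequent argument on the kernel $\{P^+(\Delta),u\cO_\Delta P^+(\Delta)\}$ is the same.
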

\begin{proof}
Applying Lemma \ref{qf-coefficient-prime} to $\Theta^+(\fa P^+(\Delta))$ with $H=D_1$, 
one may choose a representative $\fa$ and an oriented basis $w=(w_1,w_2)$ of $\fa$ 
such that 
$$f_{w}=[A,B,C],\qquad (A,D_1)=1.$$
Put
$\widetilde{w}=(guw_1,uw_2)$ and $\fM=\begin{pmatrix}1&0\\0&g\end{pmatrix}$.
The basis transformation formulas \eqref{form-action-properties} give
$$\Omega(\widetilde{w})=g\cN(u)\Omega(w),\qquad 
f_{\widetilde{w}}=\fM\cdot f_w=[gA,B,g^{-1}C].$$ 
Since $\chisgn(g)=\chisgn(\cN(u))=-1$, $\widetilde{w}$ is an oriented basis of $u\fa$. 
Hence by \eqref{signed-genus-reciprocal-value},
$$\chi_{D_1,D_2}^{(\Delta)}(u\fa P^+(\Delta))
=\left[\frac{gA}{D_1}\right]
=(-1)^{\deg D_1}\left[\frac{A}{D_1}\right]
=(-1)^{\deg D_1}\chi_{D_1,D_2}^{(\Delta)}(\fa P^+(\Delta)).$$
Recall that
$\ker\left(C^+(\Delta) \to C(\Delta)\right)=P(\Delta)/P^+(\Delta)
=\{P^+(\Delta),\,u\cO_\Delta P^+(\Delta)\}$, 
where the two narrow classes may coincide. 
Taking $\fa=\cO_\Delta$ gives
$$\chi_{D_1,D_2}^{(\Delta)}(\cO_\Delta P^+(\Delta))=1,\qquad
\chi_{D_1,D_2}^{(\Delta)}(u\cO_\Delta P^+(\Delta))=(-1)^{\deg D_1}.$$
If $\deg D_1$ is even, the character is trivial on this kernel
and then descends to $C(\Delta)$.
If $\deg D_1$ is odd, it takes the value $-1$ on $u\cO_\Delta P^+(\Delta)$; 
thus the kernel has order $2$, $h^+(\Delta)=2h(\Delta)$, and 
the character does not descend to $C(\Delta)$.
\end{proof}

\end{document}